\newtheoremstyle{plainNoItalics}{}{}{\normalfont}{}{\bfseries}{.}{ }{}
\theoremstyle{plain}
\newtheorem{thm}{Theorem}[section]
\newtheorem{lem}[thm]{Lemma}
\newtheorem{defn}[thm]{Definition}
\newtheorem{rem}[thm]{Remark}
\newtheorem{prop}[thm]{Proposition}
\newtheorem{exa}[thm]{Example}
\newcommand{\beq}{\begin{equation}}
\newcommand{\eeq}{\end{equation}}
\newcommand{\beqa}{\begin{eqnarray}}
\newcommand{\eeqa}{\end{eqnarray}}
\newcommand{\bit}{\begin{itemize}}
\newcommand{\eit}{\end{itemize}}
\newcommand{\bedef}{\begin{defn}}
\newcommand{\edefn}{\end{defn}}
\newcommand{\bpro}{\begin{prop}}
\newcommand{\epro}{\end{prop}}
\newcommand{\mfg}{{\mathbf{g}}}
\newcommand{\mfk}{{\mathbf{k}}}
\newcommand{\mfn}{{\mathbf{n}}}
\newcommand{\mfq}{{\mathbf{q}}}
\newcommand{\mfu}{{\mathbf{u}}}
\newcommand{\mfv}{{\mathbf{v}}}
\newcommand{\mfw}{{\mathbf{w}}}
\newcommand{\mfx}{{\mathbf{x}}}
\newcommand{\mfy}{{\mathbf{y}}}
\newcommand{\mfC}{{\mathbf{C}}}
\newcommand{\mfF}{{\mathbf{F}}}
\newcommand{\mfG}{{\mathbf{G}}}
\newcommand{\mfI}{{\mathbf{I}}}
\newcommand{\mfL}{{\mathbf{L}}}
\newcommand{\mfP}{{\mathbf{P}}}
\newcommand{\mfS}{{\mathbf{S}}}
\newcommand{\mfU}{{\mathbf{U}}}
\newcommand{\mfV}{{\mathbf{V}}}
\newcommand{\mfW}{{\mathbf{W}}}
\newcommand{\dd}{{\text{d}}}
\newcommand{\bdp}[1]{\left(#1\right)}
\newcommand{\bdc}[1]{\left\{#1\right\}}
\newcommand{\restr}[2]{\left.\kern-\nulldelimiterspace #1 \vphantom{\big|} \right|_{#2}}
\newcommand{\av}[1]{\{\!\!\{#1\}\!\!\}}
\newcommand{\jump}[1]{[\![#1]\!]}
\newcommand{\pdrv}[2]{\dfrac{\partial{#1}}{\partial{#2}}}
\newcommand{\pdrvn}[3]{\dfrac{\partial^{#3}{#1}}{\partial{#2}^{#3}}}
\newcommand{\mD}{{\mathcal D}}
\newcommand{\mE}{{\mathcal E}}
\newcommand{\mT}{{\mathcal T}}
\newcommand{\bv}{{\bf v}}
\newcommand{\bu}{{\bf u}}
\title[High order WB TEC LDG scheme for compressible SG Euler equations]{High order well-balanced and total-energy-conserving local discontinuous Galerkin methods for compressible self-gravitating Euler equations}
\keywords{compressible self-gravitating Euler equations; hyperbolic balance laws; local discontinuous Galerkin method; well-balanced; total-energy-conserving}
\begin{document}

\maketitle
\medskip
\centerline{\scshape Liang Pan}
\medskip
{\footnotesize
	\centerline{School of Mathematical Sciences, Xiamen University}
	\centerline{Xiamen, Fujian, 361005, P.R. China}
	\centerline{Email: lightpl@stu.xmu.edu.cn}
}

\medskip
\centerline{\scshape Wei Chen}
\medskip
{\footnotesize
	\centerline{School of Mathematical Sciences, Xiamen University}
	\centerline{Xiamen, Fujian, 361005, P.R. China}
	\centerline{Email: weichenmath@stu.xmu.edu.cn}
}

\medskip
\centerline{\scshape Jianxian Qiu}
\medskip
{\footnotesize
	\centerline{School of Mathematical Sciences, Xiamen University}
	\centerline{Fujian Provincial Key Laboratory of Mathematical Modeling and High-Performance Scientific Computing}
	\centerline{Xiamen, Fujian, 361005, P.R. China}
	\centerline{Email: jxqiu@xmu.edu.cn}
}

\medskip
\centerline{\scshape Tao Xiong\footnote{Corresponding author. The work is partially supported by National Key R\&D Program of China No.
		2022YFA1004500, NSFC grant No. 92270112, NSF of Fujian Province No. 2023J02003.}}
\medskip
{\footnotesize
	\centerline{School of Mathematical Sciences, University of Science and Technology of China}
	\centerline{Hefei, Anhui, 230026, P.R. China}
	\centerline{Email: taoxiong@ustc.edu.cn}
}

\bigskip

\begin{abstract}
	In this paper, we develop a high order structure-preserving local discontinuous Galerkin (DG) scheme for the compressible self-gravitating Euler equations, which pose great challenges due to the presence of time-dependent gravitational potential. The designed scheme is well-balanced for general polytropic equilibrium state and total energy conserving for multiple spatial dimensions without an assumption of spherical symmetry. The well-balanced property is achieved by decomposing the gravitational potential into equilibrium and perturbation parts, employing a modified Harten–Lax–van Leer–contact flux and a modification of the discretization for the source term. Conservation of total energy is particularly challenging in the presence of self-gravity, especially when aiming for high order accuracy. To address this, we rewrite the energy equation into a conservative form, and carefully design an energy flux with the aid of weak formulation from the DG method to maintain conservation as well as high order accuracy. The resulting scheme can be extended to high order in time discretizations. Numerical examples for two and three dimensional problems are provided to verify the desired properties of our proposed scheme, including shock-capturing, high order accuracy, well balance, and total energy conservation.
\end{abstract}

\vspace{0.1cm}

\section{Introduction}
\label{sec1}
\setcounter{equation}{0}
\setcounter{figure}{0}
\setcounter{table}{0}

Numerical simulations of self-gravitating (SG) gas dynamics have become a crucial tool for exploring a wide range of astrophysical phenomena. The SG effect is particularly important in modeling phenomena such as Jeans instability \cite{PurelyDG_SG2021,JIANG201348,mullen2021extension}, Sedov explosions \cite{PurelyDG_SG2021,maciel2015introduction,Zhang_2022,KAPPELI2014WB}, and star formation \cite{Simon_2016}. Specifically, the compressible SG Euler equations in the $d-$dimensional case can be written as:
\begin{subequations}
	\label{S2:eq:0}
	\begin{equation}
		\frac{\partial }{\partial t}
		\begin{bmatrix}
			\rho  \\
			\rho \mathbf{u}\\
			E     \\
		\end{bmatrix}
		+
		\nabla \cdot
		\begin{bmatrix}
			\rho \mathbf{u}\\
			\rho \mathbf{u} \otimes \mfu + p \mfI_d\\
			(E + p) \mfu
		\end{bmatrix}
		= -
		\begin{bmatrix}
			0\\
			\rho \nabla \phi\\
			\rho \mfu \cdot \nabla \phi
		\end{bmatrix},
		\label{S2:eq:1}
	\end{equation}
	\begin{equation}
		\Delta\phi = 4\pi G\rho. \label{S2:eq:2}
	\end{equation}
\end{subequations}
Here $\rho$, $\mfu = (u_1, \cdots, u_d)^T$, $E$, $p$, $\phi$ are the fluid mass density, velocity, total non-gravitational energy, gas pressure, and gravitational potential, respectively. $\mfI_d$ denotes the $d-$dimensional identity matrix. The total non-gravitational energy is given by
\begin{equation}
	E = \rho e + \frac{1}{2}\rho {\Vert \mfu \Vert}^2,
	\label{S2:eq:3}
\end{equation}
where $e = \mathcal{E}(\rho,p)$ is the specific internal energy related to $\rho,p$ from equation of state (EOS), and  $\Vert \cdot \Vert$ denotes the typical Euclidean vector norm. For the ideal gas law, the pressure $p$ is given by $p = (\gamma - 1)\rho e$, where $\gamma$ is the specific heat ratio. $G$ in equation 
\eqref{S2:eq:2} is the gravitational constant.


The major difference between the SG Euler equations and the Euler equations with static gravitational potential is the time-dependent gravitational source term, i.e. the gravitational potential is modeled by the Poisson equation \eqref{S2:eq:2}, which complicates the formulation and numerical implementation. One approach is to rewrite the Poisson equation as a first-order hyperbolic system with a relaxation time, therefore the solver designed for Euler equations can also be applied to the Poisson equation \cite{PurelyDG_SG2021}. This strategy reduces the computational effort compared to solving a large linear system from the Poisson equation, particularly by improving the efficiency on adaptive meshes. Another is to directly solve the Poisson equation. Fast Fourier Transform (FFT) algorithms \cite{JIANG201348} and multipole expansions \cite{2013MultipoleExpansion,1995MullerSGEuler} are widely used. However, this approach would impose strict constraints on boundary conditions. In this paper, we construct our scheme under the framework of local discontinuous Galerkin (LDG) methods, which are specifically designed to handle partial differential equations (PDEs) involving high order derivatives and are flexible for various types of boundary conditions in complicate geometries. The LDG methods have been widely used for problems involving the Poisson equation, such as Vlasov-Poisson equations \cite{Dios3dimECDGforVP,Dios2012HIGHOA,MadauleNDG_EC_VP2014,Filbet2022EC_DG_VP}, Burgers-Poisson equations \cite{Liu2015}, and Drift-Diffusion equations \cite{Liu2016DriftDiffusionIMEX,Liu2010DriftDiffusion}, etc. 

The compressible SG Euler equations belong to the family of hyperbolic conservation laws with source terms. In these applications, near-equilibrium flows often encounter, and conventional schemes may fail to capture small perturbations from the hydrostatic equilibrium states due to numerical discretization errors, unless a very refined mesh is applied, which leads to significant computational cost. In such cases, well-balanced (WB) schemes become important which satisfy the following properties: preserving the steady solutions exactly up to machine error; capturing small perturbations well even on relatively coarse meshes; preserving the accuracy of the proposed schemes. For WB schemes, it is crucial to consider a balance between the flux and source terms. In recent decades, WB schemes have drawn much attention. Many early works were proposed for shallow water equations (SWEs) over a non-flat bottom topology, and we refer the reader to \cite{Bremudez19941049,AudusseSWE2004,XING2005FdWbSwe,LEVEQUE1998346,Greenberg1996,Xing2010PpWbDgSwe,Xing2014WbDgMovingWater} for different kinds of WB reconstruction. These approaches have also been extended to compressible Euler equations with a static gravitational source term, within the framework of finite difference (FD) methods \cite{li2018FD_WB,Xing2012HighOW}, finite volume (FV) methods \cite{KAPPELI2014WB,BOTTA2004WB,SecondFVPPEulerKlingenberg,Käppeli2019HWB,REN2023FVPP,Berberich2021WBFV,2015WB2rd,Varma2019WB2nd}, discontinuous Galerkin (DG) methods \cite{LI2018WB,wu2021uniformly,DU2024WBPP,Li2015WellBalancedDG,Weijie2022WBDG}, etc. A study focusing on the comparison between WB schemes and non-WB schemes can be found in \cite{2019ComparisonWBandNonwb}. The main challenge of WB schemes for the time dependent source term arises from the reformulation of gravitational potential, where the balance between pressure gradient and the source term, i.e. $-\rho\nabla\phi = \nabla p$, cannot be directly used.


In addition to the WB property, TEC is also crucial for the simulation of the SG Euler equations. Since the gravitational force is self-generated by the gas, the gravitational potential energy can be regarded as part of the total energy. For SG Euler equations under a hydrostatic equilibrium, the gravitational energy may take negative values, so that the total energy could be much smaller than the internal energy. Numerical schemes without TEC often yield nonphysical results in certain cases \cite{JIANG201348}, particularly in the context of underresolved meshes or large time-step sizes.
TEC has also been observed to play a pivotal role in plasma models, including Vlasov–Amp\`{e}re equations \cite{CHEN2011EC_VA,ECforVA2024,CHENG2014630,LIU2023EC_SL_VA}, Vlasov–Poisson equations \cite{Dios3dimECDGforVP,Dios2012HIGHOA,MadauleNDG_EC_VP2014,Filbet2022EC_DG_VP}, and Vlasov–Maxwell equations \cite{Cheng2014EC_DG_VM,MARKIDIS2011EC_PIC_VM,YIN2023EV_Moment_VW}. In these applications, designing TEC schemes for Poisson-type equations presents greater challenges as compared to Amp\`{e}re-type equations, as there is no direct evolving equation for the external potential. In particular, the compressible SG Euler equations can be interpreted as the macroscopic limit of the gravitational Vlasov–Boltzmann–Poisson equations \cite{Brenier2000VPtoEuler,Nicolas2016BGKVPlimits,Dimarco2014APVPBtoPlasma} in the regime of vanishing dimensionless Knudsen numbers. For SG Euler equations, one popular technique for designing TEC schemes is based on the total energy conservation form, but their implementations are limited to second-order accuracy \cite{JIANG201348,mullen2021extension}. Another one is to apply summation-by-parts and the mass conservation equation to discretize the source term in the energy equation \cite{2023SPFEforEulerPoisson}, which would require careful modifications when extending to high order in time discretizations \cite{Zhang_2022}. In this paper, we adopt the first technique and will extend it to high order accuracy.

For compressible SG Euler equations, one recent work by \cite{Zhang_2022} addressed the satisfaction of both WB and TEC properties through a hydrostatic reconstruction approach applied to the flux term. Their proposed scheme was developed within the framework of DG methods, to ensure the consistency between the Euler equations and the Poisson equation. However, their scheme was constrained to the case of spherical symmetry, where the Poisson equation can be solved analytically by direct integration. In contrast, in high-dimensional spatial domains without spherical symmetry the analytical solution is not available, which makes the generalization of WB property to multiple spatial dimensions being much more complicated \cite{Zhang_2022}.

The main objective of this paper is to construct a high order DG scheme for compressible SG Euler equations, which can preserve the WB and TEC properties in high dimensional spaces. Specifically, we employ the following main techniques to achieve both WB and TEC properties as well as maintaining high order accuracy:
\begin{enumerate}
\itemsep=0pt
\item The WB property is obtained based on the idea of hydrostatic reconstruction. We decompose the gravitational potential into equilibrium and perturbation components.  Modifications are applied to the Harten-Lax-van Leer-contact (HLLC) flux and source term for the equilibrium part in order to hold the WB property. A standard treatment of the perturbation part is enough to ensure high order accuracy. Preserving the WB property would perform better in near equilibrium state.

\item The TEC property is realized based on the conservative form for the total energy. To address the loss of accuracy due to a time derivative appeared in the energy flux, we apply summation-by-parts to restore the desired order of accuracy. The resulting scheme can be easily generalized to high order temporal accuracy with multi-stage Runge-Kutta time discretizations. 

\item The high order accuracy in space is obtained by the DG framework. The DG methods can be flexibly designed with high order accuracy for general geometries in high dimensions. We also borrow the weak formulation of DG methods to restore the high order accuracy in the energy flux, which is not easy for a FV/FD method \cite{JIANG201348,mullen2021extension}. Besides, we adopt the oscillation eliminating (OE) technique introduced in \cite{2024PengOEDG} to control numerical oscillations and we reformulate the oscillation damping procedure without destroying the WB property.
\end{enumerate}  

The rest of the paper is organized as follows. In Section \ref{sec2}, we will briefly review the compressible SG Euler equations, the polytropic steady-states for the SG Euler equations, and the total-energy-conservation form. The  multi-dimensional HLLC flux, the standard LDG scheme, and the structure-preserving LDG scheme will be described in detail in Section \ref{sec3}, followed by an OE technique to control numerical oscillations. The WB and TEC properties are proven in Section \ref{sec4}. Extensive two- and three-dimensional numerical experiments are performed to validate accuracy, robustness, shock capturing ability, WB property, and TEC property in Section \ref{sec5}. A brief conclusion will be drawn in the last section.

\section{Mathmatical Model}
\label{sec2}
\setcounter{equation}{0}
\setcounter{figure}{0}
\setcounter{table}{0}

In this section, we introduce the compressible SG Euler equations, along with a discussion of their steady-state solutions and the TEC property. For the sake of easy presentation, we introduce the following notations:
\begin{equation}
	\mfU = \begin{bmatrix}
		\rho  \\
		\rho \mathbf{u}\\
		E     \\
	\end{bmatrix}, \mfF(\mfU) = \begin{bmatrix}
		\rho \mathbf{u}\\
		\rho \mathbf{u} \otimes \mfu + p \mfI_d\\
		(E + p) \mfu
	\end{bmatrix}, \mfS(\mfU, \nabla\phi) = 
	\begin{bmatrix}
		0\\
		-\rho \nabla \phi\\
		-\rho \bu \cdot \nabla \phi
	\end{bmatrix},
\end{equation}
where $\mfF$ and $\mfS$ denote the flux term and source term respectively. Then the equations \eqref{S2:eq:1} and \eqref{S2:eq:2} can be rewritten as follows:
\begin{equation}
	\mfU_t + \nabla\cdot\mfF(\mfU) = \mfS(\mfU,\nabla\phi),\quad \Delta\phi = 4\pi G \rho. \label{S2:eq:17}
\end{equation}



\subsection{Polytropic steady-states and Lane-Emden equation} \hfill \\

The compressible SG Euler equations admit the hydrostatic stationary solution. The pressure gradient force is balanced by the gravitational force:
\begin{equation}
	\rho = \rho(\mfx),\; \mfu = 0,\; \nabla p = -\rho\nabla\phi,\label{S2:eq:6}
\end{equation}
where $\mfx=(x,y,z)^T$ denotes the spatial coordinates.
Moreover, an additional Poisson equation (\ref{S2:eq:2}) has to be satisfied. A general polytropic hydrostatic equilibrium is considered in such self-gravity model and some astrophysical applications, which is characterized by:
\begin{equation}
	p = \kappa \rho^{\gamma},\label{S2:eq:7}
\end{equation}
where $\kappa$ is a positive constant. By combining equations~\eqref{S2:eq:2},\eqref{S2:eq:6}, and \eqref{S2:eq:7}, we obtain
\begin{equation}
	\nabla\cdot\left(\kappa\gamma\rho^{\gamma-2}\nabla\rho\right) = - 4\pi G\rho. \label{S2:eq:8}
\end{equation}
We introduce the following transformations:
\begin{equation}
	\rho = \lambda \theta^n,\quad \gamma = \dfrac{n+1}{n},\label{S2:eq:9}
\end{equation}
where $\lambda$ and $n$ are both positive constants, and $\theta$ is a function of $\mfx$. When $n \ne 0,1$, equation~\eqref{S2:eq:8} becomes nonlinear and does not admit a simple analytical solution. Substituting \eqref{S2:eq:9} into \eqref{S2:eq:8}, it can be simplified as:
\begin{equation}
	\dfrac{\kappa(n+1)\lambda^{\frac{1-n}{n}}}{4\pi G}\Delta \theta = -\theta^n. \label{S2:eq:10}
\end{equation}
We apply the following scaling:
\begin{equation*}
	a = \sqrt{	\dfrac{\kappa(n+1)\lambda^{\frac{1-n}{n}}}{4\pi G}}, \qquad \xi = \dfrac{x}{a}, \qquad \eta =  \dfrac{y}{a}, \qquad \mu = \dfrac{z}{a}.\label{S2:eq:11}
\end{equation*}
Thus the equation~\eqref{S2:eq:10} can be rewritten in the following non-dimensional form:
\begin{equation}
	\Delta'\theta:= \dfrac{\partial^2\theta}{\partial \xi^2} + \dfrac{\partial^2\theta}{\partial \eta^2} + 
	\dfrac{\partial^2\theta}{\partial \mu^2} = a^2\left(\dfrac{\partial^2\theta}{\partial x^2} + \dfrac{\partial^2\theta}{\partial y^2} + 
	\dfrac{\partial^2\theta}{\partial z^2}\right) = a^2\Delta \theta = -\theta^n.\label{S2:eq:12}
\end{equation}
In the spherically symmetric case, we introduce the radial coordinate $r = \sqrt{\xi^2+\eta^2+\mu^2}$, and the equation~\eqref{S2:eq:12} reduces to the classical Lane-Emden equation\cite{maciel2015introduction}:
\begin{equation}
	\dfrac{1}{r^2}\pdrv{ }{r}\bdp{r^2\pdrv{\theta}{r}} = - \theta^n.\label{S2:eq:13}
\end{equation} 
The Lane-Emden equation can be analytically solved \cite{maciel2015introduction} only for a few special integer values of the index $n$,
\begin{equation}
	\begin{cases}
		\theta_0(r) = 1 - \dfrac{1}{6}r^2, & n = 0,i.e.\, \gamma = \infty,\\[8pt]
		\theta_1(r) = \dfrac{\sin(r)}{r}, & n = 1,i.e.\, \gamma = 2,\\[8pt]
		\theta_5(r) = \dfrac{1}{\sqrt{1+\frac{1}{3}r^2}}, & n = 5,i.e.\, \gamma = \dfrac{6}{5}.
	\end{cases}\label{S2:eq:14}
\end{equation}

However, the polytropic hydrostatic equilibrium in two dimensional  radial symmetry case is quite different. To avoid introducing too many new symbols, we reuse the notation $r = \sqrt{\xi^2+\eta^2}$, then we can obtain
\begin{equation}
	\dfrac{1}{r}\pdrv{ }{r}\bdp{r\pdrv{\theta}{r}} = \pdrvn{\theta}{r}{2} + \dfrac{1}{r}\pdrv{\theta}{r} = - \theta^n. \label{lane2D}
\end{equation}
Actually, when $n = 1$, the equation $\eqref{lane2D}$ is the $0th$ Bessel ordinary equation. It has no explicit solution but a integral form:
\begin{equation*}
	\theta(r) = \dfrac{1}{\pi}\int_{0}^{\pi}\cos(r\sin \alpha)\dd{\alpha}.
\end{equation*} 
In our numerical tests, it can be computed by a high order quadrature rule.

We use the superscript ``$e$'' to denote functions corresponding to the steady state. As in the case of the compressible Euler equations with a static gravitational potential, the equilibrium state is described by the density $\rho^e$, velocity $\mfu^e$, and pressure $p^e$. In contrast, the equilibrium state of the SG Euler equations must additionally involve the gravitational potential $\phi^e(\mfx)$. For the solution $\theta(\mfx)$ of \eqref{S2:eq:10}, the equilibrium state has following analytic form:
\begin{equation}
	\rho^e = \lambda\bdp{\theta(\mfx)}^n,\; \mfu^e = 0,\; p^e = \kappa (\rho^e)^\gamma,\; \phi^e = -\dfrac{\kappa\gamma}{\gamma - 1}(\rho^e)^{\gamma - 1}.
    \label{eqstate}
\end{equation}

%
\subsection{Total-energy-conserving Property} \hfill \\

The total energy $E_{tot}$ is defined as the sum of the total non-gravitational energy $E$ and the gravitational energy $\frac{1}{2}\rho\phi$. Thus we can rewrite the energy equation of (\ref{S2:eq:1}) in a conservative form:
\begin{equation}
	\pdrv{}{t}\bdp{E+\dfrac{1}{2}\rho\phi} + \nabla\cdot\bdp{(E+p)\mfu + \mfF_g} = 0,\label{S2:eq:15}
\end{equation}
with the energy flux $\mfF_g$:
\begin{equation}
	\mathbf{F}_g = \dfrac{1}{8\pi G}\bdp{\phi\nabla\left(\dfrac{\partial \phi}{\partial t}\right) -\dfrac{\partial \phi}{\partial t}\nabla\phi} +\rho\mfu\phi.
\end{equation}
The energy conservative form \eqref{S2:eq:15} can be obtained by several simple steps:
\begin{equation}
	\begin{split}
		&\pdrv{}{t}\bdp{\dfrac{1}{2}\rho\phi} + \nabla\cdot\mfF_g - \rho\mfu\nabla\phi\\
		= & \dfrac{1}{2}\pdrv{\rho}{t}\phi +\dfrac{1}{2} \pdrv{\phi}{t}\rho + \dfrac{1}{8\pi G}\bdp{\phi\Delta\bdp{\pdrv{\phi}{t}} - \pdrv{\phi}{t}\Delta\phi} + \nabla\cdot(\rho\mfu)\phi \\
		= & \dfrac{1}{8\pi G}\bdp{\phi\Delta\bdp{\pdrv{\phi}{t}} + \pdrv{\phi}{t}\Delta\phi} + \dfrac{1}{8\pi G}\bdp{\phi\Delta\bdp{\pdrv{\phi}{t}} - \pdrv{\phi}{t}\Delta\phi} - \dfrac{1}{4\pi G}\Delta\bdp{\pdrv{\phi}{t}}\phi \\
		=& 0.
	\end{split}\label{S2:eq:16}
\end{equation}
By adding the energy equation of \eqref{S2:eq:1} and \eqref{S2:eq:16} we can get the conservative form \eqref{S2:eq:15}.
Notice that there exists the time derivative term $\partial\phi/\partial t$ in $\mfF_g$. Inspired by \cite{JIANG201348}, we introduce $\dot{\phi} $ to take place of  $\partial\phi/\partial t$, which satisfy another Poisson equation:
\begin{equation}
	\Delta(\dot{\phi}) = \Delta\left(\dfrac{\partial \phi}{\partial t}\right) = 4\pi G \dfrac{\partial\rho}{\partial t} = -4\pi G \nabla\cdot(\rho\mfu). \label{poissonphidt}
\end{equation}
Therefore, the gravity flux can be written into a compact form:
\begin{equation}
	\mathbf{F}_g = \dfrac{1}{8\pi G}\bdp{\phi\nabla\dot{\phi} -\dot{\phi}\nabla\phi} +\rho\mfu\phi. \label{S2:eq:19}
\end{equation}
When periodic boundary condition or compact support boundary condition is applied on $\Omega$, the total energy conservation is derived:
\begin{equation}
		\dfrac{\dd}{\dd{t}}\int_{\Omega}\bdp{E+\dfrac{1}{2}\rho\phi}\dd{\mfx} = 0.
\end{equation}


\section{Numerical schemes}
\label{sec3}
\setcounter{equation}{0}
\setcounter{figure}{0}
\setcounter{table}{0}

In this section, we will present the high order, WB and TEC LDG scheme for the compressible SG Euler equations. The standard LDG scheme will also be presented. 


\subsection{Notations}\label{3.1} \hfill \\

We first introduce following notations for LDG methods. We assume that the computation domain $\Omega\in\mathbb{R}^d$, where $d$ denotes the space dimension, is partitioned into mesh $\mT_h=\{K\}$ with its boundary $\Gamma_h$. The mesh is regular, i.e. there exists a positive constant $\sigma$ satisfying that
\begin{subequations}
	\begin{equation*}
		\max_{K\in\mT_h}\dfrac{h_K}{\rho_K}\leq\sigma,
	\end{equation*}
	\begin{equation*}
		h_K = \text{diam}(K),\quad \rho_K = \max\{\text{diam}(S):S\text{ is a ball contained in }K\},
	\end{equation*}
\end{subequations}
where $\text{diam}(K) = \sup\{||\mfy_1 - \mfy_2||,\forall \mfy_1,\mfy_2\in K\}$. 

We define several discontinuous finite element space of $V_h^k,\Sigma_h^k$ as:
\begin{subequations}
	\begin{equation}
		V_h^k = \{v\in L^2(\Omega):\,v|_K\in \mathbb{P}^k(K),\;\forall K\in\mT_h\},
	\end{equation}
	\begin{equation}
		\Sigma_h^k = \{\bv\in (L^2(\Omega))^d:\,\bv|_K\in (\mathbb{P}^k(K))^d,\;\forall K\in\mT_h\},
	\end{equation}
\end{subequations}
where $\mathbb{P}^k(K)$ can be either the space of polynomials of total degree up to $k$, i.e. $P^k$, or  the space of polynomials of degree in each component up to $k$, i.e. $Q^k$. We denote by $\mE_I$ the set of all interior faces of the triangulation $\mT_h$, while $\mE_B$ the set of the boundary of the triangulation $\mT_h$. Let $K^+$ and $K^-$ be two adjacent elements of $\mT_h$ and $\mfx$ is an arbitrary point on the $(d-1)$ dimensional face of $\mE = \partial K^+\cap\partial K^-\in \mE_I$. The vector $\mfn^+$ and $\mfn^-$ are the corresponding outward unit normals for the points on the face $\mE$. We also define the average $\av{\cdot}$ and jump $\jump{\cdot}$ of a function $u\in V_h^k$ and $\mfq\in \Sigma_h^k$ at $\mfx\in\mE_I$ as follows:
\begin{equation*}
\begin{split}
	\av{u} = \dfrac{1}{2}(u^+ + u^-), &\qquad \jump{u} =u^+\mfn^+ + u^-\mfn^-, \\
	\av{\mfq} = \dfrac{1}{2}(\mfq^+ + \mfq^-), &\qquad \jump{\mfq} = \mfq^+\cdot\mfn^+ + \mfq^-\cdot\mfn^-.
\end{split}\label{S3:eq:2}
\end{equation*}
Moreover, we define the operator $\mfP$ to represent the $L^2$ projection from $L^2(\Omega)$ onto $V_h^k$, i.e. for a function $u\in L^2(\Omega)$, then $\mfP u\in V_h^k$. It is defined by:
\begin{equation}
	\int_{K} u\,v\dd{\mfx} = \int_{K} \mfP u\,v\dd{\mfx},\qquad \forall v\in V_h^k,\forall K\in\mT_h.  \label{projector}
\end{equation}

\subsection{The multi-dimensional HLLC flux} \hfill \\

The HLLC flux is based on the approximate Riemann solver and consider the contact wave compared to the Harten–Lax–van Leer (HLL) flux. The HLLC flux is defined by:
\begin{equation}
	\mfF^{\text{hllc}}(\mfU_L, \mfU_R;\mfn) = \begin{cases}
		\mfF(\mfU_L), &\text{ if }0\leq S_L,\\
		\mfF_{*L}, &\text{ if } S_L\leq 0 \leq S_*,\\
		\mfF_{*R}, &\text{ if } S_*\leq 0 \leq S_R,\\
		\mfF(\mfU_R), &\textbf{ if } 0\geq S_R,
	\end{cases}\label{S2:eq:18}
\end{equation}
where $S_L,S_R,S_*$ are respectively the estimated left signal velocity, right signal velocity, and middle wave speed. The unit vector $\mfn$ denotes the direction of Riemann problem. The state velocity $\widehat{u}_L,\widehat{u}_R$ along the direction $\mfn$ is defined by $\widehat{u} = \mfu\cdot\mfn$. With sound speed $c = \sqrt{\gamma p/\rho}$, they can be computed by:
\begin{subequations}
	\begin{equation*}
		S_L = \min\{\widehat{u}_L - c_L, \widehat{u}_R - c_R\},\quad S_R = \max\{\widehat{u}_L + c_L, \widehat{u}_R + c_R\},
	\end{equation*}
	\begin{equation*}
		S_* = \dfrac{p_R - p_L + \rho_L\widehat{u}_L(S_L-\widehat{u}_L) - \rho_R\widehat{u}_R(S_R-\widehat{u}_R)}{\rho_L(S_L-\widehat{u}_L) - \rho_R(S_R-\widehat{u}_R)}.
	\end{equation*}
\end{subequations}
The fluxes $\mfF_{*L}$ and $\mfF_{*R}$ are given by:
\begin{subequations}
	\begin{equation*}
		\mfF_{*i} = \mfF_i + S_i(\mfU_{*i} - \mfU_i),\quad i=L,R,
	\end{equation*}
	\begin{equation*}
		\mfU_{*i} = \rho_i\bdp{\dfrac{S_i - \widehat{u}_i}{S_i - S_*}}\begin{pmatrix}
			1 \\
			\mfu_i \\ 
			\frac{E_i}{\rho_i} + (S_*-\widehat{u}_i)(S_*+\frac{p_i}{\rho_i(S_i-\widehat{u}_i)})
		\end{pmatrix}.
	\end{equation*}
\end{subequations}
Compared to the HLL flux, the HLLC flux considers the effect of contact and possesses the so-called contact property as outlined below:
\begin{lem}
	\label{contactdiscontinuity}
	For any two states $\mfU_L = (\rho_L,\mathbf{0},p/(\gamma-1))^T$ and $\mfU_R = (\rho_R,\mathbf{0},p/(\gamma-1))^T$, the multidimensional HLLC flux satisfies:
	\begin{equation}
		\mfF^{{\rm hllc}}(\mfU_L, \mfU_R;\mfn) = (0,p\mfn^T,0)^T.\label{S2:L1}
	\end{equation}
\end{lem}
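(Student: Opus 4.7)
The plan is to substitute the two specific states directly into the formulas defining the HLLC flux and show that everything collapses to the trivial intermediate state. First I would note that the assumption $\mfu_L = \mfu_R = \mathbf{0}$ forces the projected velocities to vanish: $\widehat u_L = \mfu_L\cdot\mfn = 0$ and $\widehat u_R = \mfu_R\cdot\mfn = 0$. Moreover, because both pressures equal $p$, the two sound speeds $c_L=\sqrt{\gamma p/\rho_L}$, $c_R=\sqrt{\gamma p/\rho_R}$ are strictly positive, so from the definitions
\begin{equation*}
S_L = \min\{-c_L,-c_R\} < 0,\qquad S_R = \max\{c_L,c_R\} > 0.
\end{equation*}

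Next I would plug the same vanishing quantities into the contact speed formula. The numerator becomes $p_R-p_L + \rho_L\widehat u_L(S_L-\widehat u_L) - \rho_R\widehat u_R(S_R-\widehat u_R) = 0$, while the denominator $\rho_L S_L - \rho_R S_R$ is nonzero, so $S_* = 0$. Consequently $S_L \leq 0 = S_*$, and by continuity at $S_*=0$ either intermediate branch may be used; I would verify both give the same value to confirm consistency.

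Then I would compute the intermediate state. With $\widehat u_L = 0$ and $S_* = 0$ the prefactor $\rho_L(S_L-\widehat u_L)/(S_L-S_*) = \rho_L$. The factor $(S_*-\widehat u_i)(S_* + p_i/[\rho_i(S_i-\widehat u_i)])$ in the energy component vanishes since $S_*-\widehat u_i = 0$. Therefore
\begin{equation*}
\mfU_{*L} = \bigl(\rho_L,\ \mathbf{0},\ E_L\bigr)^T = \mfU_L,
\end{equation*}
and the same computation yields $\mfU_{*R}=\mfU_R$. Hence $\mfF_{*L} = \mfF(\mfU_L) + S_L(\mfU_{*L}-\mfU_L) = \mfF(\mfU_L)$. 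A direct evaluation of the normal Euler flux at a zero-velocity state gives
\begin{equation*}
\mfF(\mfU_L)\cdot\mfn = \bigl(\rho_L(\mfu_L\cdot\mfn),\ \rho_L\mfu_L(\mfu_L\cdot\mfn)+p\mfn,\ (E_L+p)\mfu_L\cdot\mfn\bigr)^T = (0,\ p\mfn^T,\ 0)^T,
\end{equation*}
which is exactly the claimed right-hand side.

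There is really no serious obstacle here; the proof is a direct verification. The only point that requires a moment of care is the coincidence $S_*=0$ on the branch boundary: I would make it explicit that the two middle-branch expressions agree there, so the case analysis in \eqref{S2:eq:18} is unambiguous. Beyond that, the argument is purely algebraic substitution.
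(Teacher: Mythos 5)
Your proof is correct and complete: it is precisely the straightforward direct verification (zero normal velocities force $S_L<0<S_R$ and $S_*=0$, the starred states collapse to $\mfU_L$ and $\mfU_R$, and the normal Euler flux at a zero-velocity state is $(0,p\mfn^T,0)^T$) that the paper alludes to when it omits the proof of Lemma \ref{contactdiscontinuity}. Your extra remark that both middle branches agree at the boundary case $S_*=0$ is a nice touch of care, and nothing is missing.
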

The proof of Lemma \ref{contactdiscontinuity} is straightforward and is omitted here. 
For more details of HLLC flux, we refer the readers to \cite{Batten1997multidimensionalHLLC,Toro2009RiemannSolver}.
The above contact property is very important in obtaining WB property.

To clearly demonstrate the importance of structure-preserving in the numerical section, we will describe two types of numerical schemes:
\begin{enumerate}
	\item \textbf{The standard scheme}: a baseline scheme that does not account for the WB and TEC properties;
	\item \textbf{The structure-preserving scheme}: our carefully designed scheme that enjoys both WB and TEC properties.
\end{enumerate}

\subsection{The standard LDG scheme} \label{3.2} \hfill \\

We begin by briefly introducing the standard LDG scheme for SG Euler equations without consideration of WB or TEC properties. Following the idea of LDG scheme, we add the auxiliary term $\mfg = \nabla\phi$, representing the gravitational force. The compressible SG Euler equations \eqref{S2:eq:1}-\eqref{S2:eq:2} can be written as follows:
\begin{equation}
	\left\{
	\begin{aligned}
		&\mfU_t + \nabla\cdot\mfF(\mfU) = \mfS(\mfU,\mfg),\\
		&\mfg = \nabla\phi,\\
		&\nabla\cdot\mfg = 4\pi G\rho.
	\end{aligned}\right. \label{S3:eq:3}
\end{equation}
The standard semi-discrete LDG scheme for \eqref{S3:eq:3} is given as follows: Find $\mfU_h\in [V_h^k]^{d+2}$, $\phi\in V_h^k$, $\mfg\in \Sigma_h^k$, such that for any test function $v\in V_h^k$, $\mfw\in \Sigma_h^k$, $\psi\in V_h^k$, the following holds:

\begin{subequations}
\begin{align}
	\int_K(\mfU_h)_tv\dd{\mfx} &- \int_K\mfF(\mfU_h)\cdot\nabla v\dd{\mfx} + \sum_{\mE\in \partial K}\int_{\mE}\widehat{\mfF}_{\mfn_{\mE,K}}v^{\text{int}(K)}\dd{s} = \int_K\mfS(\mfU_h,\mfg_h)v\dd{\mfx}, \label{S3:eq:4}\\
	\int_K\mfg_h\cdot\mfw\dd{\mfx} &= \sum_{\mE\in \partial K}\int_{\mE}\widehat{\phi_h}\mfw^{\text{int}(K)}\cdot\mfn_{\mE,K}\dd{s} - \int_{K}\phi_h\nabla\cdot\mfw\dd{\mfx},	\label{S3:eq:5}\\
	\int_K\mfg_h\cdot\nabla\psi\dd{\mfx} &= \sum_{\mE\in \partial K}\int_{\mE}\psi^{\text{int}(K)}\widehat{\mfg_h}\cdot\mfn_{\mE,K}\dd{s} - \int_{K}4\pi G\rho_h\psi\dd{\mfx},\label{S3:eq:6}
\end{align}\label{scheme:1}
\end{subequations}
where $\partial K$ denotes the boundary of cell $K$, and  $\mfn_{\mE,K}$ is the outward unit normal to the edge of $\mE$ of $K$. The superscripts $``\text{int}(K)"$ and $``\text{ext}(K)"$ indicate the limit value at the cell interface taken from the interior or the exterior of $K$. The numerical flux $\widehat{\mfF}_{\mfn_{\mE},K}$ in the Euler equations is chosen as HLLC flux \eqref{S2:eq:18}, i.e.
\begin{equation*}
	\widehat{\mfF}_{\mfn_{\mE,K}} = \mfF^{{\rm hllc}}\bdp{\mfU^{{\rm int}(K)}, \mfU^{{\rm ext}(K)};\mfn_{\mE,K}}.
\end{equation*}
Following the idea of LDG scheme for Poisson equation in \cite{Superconvergence_LDG_elliptic,Priori_analysis_LDG_elliptic}, we introduce two penalty parameters $C_{11}$ and $\mfC_{12}$, thus the numerical flux $\widehat{\phi_h}$ and $\widehat{\mfg_h}$ in \eqref{S3:eq:5} and \eqref{S3:eq:6} are defined by:
\begin{subequations}
\label{S3:eq:14}
\begin{align}
	\widehat{\mfg_h} &= \av{\mfg_h} - C_{11}\jump{\phi_h} - \mfC_{12}\jump{\mfg_h},\label{S3:eq:15}\\
	\widehat{\phi_h} &= \av{\phi_h} + \mfC_{12}\cdot\jump{\phi_h}.\label{S3:eq:16}
\end{align}      
\end{subequations}
In this papar, we compute on the tensor mesh and set the parameter $C_{11} = 1$ and $\mfC_{12} = (0.5,0.5,0.5)^T$ when $d=3$, while $C_{11} = 1$ and $\mfC_{12} = (0.5,0.5)^T$ when $d=2$. After applying the Schur complement, the stiffness matrix is symmetric and positive definite. The initial conditions of \eqref{S3:eq:3} are given by $\{\rho^0(\mfx),(\rho\mfu)^0(\mfx),E^0(\mfx)\}$, and the initial conditions for the scheme \eqref{scheme:1} are given by the projection operator $\mfP$, i.e.
\begin{equation*}
	\left\{\rho_h^0(\mfx),\,(\rho\mfu)_h^0(\mfx), E_h^0(\mfx)\right\} = \left\{\mfP\rho^0(\mfx),\,\mfP(\rho\mfu)^0(\mfx),\mfP E^0(\mfx)\right\}.
\end{equation*}
The Euler-forward standard LDG scheme is outlined as follows:
\begin{subequations}
	\label{FullyDiscreteStandard}
	\begin{align}
		\int_K\bdp{\dfrac{\mfU^{n+1}_h -\mfU^{n}_h }{\Delta t}}v\dd{\mfx} &- \int_K\mfF(\mfU^n_h)\cdot\nabla v\dd{\mfx} + \sum_{\mE\in \partial K}\int_{\mE}\widehat{\mfF}^n_{\mfn_{\mE,K}}v^{\text{int}(K)}\dd{s} = \int_K\mfS(\mfU^n_h,\mfg^n_h)v\dd{\mfx}, \\
		\int_K\mfg^n_h\cdot\mfw\dd{\mfx} &= \sum_{\mE\in \partial K}\int_{\mE}\widehat{\phi^n_h}\mfw^{\text{int}(K)}\cdot\mfn_{\mE,K}\dd{s} - \int_{K}\phi^n_h\nabla\cdot\mfw\dd{\mfx},	\\
		\int_K\mfg^n_h\cdot\nabla\psi\dd{\mfx} &= \sum_{\mE\in \partial K}\int_{\mE}\psi^{\text{int}(K)}\widehat{\mfg^n_h}\cdot\mfn_{\mE,K}\dd{s} - \int_{K}4\pi G\rho^n_h\psi\dd{\mfx}.
	\end{align}
\end{subequations}

\begin{rem}
	Following a similar approach in \cite{CASTILLO20061307,Castillo2002performance}, the stiffness matrix arising from the LDG methods exhibits the following block structure:
	\begin{equation}
		\begin{bmatrix}
			K & B \\
			-B^T & S 
		\end{bmatrix}\begin{bmatrix}
			\mfg_h\\
			\phi_h
		\end{bmatrix} = \begin{bmatrix}
			g \\
			f
		\end{bmatrix}.\label{S3:eq:17}
	\end{equation}
	Directly solving the linear system \eqref{S3:eq:17} is challenging due to the nonsymmetric saddle-point structure. A more practical approach is to apply the Schur complement, reducing the problem to solving a smaller linear system:
	\begin{equation}
		A\phi_h = (S + B^TK^{-1}B)\phi_h = f + B^TK^{-1}\mfg_h.\label{stiffmatrix1}
	\end{equation} 
	In that case, the stiffness matrix $A$ in \eqref{stiffmatrix1} is symmetric and positive definite and a suitable Krylov subspace method can be applied, such as Conjugate Gradient method (CG) and preconditioned Conjugate Gradient method (PCG). Since the sparse matrix remains unchanged at each time step, we apply the direct solver, i.e., Cholesky factorization at the beginning and reuse it throughout the computations. However, for the periodic boundary condition, the stiffness matrix exhibits another block structure:
	\begin{equation*}
		\begin{bmatrix}
			K & B \\
			-C^T & S 
		\end{bmatrix}\begin{bmatrix}
			\mfg_h\\
			\phi_h
		\end{bmatrix} = \begin{bmatrix}
			g \\
			f
		\end{bmatrix}.\label{Poissonsolver_period}
	\end{equation*}
	Similarly, we solve the smaller linear system:
	\begin{equation}
		A\phi_h = (S + C^TK^{-1}B)\phi_h = f + C^TK^{-1}\mfg_h.\label{stiffmatrix2}
	\end{equation}
	Therefore, the stiffness matrix $A$ in \eqref{stiffmatrix2} will not be symmetric any more.  We adopt the LU decomposition here.
\end{rem}
\subsection{The structure-preserving LDG scheme} \label{3.3}\hfill \\

However, the scheme \eqref{scheme:1} does not inherently satisfy the WB or TEC properties. In this section, we introduce our structure-preserving LDG scheme. To obtain WB property, we split the source term into equilibrium state and perturbation state. To obtain the TEC property, we rewrite the energy equation in SG Euler equations \eqref{S2:eq:0} in the conservative form \eqref{S2:eq:15}.  Specifically, the SG Euler equations can be rewritten as:
\begin{subequations}
\label{S3:eq:18}
\begin{align}
	&\pdrv{\rho}{t} + \nabla\cdot(\rho\mfu) = 0,\label{S3:eq:7}\\
	&\pdrv{\rho\mfu}{t} + \nabla\cdot(\rho\mfu\otimes\mfu+p\mathbf{I}) = -\rho\mfg^e -\rho\mfg^{\delta}, \label{S3:eq:8}\\
	&\dfrac{\partial }{\partial t}\left(E_{tot}\right) + \nabla\cdot\left((E+p)\mfu + \mfF_g\right) = 0, \label{S3:eq:9}
\end{align}
where the gravitational force $\mfg$ is decomposed into an equilibrium $\mfg^{e}$ and another perturbation $\mfg^{\delta}$ part, which satisfy
\begin{align}
	&\mfg^e = \nabla\phi^e,\; \nabla\cdot\mfg^e = 4\pi G\rho^e,\label{S3:eq:11:1}\\
	&\mfg^{\delta} = \nabla\phi^{\delta},\; \nabla\cdot\mfg^{\delta} = 4\pi G\rho^{\delta},\label{S3:eq:11:2}
\end{align}
where the gravitational potential $\phi$ and the density $\rho$ are correspondingly decomposed as
\begin{equation}
\phi =  \phi^e + \phi^{\delta},\; \rho = \rho^e + \rho^{\delta}\label{S3:eq:11:3}.
\end{equation}
Besides, the total energy $E_{tot}$ and energy flux $\mfF_g$ in \eqref{S3:eq:9} are defined as:
\begin{align}
	&\mathbf{F}_g = \dfrac{1}{8\pi G}\bdp{\phi\dot{\mfg} -\dot{\phi}\mfg} +\rho\mfu\phi, \label{S3:eq:10}\\
	&E_{tot} = E + \dfrac{1}{2}\rho\phi,
\end{align}
where $\dot{\phi}:=\partial\phi/\partial t$ satisfy another Poisson equation \eqref{poissonphidt}:
\begin{equation}
	\dot{\mfg} = \nabla\dot{\phi},\;\nabla\cdot \dot{\mfg} =  -4\pi G \nabla\cdot(\rho\mfu).\label{S3:eq:12}
\end{equation}
\end{subequations} 
We use the following notations for ease of presentation:
\begin{equation}
	\mfU = \begin{bmatrix}
		\rho  \\
		\rho \mathbf{u}\\
		E     \\
	\end{bmatrix},
	\mfW = \begin{bmatrix}
		\rho  \\
		\rho \mathbf{u}\\
		E_{tot}     \\
	\end{bmatrix}, \mfF(\mfW) = \begin{bmatrix}
		\rho \mathbf{u}\\
		\rho \mathbf{u} \otimes \mfu + p \mfI\\
		(E + p) \mfu
	\end{bmatrix}, \mfG(\mfW) = \begin{bmatrix}
	\mathbf{0}\\
	\mathbf{0}\\
	\mfF_g
	\end{bmatrix},\mfS = 
	\begin{bmatrix}
		0\\
		-\rho \nabla \phi\\
		0
	\end{bmatrix}.
\end{equation}
The semi-discrete structure-preserving LDG scheme for \eqref{S3:eq:18} can be written as follows: Find $\mfW_h\in [V_h^k]^{d+2}$, $\phi_h^e\in V_h^k$, $\phi_h^{\delta}\in V_h^k$, $\dot{\phi_h}\in V_h^k$, $\mfg_h^e\in \Sigma_h^k$, $\mfg_h^{\delta}\in \Sigma_h^k$, and $\dot{\mfg_h}\in \Sigma_h^k$, such that for any test function $v\in V_h^k$, $\mfw\in \Sigma_h^k$, $\psi\in V_h^k$, the following holds:
\begin{subequations}
\label{scheme:2}
\begin{equation}
	\begin{split}
		\int_K(\mfW_h)_tv\dd{\mfx} &- \int_K\mfF(\mfW_h)\cdot\nabla v\dd{\mfx} + \sum_{\mE\in \partial K}\int_{\mE}\widehat{\mfF}_{\mfn_{\mE,K}}v^{\text{int}(K)}\dd{s} \\
		&- \int_K\mfG(\mfW_h)\cdot\nabla v\dd{\mfx} + \sum_{\mE\in \partial K}\int_{\mE}\widehat{\mfG}_{\mfn_{\mE,K}}v^{\text{int}(K)}\dd{s}= \int_K\mfS v\dd{\mfx}, \label{S3:eq:21}
	\end{split}
\end{equation}
where $\bdp{\mfg^e_h,\phi^e_h}$ and $\bdp{\mfg^{\delta}_h,\phi^{\delta}_h}$ satisfy the following Poisson equation:
\begin{equation}
	\begin{split}
		\int_K\mfg^e_h\cdot\mfw\dd{\mfx} &= \sum_{\mE\in \partial K}\int_{\mE}\widehat{\phi^e_h}\mfw^{\text{int}(K)}\cdot\mfn_{\mE,K}\dd{s} - \int_{K}\phi^e_h\nabla\cdot\mfw\dd{\mfx},\\
		\int_K\mfg^e_h\cdot\nabla\psi\dd{\mfx} &= \sum_{\mE\in \partial K}\int_{\mE}\psi^{\text{int}(K)}\widehat{\mfg^e_h}\cdot\mfn_{\mE,K}\dd{s} - \int_{K}4\pi G\rho^e_h\psi\dd{\mfx},\label{S3:eq:29}
	\end{split}
\end{equation}
\begin{equation}
	\begin{split}
		\int_K\mfg^{\delta}_h\cdot\mfw\dd{\mfx} &= \sum_{\mE\in \partial K}\int_{\mE}\widehat{\phi^{\delta}_h}\mfw^{\text{int}(K)}\cdot\mfn_{\mE,K}\dd{s} - \int_{K}\phi^{\delta}_h\nabla\cdot\mfw\dd{\mfx},\\
		\int_K\mfg^{\delta}_h\cdot\nabla\psi\dd{\mfx} &= \sum_{\mE\in \partial K}\int_{\mE}\psi^{\text{int}(K)}\widehat{\mfg^{\delta}_h}\cdot\mfn_{\mE,K}\dd{s} - \int_{K}4\pi G\rho^{\delta}_h\psi\dd{\mfx},\label{S3:eq:30}
	\end{split}
\end{equation}
\begin{equation}
	\phi_h = \phi_h^{\delta} + \phi_h^e,\;\mfg_h = \mfg_h^{\delta} + \mfg_h^e,\;\rho_h = \rho_h^{\delta} + \rho_h^e,
\end{equation}
and let $\bdp{\dot{\mfg_h},\dot{\phi_h}}$ be the solution of the following Poisson equation:
\begin{equation}
	\begin{split}
		\int_K\dot{\mfg_h}\cdot\mfw\dd{\mfx} &= \sum_{\mE\in \partial K}\int_{\mE}\widehat{\dot{\phi_h}}\mfw^{\text{int}(K)}\cdot\mfn_{\mE,K}\dd{s} - \int_{K}\dot{\phi_h}\nabla\cdot\mfw\dd{\mfx},\\
		\int_K\dot{\mfg_h}\cdot\nabla\psi\dd{\mfx} &= \sum_{\mE\in \partial K}\int_{\mE}\psi^{\text{int}(K)}\widehat{\dot{\mfg_h}}\cdot\mfn_{\mE,K}\dd{s} - \int_K 4\pi G \nabla\cdot(\rho\mfu)_h \psi \dd{\mfx},\label{S3:eq:31}
	\end{split}
\end{equation}
\end{subequations}
where the flux $\widehat{\phi^e_h}$, $\widehat{\mfg^e_h}$, $\widehat{\phi^{\delta}_h}$, $\widehat{\mfg^{\delta}_h}$, $\widehat{\dot{\phi_h}}$, and $\widehat{\dot{\mfg_h}}$ are the same as \eqref{S3:eq:14}, i.e.,
\begin{subequations}
\begin{align}
	\widehat{\mfg^e_h} &= \av{\mfg^e_h} - C_{11}\jump{\phi^e_h} - \mfC_{12}\jump{\mfg^e_h},\\
	\widehat{\phi^e_h} &= \av{\phi^e_h} + \mfC_{12}\cdot\jump{\phi^e_h},
\end{align}
\begin{align}
	\widehat{\mfg^{\delta}_h} &= \av{\mfg^{\delta}_h} - C_{11}\jump{\phi^{\delta}_h} - \mfC_{12}\jump{\mfg^{\delta}_h},\\
	\widehat{\phi^{\delta}_h} &= \av{\phi^{\delta}_h} + \mfC_{12}\cdot\jump{\phi^{\delta}_h},
\end{align}
\begin{align}
	\widehat{\dot{\mfg_h}} &= \av{\dot{\mfg_h}} - C_{11}\jump{\dot{\phi_h}} - \mfC_{12}\jump{\dot{\mfg_h}},\\
	\widehat{\dot{\phi_h}} &= \av{\dot{\phi_h}} + \mfC_{12}\cdot\jump{\dot{\phi_h}}.\label{fluxdot}
\end{align} \label{LDGflux}
\end{subequations}

For simplicity, let $\mD_1$ denote the discrete Poisson solver in \eqref{S3:eq:29} and \eqref{S3:eq:30}, and let $\mD_2$ denote the discrete Poisson solver in \eqref{S3:eq:31}. In other words, we have:
\begin{equation}
	\bdp{\mfg^e_h,\phi^e_h} = \mD_1\bdp{4\pi G \rho_h^e},\; \bdp{\mfg^{\delta}_h,\phi^{\delta}_h} = \mD_1\bdp{4\pi G \rho^{\delta}_h},\;
	\bdp{\dot{\mfg_h},\dot{\phi_h}} = \mD_2\bdp{4\pi G\nabla\cdot(\rho\mfu)_h}. 
	\label{Poissonsolver}
\end{equation}

We note that the equilibrium state $\{\rho^e(\mfx),\mfu^e(\mfx) = \mathbf{0},p^e(\mfx),\phi^e(\mfx)\}$ to be preserved is assumed to be stationary which is obtained from \eqref{eqstate}. Numerically the discrete equilibrium state variables $\rho_h^e$, $\mfu_h^e$ and $p_h^e$ are given by the projector in \eqref{projector}, i.e. $\rho_h^e = \mfP\rho^e$, $\mfu_h^e = \mfP\mfu^e = \mathbf{0}$, and $p_h^e = \mfP p^e$. The discrete gravitational potential at equilibrium $\phi_{h}^{e}$ is obtained from \eqref{Poissonsolver}, namely the equilibrium state is initialized at the beginning and remain unchanged for later time. Therefore, only the perturbation part $\{\rho_h^{\delta}, \phi_h^{\delta}\}$ evolves over time.

For structure preserving, the WB property is obtained for the DG method with a modified HLLC flux and a revised source term approximation, which will be shown in Section~\ref{3.3.1} and Section~\ref{3.3.3}. The TEC property is ensured as the conservative form for the total energy $E_{tot}$ is used. However, directly solving the Poisson equation \eqref{S3:eq:12} appeared in the energy flux with \eqref{S3:eq:31} suffers from a loss of accuracy due to the temporal derivative. To address this issue, we apply the summation-by-parts approach to the right-hand side of \eqref{S3:eq:31}, which will be shown in Section~\ref{3.3.2} and numerically demonstrated in  Section \ref{sec5}.  

\subsubsection{The modified HLLC numerical flux}
\label{3.3.1}\hfill \\

Based on the contact property of HLLC flux, we follow the idea of the modifications in \cite{wu2021uniformly}:
\begin{equation}
	\widehat{\mfF}_{\mfn_{\mathcal{E}, K}} = \mfF^{\text{hllc}} \left( 
	\frac{p_h^{e, \star}}{p_h^{e, \text{int}(K)}} \mfU_h^{\text{int}(K)}, 
	\frac{p_h^{e, \star}}{p_h^{e, \text{ext}(K)}} \mfU_h^{\text{ext}(K)}; 
	\mfn_{\mE, K} 
	\right) = \left(\widehat{f}^{[1]},\widehat{\bm{f}}^{[2],T},\widehat{f}^{[3]}\right)_{\mfn_{\mE, K}}^T,\label{S3:eq:13}
\end{equation}
where $p_h^{e,*}$ is chosen as
\begin{equation*}
	p_h^{e,*} = \dfrac{1}{2}\bdp{p_h^{e,\text{int}(K)} + p_h^{e,\text{ext}(K)}}.
\end{equation*}
Based on the Lemma \ref{contactdiscontinuity}, if  $\mfU_h^{{\rm int}(K)} = (\rho_h^{e,{\rm int}(K)},\mathbf{0},p_h^{e,{\rm int}(K)}/(\gamma-1))^T,\mfU_h^{{\rm ext}(K)} = (\rho_h^{e,{\rm ext}(K)},\mathbf{0},p_h^{e,{\rm ext}(K)}/(\gamma-1))^T$, then $\widehat{\mfF}_{\mfn_{\mathcal{E}, K}} = \bdp{0,p_h^{e,*}\mfn^T,0}^T$. With the $N$ points numerical quadrature on the $d-1$ dimensional face $\mE$, we obtain the discrete version of the flux term of \eqref{S3:eq:21}:
\begin{equation}
	\int_{\mE}\widehat{\mfF}_{\mfn_{\mE,K}}v^{{\rm int}(K)}\dd{s} \approx \sum_{\mu = 1}^{N}w_{\mE}^{(\mu)} \mfF\bdp{\mfU_h(\mfx_{\mE}^{(\mu)})}v^{{\rm int}(K)}(\mfx_{\mE}^{(\mu)}),\label{S3:eq:19}
\end{equation}
where $\{\mfx_{\mE}^{(\mu)},w_{\mE}^{(\mu)}\}_{\mu=1}^N$ denote the quadrature points and weights on $\mE$.

\subsubsection{Source term approximations} \label{3.3.3} \hfill \\

Let the source term $\mfS := \bdp{0,\mfS^{[2],T},0}^T$ with $\mfS^{[2]} = -\rho\nabla\phi^e -\rho\nabla\phi^{\delta}$. For the static gravitational potential, we need a balance between the flux gradient and source term. The treatment is similar to \cite{wu2021uniformly,DU2024WBPP}. The gravitational potential at the equilibrium state $\phi^e$ is considered as the static gravitational potential, while the perturbation part $\phi^{\delta}$ is directly addressed. 
\begin{equation}
\begin{split}
	\int_{K}\mathbf{S}^{[2]}v\dd{\mfx} &= \int_{K}-\rho\nabla\phi v\dd{\mfx} =\int_{K}-\rho\nabla(\phi^e + \phi^\delta) v\dd{\mfx} =  \int_{K}\rho \dfrac{\nabla p^e}{\rho^e}v\dd{\mfx} +  \int_{K}-\rho \nabla\phi^\delta v\dd{\mfx}\\
	& = \int_{K}\nabla p^e \bdp{\dfrac{\rho}{\rho^e} - \dfrac{\overline{\rho}}{\overline{\rho^e}}}v\dd{\mfx} +  \int_{K}\dfrac{\overline{\rho}}{\overline{\rho^e}}p^ev\dd{\mfx}+  \int_{K}-\rho \nabla\phi^\delta v \dd{\mfx}\\
	& = \int_{K}\nabla p^e \bdp{\dfrac{\rho}{\rho^e} - \dfrac{\overline{\rho}}{\overline{\rho^e}}}v\dd{\mfx} + \dfrac{\overline{\rho}}{\overline{\rho^e}}\bdp{\sum_{\mE\in\partial K}\int_{\mE}p^ev\mfn_{\mE,K}\dd{s} - \int_{K}p^e\nabla v\dd{\mfx}} + \int_{K}-\rho\nabla\phi^\delta v  \dd{\mfx}.
\end{split}\label{S3:eq:23}
\end{equation}
With the $Q$ points numerical quadrature in the $d$ dimensional element $K$ and the numerical quadrature in \eqref{S3:eq:19}, the fully discrete version of \eqref{S3:eq:23} is given by:
\begin{equation}
\begin{split}
	\left<\mfS^{[2]},v\right>_{K}&:= \sum_{q = 1}^{Q}w_{K}^{(q)} \left(\dfrac{\rho_h(\mfx_K^{(q)})}{\rho^e_h(\mfx_K^{(q)})} - \dfrac{\overline{(\rho_h)_K}}{\overline{(\rho^e_h)_K}}\right)\nabla p_h^e(\mfx_K^{(q)})v(\mfx_K^{(q)}) \\
	&\quad + \dfrac{\overline{(\rho_h)_K}}{\overline{(\rho^e_h)_K}}\left(\sum_{\mE\in\partial K}\sum_{\mu = 1}^{N}w_{\mE}^{(\mu)}  p^e_h(\mfx_{\mE}^{(\mu)})v^{\text{int}(K)}\mfn_{\mE,K}(\mfx_{\mE}^{(\mu)})  - \sum_{q = 1}^{Q}w_K^{(q)} p_h^e(\mfx_K^{(q)})\nabla v(\mfx_K^{(q)})\right) \\
	&\quad + \sum_{q = 1}^{Q}-w_K^{(q)}\rho_h(x_K^{(q)})\mfg^{\delta}_h(\mfx_K^{(q)})v(\mfx_K^{(q)}), \label{S3:eq:24}
\end{split}
\end{equation}
where $\{\mfx_{K}^{(q)},w_{K}^{(q)}\}_{q=1}^Q$ denote the quadrature points and weights in $K$.

\subsubsection{Energy flux $(\mfF_g)$ approximations} \label{3.3.2} \hfill \\

As mentioned in \eqref{S2:eq:19}, the energy flux has a very complex form. Notice that $\bdp{\mfg_h,\phi_h}$ is also split into equilibrium state and perturbation state, thus it is obtained by:
\begin{equation*}
	\bdp{\mfg_h,\phi_h} = \bdp{\mfg^e_h,\phi^e_h} + \bdp{\mfg^{\delta}_h,\phi^{\delta}_h} = \bdp{\mfg^e_h,\phi^e_h} + \mD_1(4\pi G(\rho_h - \rho_h^e)).
\end{equation*}
However, the Poisson equation $\mD_2$ for the time derivative of gravitational potential needs a summation-by-parts approach to avoid an order deficiency. Specifically, \eqref{S3:eq:31} will be modified as:
\begin{equation}
	\label{Poisson2}
	\begin{split}
		\int_K\dot{\mfg_h}\cdot\mfw\dd{\mfx} &= \sum_{\mE\in \partial K}\int_{\mE}\widehat{\dot{\phi_h}}\mfw^{{\rm int}(K)}\cdot\mfn_{\mE,K}\dd{s} - \int_{K}\dot{\phi_h}\nabla\cdot\mfw\dd{\mfx},\\
		\int_K\dot{\mfg_h}\cdot\nabla\psi\dd{\mfx} &= \int_{\mE}\psi^{{\rm int}(K)}\widehat{\dot{\mfg_h}}\cdot\mfn_{\mE,K}\dd{s} + \int_K 4\pi G (\rho\mfu)_h \cdot\nabla\psi \dd{\mfx} - \sum_{\mE\in \partial K}\int_{\mE}4\pi G\widehat{f}^{[1]}_{\mfn_{\mE,K}}\psi^{{\rm int}(K)}\dd{s}.
	\end{split}
\end{equation}
Although the notation $\mD_2$ was previously defined, in the rest of this paper it will denote the modified Poisson solver \eqref{Poisson2}, i.e.,
\begin{equation*}
	\bdp{\dot{\mfg_h},\dot{\phi_h}} = \mD_2\bdp{4\pi G\nabla\cdot(\rho\mfu)_h}.
\end{equation*}
Therefore, the numerical energy flux is given as follows:
\begin{equation*}
	(\widehat{\mfF_g})_{\mfn_{\mathcal{E},K}} = \dfrac{1}{8\pi G}\left(\widehat{\phi_h}\widehat{\dot{\mfg_h}} - \widehat{\dot{\phi_h}}\widehat{\mfg_h}\right)\cdot \mfn_{\mathcal{E},K}+ \widehat{f}^{[1]}_{\mfn_{\mE,K}} \widehat{\phi_h},\label{S3:eq:20}
\end{equation*}
where $(\widehat{\mfg_h},\widehat{\phi_h}) = (\widehat{\mfg^{\delta}_h},\widehat{\phi^{\delta}_h}) + (\widehat{\mfg^{e}_h},\widehat{\phi^{e}_h})$ and $\bdp{\widehat{\dot{\phi_h}},\widehat{\dot{\mfg_h}}}$ are defined in \eqref{LDGflux}. The term $\widehat{f}^{[1]}_{\mfn_{\mE,K}}$ denotes the first component of the modified HLLC flux introduced in \eqref{S3:eq:13}. The modified HLLC flux is evaluated first, followed by the energy flux. Similar to \eqref{S3:eq:19}, we omit the numerical quadrature on $\mE$ of $	(\widehat{\mfF_g})_{\mfn_{\mathcal{E},K}}$ here. In the internal of element $K$, the energy flux is given by:
\begin{equation*}
	\mfF_g = \dfrac{1}{8\pi G}\bdp{\phi_h\dot{\mfg_h} -\dot{\phi_h}\mfg_h} + (\rho\mfu)_h\phi_h.
\end{equation*}
With the $Q$ points numerical quadrature in the $d$ dimensional element $K$ similar in \eqref{S3:eq:24}, we obtain the fully discrete version of $\mfG$. The numerical quadrature of the third term of $\mfG$ in $K$ is given by:
\begin{equation}
	\int_K\mfF_g\cdot\nabla v\dd{\mfx} \approx\sum_{q=1}^{Q}w_K^{(q)}\mfF_g(\mfx_K^{(q)})\cdot \nabla v(\mfx_K^{(q)}), \label{S3:eq:22}
\end{equation}
where $\{\mfx_{K}^{(q)},w_{K}^{(q)}\}_{q=1}^Q$ denote the quadrature points and weights in $K$.

\subsubsection{Flowchart for the structure-preserving LDG scheme} \label{3.3.4} \hfill \\

Suppose that the initial conditions for \eqref{S3:eq:18} are $\rho^0(\mfx)$, $(\rho\mfu)^0(\mfx)$, $E^0(\mfx)$, and $\phi^0(\mfx)$. These variables are initialized by the projector \eqref{projector}:
\begin{equation}
\rho_h^0(\mfx) = \mfP\rho^0(\mfx),\quad (\rho\mfu)_h^0(\mfx) = \mfP(\rho\mfu)^0(\mfx),\quad E_h^0(\mfx) = \mfP E^0(\mfx). \label{S3:eq:32}
\end{equation}
The initial gravitational potential $\phi_h^0$ and the total energy $(E_{tot})_h^0$ are obtained by applying the Poisson solver $\mD_1$, i.e.,
\begin{equation}
(\mfg_h^0,\phi_h^0) = \mD_1(4\pi G\rho_h^0),\quad
(E_{tot})^0_h = \mfP\bdp{E_h^0 + \tfrac{1}{2}\rho_h^0\phi_h^0} := E_h^0 + \mfP\bdp{\tfrac{1}{2}\rho_h^0\phi_h^0}. \label{S3:eq:33}
\end{equation}

Based on the above approximations and modifications for the flux term and source term, the structure-preserving  LDG scheme with Euler-forward method is implemented in the following steps:
\begin{itemize}
\label{WB_EC}
\item \label{Step1}\textbf{Step 1.} Suppose that the variables $(\rho_h^n,(\rho\mfu)_h^n,(E_{tot})^n_h)$ at time $t^n$ remains known. The gravitational force and gravitational potential $(\mfg_h^n,\phi_h^n)$ is given by:
\begin{equation}
	\bdp{\mfg_h^n,\phi^n_h} = \bdp{\mfg_h^e,\phi_h^e} + \bdp{(\mfg_h^{\delta})^n,(\phi_h^{\delta})^n},\label{S3:eq:25}
\end{equation}
where $\bdp{(\mfg_h^{\delta})^n,(\phi_h^{\delta})^n}$ satisfy
\begin{equation}
	\bdp{(\mfg_h^{\delta})^n,(\phi_h^{\delta})^n} = \mD_1\bdp{4\pi G(\rho_h^{\delta})^n} = \mD_1\bdp{4\pi G\bdp{\rho_h^{n} - \rho_h^e}}.
\end{equation}
The time derivative term $\bdp{\dot{\mfg_h^n},\dot{\phi^n_h}}$ is given by the modified Poisson solver $\mD_2$ \eqref{Poisson2}:
\begin{equation}
	\bdp{\dot{\mfg_h^n},\dot{\phi_h^n}} = \mD_2\bdp{4\pi G\nabla\cdot(\rho\mfu)^n_h}. \label{S3:eq:26}
\end{equation}
The total non-gravitational energy $E_h^n$ is given by:
\begin{equation}
	E_h^n = (E_{tot})_h^n - \mfP\bdp{\dfrac{1}{2}\rho_h^n\phi_h^n}.\label{S3:eq:27}
\end{equation}
\item \label{Step2}\textbf{Step 2.} Compute the modified HLLC flux. The modified HLLC flux is given by \eqref{S3:eq:13}. We then get the HLLC flux at time $t^n$.
\begin{equation}
	\widehat{\mfF}^n_{\mfn_{\mathcal{E}, K}} = \mfF^{\text{hllc}} \left( 
	\frac{p_h^{e, \star}}{p_h^{e, \text{int}(K)}} \mfU_h^{n,\text{int}(K)}, 
	\frac{p_h^{e, \star}}{p_h^{e, \text{ext}(K)}} \mfU_h^{n,\text{ext}(K)}; 
	\mfn_{\mE, K} 
	\right) = \left(\widehat{f}^{n,[1]},\widehat{\bm{f}}^{n,[2],T},\widehat{f}^{n,[3]}\right)_{\mfn_{\mE, K}}^T.\label{S3:eq:28}
\end{equation}
\item  \textbf{Step 3.}\label{Step3} Compute the energy flux in the internal of an element $K$ and on the face $\mE$ based on \eqref{S3:eq:25}, \eqref{S3:eq:26} and \eqref{S3:eq:28}. In the internal of element, the energy flux is given by:
\begin{equation}
	\mfF^n_g = \dfrac{1}{8\pi G}\bdp{\phi^n_h\dot{\mfg^n_h} -\dot{\phi^n_h}\mfg^n_h} + (\rho\mfu)^n_h\phi^n_h.
\end{equation}
On the face of element, the energy flux is given by 
\begin{equation}
	(\widehat{\mfF_g})^n_{\mfn_{\mathcal{E},K}} = \dfrac{1}{8\pi G}\left(\widehat{\phi^n_h}\widehat{\dot{\mfg^n_h}} - \widehat{\dot{\phi^n_h}}\widehat{\mfg^n_h}\right)\cdot \mfn_{\mathcal{E},K}+ f^{n,[1]}_{\mfn_{\mE,K}} \widehat{\phi^n_h}.
\end{equation}
\item \label{Step4}\textbf{Step 4.} Evolve to obtain $(\rho_h^{n+1},(\rho\mfu)_h^{n+1},(E_{tot})^{n+1}_h)$. From the approximations of source term \eqref{S3:eq:24} and the semi-discrete form \eqref{S3:eq:21}, we have:
\begin{subequations}
\label{FullyDiscreteStructure}
\begin{equation}
	\int_K\rho_h^{n+1}v\dd{\mfx} = \int_K\rho_h^{n}v\dd{\mfx} + \Delta t \bdp{\int_K\mfF^{n,[1]}_h\cdot\nabla v\dd{\mfx} - \sum_{\mE\in \partial K}\int_{\mE}\widehat{f}^{n,[1]}_{\mfn_{\mE,K}}v^{\text{int}(K)}\dd{s}},
\end{equation}
\begin{equation}
	\int_K(\rho\mfu)_h^{n+1}v\dd{\mfx} = \int_K(\rho\mfu)_h^{n}v\dd{\mfx} + \Delta t \bdp{\int_K\mfF^{n,[2]}_h\cdot\nabla v\dd{\mfx} - \sum_{\mE\in \partial K}\int_{\mE}\widehat{\bm{f}}^{n,[2]}_{\mfn_{\mE,K}}v^{\text{int}(K)}\dd{s} + \left<\mfS^{[2],n},v\right>_{K}},
\end{equation}
\begin{equation}
\begin{split}
	\int_K(E_{tot})_h^{n+1}v\dd{\mfx} = \int_K(E_{tot})_h^{n}v\dd{\mfx} &+ \Delta t \bdp{\int_K\mfF^{n,[3]}_h\cdot\nabla v\dd{\mfx} - \sum_{\mE\in \partial K}\int_{\mE}\widehat{f}^{n,[3]}_{\mfn_{\mE,K}}v^{\text{int}(K)}\dd{s}} \\
	& + \Delta t\bdp{\int_K(\mfF_g)^{n}_h\cdot\nabla v\dd{\mfx} - \sum_{\mE\in \partial K}\int_{\mE}(\widehat{\mfF_g})^n_{\mfn_{\mE,K}}v^{\text{int}(K)}\dd{s}}.
\end{split} \label{Etotupdate}
\end{equation}
\end{subequations}
\end{itemize}



\begin{rem}
	The projection procedure in \eqref{S3:eq:33} and \eqref{S3:eq:27} is essential for preserving the WB property. Since the initial total energy $(E_{tot})_h^0$ is projected onto the DG space $V_h^k$, the same projection procedure is required during the subsequent evolution to ensure consistency. This guarantees the pressure $p_h^n$ remains invariant. As shown in Section \ref{sec4}, omitting this procedure leads to the loss of the WB property.
\end{rem}
\subsection{High order time discretizations}\label{3.4} \hfill \\

When first order Euler-forward method is considered, both the standard LDG scheme \eqref{FullyDiscreteStandard} and the structure-preserving LDG scheme \eqref{FullyDiscreteStructure} can be written in a compact form:
\begin{equation}
	\mfU_h^{n+1} = \mfU_h^{n} + \Delta t\mathcal{L}(\mfU_h^{n},t^n);\qquad \mfW_h^{n+1} = \mfW_h^{n} + \Delta t\mathcal{H}(\mfW_h^{n},t^n),
\end{equation}
where $\mathcal{L}(\cdot,\cdot)$ and $\mathcal{H}(\cdot,\cdot)$ represent the spatial discretization operators associated with the respective schemes, and $\Delta t$ is the time step size. When combined with high order strong-stability-preserving  Runge-Kutta (SSP-RK) time discretizations \cite{Shu1988TVD_RK}, both schemes are formulated as follows:

For the second order SSP-RK method:
\begin{equation*} 
\begin{split} 
	\mfU_h^{(1)} &= \mfU_h^{n} + \Delta t\mathcal{L}(\mfU_h^{n},t^n),\\
	\mfU_h^{n+1} &= \dfrac{1}{2}\mfU_h^{n} + \dfrac{1}{2}\bdp{\mfU^{(1)} + \Delta t\mathcal{L}(\mfU_h^{(1)},t^n + \Delta t)},
\end{split} \qquad 
	\begin{split} \mfW_h^{(1)} &= \mfW_h^{n} + \Delta t\mathcal{H}(\mfW_h^{n},t^n),\\ 
	\mfW_h^{n+1} &= \dfrac{1}{2}\mfW_h^{n} + \dfrac{1}{2}\bdp{\mfW^{(1)} + \Delta t\mathcal{H}(\mfW_h^{(1)},t^n + \Delta t)}.
\end{split} 
\end{equation*}

For the third order SSP-RK method:
\begin{equation*} 
\begin{split} 
	\mfU_h^{(1)} &= \mfU_h^{n} + \Delta t\mathcal{L}(\mfU_h^{n},t^n),\\
	\mfU_h^{(2)} &= \dfrac{3}{4}\mfU_h^{n} + \dfrac{1}{4}\bigg(\mfU^{(1)} + \Delta t\mathcal{L}(\mfU_h^{(1)},t^n + \Delta t)\bigg),\\ \mfU_h^{n+1} &= \dfrac{1}{3}\mfU_h^{n} + \dfrac{2}{3}\bdp{\mfU^{(2)} + \Delta t\mathcal{L}(\mfU_h^{(2)},t^n + \dfrac{1}{2}\Delta t)}, 
\end{split} \quad 
\begin{split} 
	\mfW_h^{(1)} &= \mfW_h^{n} + \Delta t\mathcal{H}(\mfW_h^{n},t^n),\\
	\mfW_h^{(2)} &= \dfrac{3}{4}\mfW_h^{n} + \dfrac{1}{4}\bdp{\mfW^{(1)} + \Delta t\mathcal{H}(\mfW_h^{(1)},t^n + \Delta t)},\\ 
	\mfW_h^{n+1} &= \dfrac{1}{3}\mfW_h^{n} + \dfrac{2}{3}\bdp{\mfW^{(2)} + \Delta t\mathcal{H}(\mfW_h^{(2)},t^n + \dfrac{1}{2}\Delta t)}.
\end{split} 
\end{equation*}

\subsection{Oscillation-eliminating approach} \label{3.4.1} \hfill \\

We now address the issue of numerical oscillations that may arise when high order methods are applied to non-smooth problems. To control the numerical oscillations, we adopt the OE technique in \cite{2024PengOEDG}. Here, we give a brief introduction to the OE technique, for more details of this method we refer the readers to \cite{2024PengOEDG}.

The semi-discrete OEDG method can be expressed as an ordinary equation system with operator splitting:
\begin{equation*}
	\dfrac{\dd{\mfU}}{\dd{t}} + \mfL_f(\mfU) + \mathbf{\Sigma}(\mfU)\mfU = 0\Longrightarrow\dfrac{\dd{\mfU}}{\dd{t}} = -\mfL_f(\mfU)
	\text{ and }\dfrac{\dd{\mfU}_{\sigma}}{\dd{t}} = -\mathbf{\Sigma}(\mfU)\mfU_{\sigma}.
\end{equation*}
Here $\mathbf{\Sigma}$ is the damping term in the OE method and has scale-invariant and evolution-invariant properties. It is worth noting that the damping ODE is quai-linear while $\mathbf{\Sigma}(\mfU)$ is fixed. The general high order RK or multi-step discretization can both combined with OE method to avoid certain numerical oscillation. We take the $s-$stage explicit RK method combined with the OE technique as an example:
\begin{equation*}
	\begin{split}
		\mathbf{U}^{n,0}_h &= \mathbf{U}^{n}_{\sigma} = \mathbf{U}^{n}_h,\\
		\mathbf{U}^{n,l+1}_h &= \sum_{0\leq m\leq l}\bdp{a_{lm}\mathbf{U}^{n,m}_{\sigma} - \Delta t b_{lm}\mfL_f\bdp{\mfU^{n,m}_{\sigma}}},\quad l = 0,1,\cdots,s-1,\\
		\mathbf{U}^{n+1}_h &= \mathbf{U}^{n,s}_{\sigma},
	\end{split}
\end{equation*} 
where $\mfU^{n,m}_{\sigma}$ denote the function after the OE damping technique applied on $\mfU^{n,m}_{h}$, i.e.
\begin{equation*}
	\mathbf{U}^{n,l+1}_{\sigma} = \mathcal{F}_{\Delta t}\mathbf{U}^{n,l+1}_{h},\quad  l = 0,1,\cdots,s-1.
\end{equation*}
Here, $\Delta t$ is the time-step size, and $\mathcal{F}_{\Delta t}$ denotes the OE technique for notational convenience. Actually, we define $\mathcal{F}_{\Delta t}\bdp{\mfU_{h}}(\mfx) = \mathbf{U}_{\sigma}(\mfx,\Delta t)$ with $\mathbf{U}_{\sigma}(\mfx,\hat{t})$ being the solution of the following initial value problem:
\begin{equation}
	\left\{
	\begin{aligned}
		&\dfrac{\dd}{\dd{\hat{t}}}\int_{K}\mfU_{\sigma}\cdot \mfv\dd{\mfx} + \sum_{m=0}^k \vartheta_{K}^m\bdp{\mfU_{h}}\int_K\bdp{\mfU_{\sigma} - \mfP^{m-1}\mfU_{\sigma}}\cdot \mfv\dd{\mfx} = 0,\quad \forall \mfv\in[\mathbb{P}^k(K)]^N,\\
		&\mfU_{\sigma}(\mfx,0) = \mfU_{h}(\mfx).
	\end{aligned}
	\right. \label{S3:eq:34}
\end{equation}
The coefficient $\vartheta_K^m(\mfU_h)$ is defined as:
\begin{equation*}
	\vartheta_K^m(\mfU_h) = \sum_{e\in\partial K}\beta_e\dfrac{\sigma^m_{e,K}(\mfU_h)}{h_{e,K}}.
\end{equation*}
$\beta_e$ is a suitable estimate of the local maximum wave speed in the direction $\mfn_e$ on the interface $e$ of the element $K$, which is set as the spectral radius of $\sum_{i=1}^{d}n_e^{(i)}\pdrv{\mfF_i}{\mfu}\Big|_{\mfu = \mfu_K}$, where $n_e^{(i)}$ is the $i$th component of $\mfn_e$. $h_{e,K}$, $\sigma^m_{e,K}(\mfU_h)$ is defined as:
\begin{equation*}
	h_{e,K} = \sup_{\mfx\in K}\text{dist}(\mfx,e),\qquad 
	\sigma^m_{e,K}(\mfU_h) = \max_{1\leq i\leq N}\sigma^m_{e,K}(u^{(i)}_h),
\end{equation*}
while $u^{(i)}_h$ denoting the $i$th component of $\mfu_h$, and
\begin{equation*}
	\sigma^m_{e,K}(u^{(i)}_h) = \begin{cases}
		0, & \text{if } u^{(i)}_h\equiv \overline{u}^{(i)}_h,\\
	\dfrac{(2m+1)h_{e,K}^m}{2(2k-1)m!}\sum\limits_{|\alpha|=m}\dfrac{\frac{1}{|e|}\int_e\left|\jump{\partial^{\alpha}u^{(i)}_h}_e\right|\dd{S}}{||u^{(i)}_h- \text{avg}(u^{(i)}_h)||_{L_\infty}(\Omega)}, & \text{otherwise}.
	\end{cases}
\end{equation*}
The multi-index $\alpha = (\alpha_1,\cdots,\alpha_d)$ with $|\alpha| = \sum_i\alpha_i$, and the derivative $\partial^{\alpha}u^{(i)}_h$ is defined by:
\begin{equation*}
	\partial^{\alpha}u^{(i)}_h = \dfrac{\partial^{|\alpha|}}{\partial x_1^{\alpha_1}\cdots x_d^{\alpha_d}}u^{(i)}_h.
\end{equation*}
The damping equation \eqref{S3:eq:34} can be explicitly implemented as:
\begin{equation*}
	\mathcal{F}_{\Delta t}\mfU^{n,l+1}_{h} = \mfU^{(0)}_{K}\phi_K^{(0)}(\mfx) + \sum_{j=1}^{k}e^{-\Delta t\sum_{m=0}^{j}\vartheta_K^m(\mfU^{n,l+1}_{h})}\sum_{|\alpha| = j}\mfU^{(\alpha)}_{K}\phi^{(\alpha)}_K(\mfx),
\end{equation*}
where $\mfU^{(\alpha)}_{K}$ denote the moment coefficient based on the orthogonal basis $\bdc{\phi_K^{\alpha}}_{|\alpha|\leq k}\subset \mathbb{P}^k(K)$. It is easy to find that $\mfU^{(0)}_{K} = \text{avg}(\mfU^{n}_{K})$, thus the OE procedure will not influence the average value of the element $K$. To design OE method for WB scheme, take third order SSP-RK as example, the fully discrete scheme is outlined as follows:
\begin{equation*}
	\begin{aligned} 
		\tilde{\mfW}_h^{(1)} &= \mfW_h^{n} + \Delta t\mathcal{H}(\mfW_h^{n},t^n),
		&
		\mfW_h^{(1)} = \mfW_h^e + \mathcal{F}_{\Delta t}(\tilde{\mfW}_h^{(1)} - \mfW_h^e),\\
		\tilde{\mfW}_h^{(2)} &= \dfrac{3}{4}\mfW_h^{n} + \dfrac{1}{4}\bdp{\mfW^{(1)} + \Delta t\mathcal{H}(\mfW_h^{(1)},t^n + \Delta t)},
		&
		\mfW_h^{(2)} = \mfW_h^e + \mathcal{F}_{\Delta t}(\tilde{\mfW}_h^{(2)} - \mfW_h^e),\\ 
		\tilde{\mfW}_h^{n+1} &= \dfrac{1}{3}\mfW_h^{n} + \dfrac{2}{3}\bdp{\mfW^{(2)} + \Delta t\mathcal{H}(\mfW_h^{(2)},t^n + \dfrac{1}{2}\Delta t)},
		&
		\mfW_h^{n+1} = \mfW_h^e + \mathcal{F}_{\Delta t}(\tilde{\mfW}_h^{n+1} - \mfW_h^e). \\
	\end{aligned} 
\end{equation*}
Specifically, suppose that $\tilde{\mfW}_h\big|_K \in V_h^k$ and
\begin{equation*}
	\tilde{\mfW}_h\big|_K = \mfW_h^e + \mfW_h^{\delta} =\sum_{j=0}^{k}\sum_{|\alpha| = j}\mfW^{e,(\alpha)}_{K}\phi^{(\alpha)}_K(\mfx) + \sum_{j=0}^{k}\sum_{|\alpha| = j}\mfW^{\delta,(\alpha)}_{K}\phi^{(\alpha)}_K(\mfx).
\end{equation*}
Then we apply the OE technique on the perturbation part 
\begin{equation}
	\mfW_h = \mfW_h^e + \mathcal{F}_{\Delta t}(\mfW_h^{\delta}) = \sum_{j=0}^{k}\sum_{|\alpha| = j}\mfW^{e,(\alpha)}_{K}\phi^{(\alpha)}_K(\mfx) + \mfW^{\delta,(0)}_{K}\phi_K^{(0)}(\mfx) + \sum_{j=1}^{k}e^{-\Delta t\sum_{m=0}^{j}\vartheta_K^m(\tilde{\mfW}_h)}\sum_{|\alpha| = j}\mfW^{\delta,(\alpha)}_{K}\phi^{(\alpha)}_K(\mfx).\label{damping}
\end{equation}
It holds that if the damping operator $\mathcal{F}_{\Delta t}$ remains high order accuracy, then
\begin{equation*}
	||\mfW_h - \tilde{\mfW}_h|| = ||\mfW_h^{\delta} - \mathcal{F}_{\Delta t}(\mfW_h^{\delta})|| = O(\Delta x^{k+1}).
\end{equation*}
Thus the modified damping procedure \eqref{damping} will not destroy the accuracy of our structure-preserving LDG scheme.

\section{Structure-preserving properties}
\label{sec4}
\setcounter{equation}{0}
\setcounter{figure}{0}
\setcounter{table}{0}
Based on the flowchart of the structure-preserving LDG scheme, we now prove the WB and TEC properties of our proposed structure-preserving LDG scheme.


\begin{thm}\label{thm1}
	The numerical method described in Section~\ref{3.3} together with the flowchart in Section~\ref{3.3.4}, preserves the explicitly known equilibrium state $ \mfu^0=\mathbf{0}$, $p^0=-\rho^0\nabla \phi^0$, $\Delta  \phi^0 = 4\pi G\rho^0$. Numerically, if the initial conditions are given by the steady states $\rho^0  = \rho^e$, $\mfu^0 = \mfu^e = \mathbf{0}$, $p^0 = p^e$, and $\phi^0 = \phi^e$, then we have $\rho_h^n  = \rho_h^e$, $\mfu_h^e = \mathbf{0}$, $p_h^n = p_h^e$, and $\phi_h^n = \phi_h^e(\mfx)$ for all $n$, thereby confirming the well-balanced property of the scheme.
\end{thm}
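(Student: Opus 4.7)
The plan is to argue by induction on the time index $n$, showing that one Euler-forward stage of \eqref{FullyDiscreteStructure} preserves the equilibrium data; the SSP-RK case then follows because every substage is an affine combination of Euler-forward updates acting on states that each satisfy the equilibrium. The base case $n=0$ is immediate from the initialization \eqref{S3:eq:32}--\eqref{S3:eq:33}: the $L^2$-projection $\mfP$ reproduces the equilibrium triple $(\rho_h^e,\mathbf{0},E_h^e)$, and the Poisson solver $\mD_1$ returns $\phi_h^e$ from input $4\pi G\rho_h^e$.

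For the inductive step, suppose the equilibrium holds at $t^n$. I first collapse all auxiliary variables. Because $\rho_h^{\delta,n}=\rho_h^n-\rho_h^e=0$ and $\mD_1$ is linear, $(\mfg_h^{\delta,n},\phi_h^{\delta,n})=(\mathbf{0},0)$, so $\mfg_h^n=\mfg_h^e$ and $\phi_h^n=\phi_h^e$. Similarly, $(\rho\mfu)_h^n=\mathbf{0}$ kills the volume integrand in \eqref{Poisson2}, while the boundary piece $\widehat{f}^{n,[1]}$ also vanishes (see below), so $\mD_2$ yields $(\dot{\mfg}_h^n,\dot{\phi}_h^n)=(\mathbf{0},0)$. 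I then evaluate the modified HLLC flux \eqref{S3:eq:13} at equilibrium interfaces: the scaling $p_h^{e,*}/p_h^{e,\text{int}(K)}$ carries zero velocity to zero velocity and sends both interior and exterior pressures to the common value $p_h^{e,*}$, so Lemma \ref{contactdiscontinuity} gives $\widehat{\mfF}^n_{\mfn_{\mE,K}}=(0,\,p_h^{e,*}\mfn_{\mE,K}^T,\,0)^T$, confirming in particular that $\widehat{f}^{n,[1]}=0$.

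With these reductions, the density update has zero volume flux and zero interface flux, hence $\rho_h^{n+1}=\rho_h^n$. For the total energy, $(E+p)\mfu$ vanishes because $\mfu_h^n=\mathbf{0}$, and the gravity flux $\mfF_g$ in \eqref{S3:eq:10} vanishes because each of its three summands carries a factor of $\dot\phi_h^n$, $\dot{\mfg}_h^n$, or $(\rho\mfu)_h^n$; the corresponding numerical traces vanish for the same reason, so $(E_{\rm tot})_h^{n+1}=(E_{\rm tot})_h^n$. The decisive calculation is the momentum cancellation: under the induction hypothesis, the first and third summands of the source discretization \eqref{S3:eq:24} vanish (since $\rho_h/\rho_h^e\equiv 1$ and $\mfg_h^{\delta,n}=\mathbf{0}$) and the prefactor $\overline{(\rho_h)_K}/\overline{(\rho_h^e)_K}$ collapses to unity, leaving only the numerical integration-by-parts pair $\sum_\mE\int_\mE p_h^{e,*} v^{\text{int}(K)}\mfn_{\mE,K}\,\dd{s}-\int_K p_h^e\nabla v\,\dd{\mfx}$, where the boundary quadrature employs the same averaged $p_h^{e,*}$ used in the modified HLLC flux. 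This cancels exactly the pressure contribution $-\sum_\mE\int_\mE p_h^{e,*}v^{\text{int}(K)}\mfn_{\mE,K}\,\dd{s}+\int_K p_h^e\nabla v\,\dd{\mfx}$ from the flux terms, giving $(\rho\mfu)_h^{n+1}=\mathbf{0}$.

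To close the induction, I recover the pressure via $E_h^{n+1}=(E_{\rm tot})_h^{n+1}-\mfP(\tfrac{1}{2}\rho_h^{n+1}\phi_h^{n+1})$ exactly as in Step 1 of the flowchart; since $\rho_h^{n+1}=\rho_h^n$, $\phi_h^{n+1}=\phi_h^n$, and $(E_{\rm tot})_h^{n+1}=(E_{\rm tot})_h^n$, the same projection gives $E_h^{n+1}=E_h^n=E_h^e$, and the EOS with zero velocity yields $p_h^{n+1}=p_h^e$. The main obstacle is precisely the momentum cancellation: it hinges on the careful alignment between the modified HLLC pressure $p_h^{e,*}$ and the boundary quadrature of the source term—the central design choice distinguishing \eqref{S3:eq:24} from a naive integration-by-parts discretization. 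A secondary subtlety is that the same projection $\mfP$ must be applied in both the initialization \eqref{S3:eq:33} and the pressure recovery \eqref{S3:eq:27}, otherwise the equilibrium would be polluted at each step—the point made in the remark following the flowchart.
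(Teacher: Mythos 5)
Your proposal is correct and follows essentially the same route as the paper's own proof: induction on the time level, collapsing the perturbation and time-derivative Poisson solves to zero, invoking the contact property of the modified HLLC flux to reduce the interface fluxes to $(0,p_h^{e,*}\mfn^T,0)^T$, and verifying the momentum cancellation between the flux terms and the source discretization \eqref{S3:eq:24}, with the pressure recovered through the same projection $\mfP$ as in the initialization. Your explicit handling of the SSP-RK stages as convex combinations of Euler-forward steps and your remark that the source-term boundary quadrature must use the same $p_h^{e,*}$ as the modified flux are consistent with (and slightly more explicit than) the computation of the terms $\mfI$--$\mfI\mfV$ in the paper's appendix.
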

\begin{proof}
	The proof is provided in Appendix~\ref{pro:WB}.
\end{proof}


\begin{thm}
	The numerical method described in Section \ref{3.3} together with the flowchart in Section \ref{3.3.4} conserves the total energy:
	\begin{equation}
		\int_{\Omega}(E_{tot})_h^{n+1}\dd{\mfx} = \int_{\Omega}(E_{tot})_h^{n}\dd{\mfx},
	\end{equation}
	under either compactly supported boundary conditions or periodic boundary conditions.
\end{thm}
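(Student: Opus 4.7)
The plan is to test the discrete total energy equation \eqref{Etotupdate} with the constant function $v \equiv 1$ on every element $K$, sum over $K \in \mT_h$, and show that every nonzero contribution either vanishes (volume gradient terms) or telescopes (interior face terms) or is killed by the boundary condition.

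First I would set $v = 1$ in \eqref{Etotupdate} for each element $K$. Because $\nabla v \equiv 0$, both volume integrals
\[
\int_K \mfF_h^{n,[3]}\cdot \nabla v\,\dd{\mfx} \qand \int_K (\mfF_g)_h^n \cdot \nabla v\,\dd{\mfx}
\]
drop out immediately. What is left after summing over $K$ is
\[
\sum_{K\in\mT_h}\int_K (E_{tot})_h^{n+1}\dd{\mfx} - \sum_{K\in\mT_h}\int_K (E_{tot})_h^{n}\dd{\mfx} = -\Delta t\sum_{K\in\mT_h}\sum_{\mE\in\partial K}\int_\mE \Bigl(\widehat{f}^{n,[3]}_{\mfn_{\mE,K}} + (\widehat{\mfF_g})^n_{\mfn_{\mE,K}}\Bigr)\dd{s}.
\]

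Next I would argue that every interior face $\mE = \partial K^+ \cap \partial K^-$ contributes zero to the right-hand side. For this I need the two numerical fluxes to be single-valued on $\mE$ in the sense of the standard anti-symmetry $\widehat{f}^{n,[3]}_{\mfn_{\mE,K^+}} = -\widehat{f}^{n,[3]}_{\mfn_{\mE,K^-}}$, and likewise for $(\widehat{\mfF_g})^n_{\mfn_{\mE,K}}$. The first identity follows from the fact that the modified HLLC flux \eqref{S3:eq:13} depends only on the two traces $\mfU_h^{\text{int}(K)},\mfU_h^{\text{ext}(K)}$ and the outward normal, together with the standard HLLC property $\mfF^{\rm hllc}(\mfU_L,\mfU_R;\mfn) = -\mfF^{\rm hllc}(\mfU_R,\mfU_L;-\mfn)$. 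For the gravity flux
\[
(\widehat{\mfF_g})^n_{\mfn_{\mE,K}} = \dfrac{1}{8\pi G}\Bigl(\widehat{\phi_h^n}\,\widehat{\dot{\mfg_h^n}} - \widehat{\dot{\phi_h^n}}\,\widehat{\mfg_h^n}\Bigr)\cdot \mfn_{\mE,K} + \widehat{f}^{n,[1]}_{\mfn_{\mE,K}}\,\widehat{\phi_h^n},
\]
I would check from \eqref{LDGflux} that $\widehat{\phi_h}$, $\widehat{\mfg_h}$, $\widehat{\dot{\phi_h}}$, $\widehat{\dot{\mfg_h}}$ are all independent of the choice of $K^\pm$ (the averages $\av{\cdot}$ are obviously symmetric and the jumps $\jump{\cdot}$ depend on the product $\phi^+\mfn^+ + \phi^-\mfn^-$, which is unchanged under swapping labels). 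Thus the first term flips sign through the dot product with $\mfn_{\mE,K}$, and the second term flips sign through the already-established anti-symmetry of $\widehat{f}^{n,[1]}$. So every interior face contribution cancels exactly.

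Finally, I would dispose of the boundary terms. Under the periodic boundary condition each outer face is identified with another outer face with opposite outward normal, so the same cancellation argument applies. Under compactly supported boundary conditions the traces $\rho_h,(\rho\mfu)_h,\phi_h,\dot{\phi_h},\mfg_h,\dot{\mfg_h}$ all vanish on $\partial\Omega$, so both $\widehat{f}^{n,[3]}$ and $(\widehat{\mfF_g})^n$ vanish there as well. Combining these facts yields $\sum_K\int_K (E_{tot})_h^{n+1}\dd{\mfx} = \sum_K\int_K (E_{tot})_h^n\dd{\mfx}$, which is the desired identity at the level of one forward-Euler step. The extension to SSP-RK(2) and SSP-RK(3) is immediate because each RK stage is an affine combination of forward-Euler updates, and total mass of $(E_{tot})_h$ is preserved under such convex combinations. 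The only step that requires genuine care is verifying the anti-symmetry of the composite gravity flux in a way consistent with the modified Poisson solver $\mD_2$ from \eqref{Poisson2}; once $\widehat{\dot{\phi_h}}$ and $\widehat{\dot{\mfg_h}}$ are confirmed to be single-valued, the rest is a bookkeeping exercise.
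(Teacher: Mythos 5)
Your proposal is correct and follows essentially the same route as the paper: test \eqref{Etotupdate} with $v\equiv 1$, let the volume terms vanish, cancel the interior face contributions by the single-valuedness/anti-symmetry of $\widehat{f}^{n,[3]}$ and $(\widehat{\mfF_g})^n$, and kill the remaining boundary faces using the periodic or compactly supported boundary condition. The only difference is that you spell out the flux anti-symmetry and the RK extension explicitly, which the paper leaves implicit.
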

\begin{proof}
	The proof follows directly from \eqref{Etotupdate}. Let the test function $v = 1$ and summation from the all elements, we have that
	\begin{equation*}
		\begin{split}
			\sum_{K}\int_K(E_{tot})_h^{n+1}\dd{\mfx} &= \sum_{K}\int_K(E_{tot})_h^{n}\dd{\mfx} + \Delta t \bdp{\sum_{K}\sum_{\mE\in \partial K}\int_{\mE}\widehat{f}^{n,[3]}_{\mfn_{\mE,K}}\dd{s} + \sum_{K}\sum_{\mE\in \partial K}\int_{\mE}(\widehat{\mfF_g})^n_{\mfn_{\mE,K}}\dd{s}} \\
			&= \sum_{K}\int_K(E_{tot})_h^{n}\dd{\mfx} + \Delta t \bdp{\sum_{\mE\in \partial K\cap \mE_B}\int_{\mE}\widehat{f}^{n,[3]}_{\mfn_{\mE,K}}\dd{s} + \sum_{\mE\in \partial K\cap \mE_B}\int_{\mE}(\widehat{\mfF_g})^n_{\mfn_{\mE,K}}\dd{s}}
		\end{split}
	\end{equation*}
	For the periodic boundary conditions, we have that
	\begin{equation*}
		\sum_{\mE\in \partial K\cap \mE_B}\int_{\mE}\widehat{f}^{n,[3]}_{\mfn_{\mE,K}}\dd{s} = \sum_{\mE\in \partial K\cap \mE_B}\int_{\mE}(\widehat{\mfF_g})^n_{\mfn_{\mE,K}}\dd{s} = 0.
	\end{equation*}
	And for the compactly supported boundary conditions, we have that 
	\begin{equation*}
		\mfu\big|_{\mE_B} = 0,
	\end{equation*}
	\begin{equation*}
		\widehat{f}^{n,[3]}_{\mfn_{\mE,K}} = (\widehat{\mfF_g})^n_{\mfn_{\mE,K}} = 0, \quad\text{ on }\mE_B.
	\end{equation*}
	We can finally obtain the TEC property.
\end{proof}

\vspace{0.1cm}


\section{Numerical tests}
\label{sec5}
\setcounter{equation}{0}
\setcounter{figure}{0}
\setcounter{table}{0}

In this section, we will present our numerical schemes through a series of numerical examples. For comparison purposes, we also include the numerical results obtained by the standard LDG scheme. All simulations are implemented in C++ with double-precision. Unless otherwise specified, we use the PardisoLLT solver from Intel MKL, accessed through the Eigen interface, to solve the large sparse matrices arising from the discretized Poisson equations. The time step is
\begin{equation*}
	\Delta t = {\rm CFL}\dfrac{ h}{\max\limits_{K\in\mT_h} \left(||\overline{\mfu}_K||_{\infty} + \overline{c}_K\right)},
\end{equation*}
where $\overline{c}_K = \sqrt{\gamma\overline{p}_K/\overline{\rho}_K}$, $h = \max_{K\in\mT_h}h_K$ and $||\cdot||_{\infty}$ denotes the maximum component of average velocity. With $f\in L^r(\Omega)$ and its numerical approximation $f_h\in V_h^k\subset L^r(\Omega)$,
the numerical $L^r$ errors and convergence orders are defined as 
\begin{equation*}
    L^r\text{ error } = ||f-f_h||_{L^r(\Omega)},\qquad L^r\text{ order } = \log_2\bdp{\dfrac{||f-f_h||_{L^r(\Omega)}}{||f-f_{h/2}||_{L^r(\Omega)}}}.
\end{equation*}

\subsection{Two dimensional case}
In this subsection, we present certain numerical results for the 2D SG Euler equations. We examine certain key properties of our proposed schemes, including accuracy test, WB property test, shock capturing ability and TEC property test.
\begin{exa} {\em
\label{exam1}
({\bf{Accuracy test}}) To assess the convergence order of our structure-preserving LDG scheme, we construct the following smooth problem:
\begin{subequations}
	\begin{equation*}
		\rho(x,y,t) = \sin\bdp{\dfrac{\sqrt{2}}{2a}\bdp{x+y-2t}},\, u(x,y,t) = 1,\, v(x,y,t) = 1,
	\end{equation*}
	\begin{equation*}
		p(x,y,t) = \kappa\rho^2,\; \phi(x,y,t) = - 4\pi G a^2\rho,
	\end{equation*}
\end{subequations}
where
\begin{equation*}
	a = \sqrt{\dfrac{\kappa(n+1)\lambda^{\frac{1-n}{n}}}{4\pi G}}.
\end{equation*}
Let $\kappa = 2\pi$, $G = 1/4$, $\lambda = 1$, $n = 1$, and $\gamma = 2$. The computational domain $\Omega$ is set as 
\begin{equation*}
	\Omega = \left[\dfrac{1}{8}\sqrt{2}\pi a,\dfrac{3}{8}\sqrt{2}\pi a\right]^2.
\end{equation*}
\begin{enumerate}
	\item Without the OE technique. In Tables \ref{SPLDG:acctest:nonOE}, \ref{SPLDG:acctest:nonSBP}, and \ref{SPLDG:acctest:SBP}, we do not apply the OE technique and observe the expected convergence orders for the structure-preserving LDG scheme. A comparison of different Poisson solver for computing the time derivative of the gravitational potential \eqref{S3:eq:12} is presented in Tables \ref{SPLDG:acctest:nonSBP} and \ref{SPLDG:acctest:SBP}, where we note a degradation in accuracy when summation-by-parts is not employed.
	
\begin{table}[htbp]
	\centering
	\caption{ Example \ref{exam1}. Numerical errors and convergence orders of density $\rho$ in the structure-preserving LDG scheme without OE technique. T = 0.8. }
	\label{SPLDG:acctest:nonOE}
	\begin{tabular}{c|c|c c|c c|c c}
		\hline
		Element & Mesh & $L^1$ error & order & $L^2$ error & order & $L^\infty$ error & order \\ 
		\hline
		\multirow{4}{*}{$P^1$} & $5 \times 5$    & 1.11e-02 & --    & 6.14e-03 & --    & 1.40e-02 & --    \\
		& $10 \times 10$  & 2.46e-03 & 2.18  & 1.36e-03 & 2.18  & 3.26e-03 & 2.11  \\
		& $20 \times 20$  & 5.82e-04 & 2.08  & 3.30e-04 & 2.04  & 1.02e-03 & 1.68  \\
		& $40 \times 40$  & 1.43e-04 & 2.03  & 8.23e-05 & 2.00  & 2.90e-04 & 1.81  \\
		\hline 
		\multirow{4}{*}{$P^2$} & $5 \times 5$    & 2.16e-04 & --    & 1.46e-04 & --    & 6.17e-04 & --    \\
		& $10 \times 10$  & 2.39e-05 & 3.18 &	 1.60e-05 & 3.20	& 7.39e-05 & 3.06  \\
		& $20 \times 20$  & 2.81e-06 & 3.09	&
		1.87e-06 & 3.10	& 9.41e-06 & 2.97  \\
		& $40 \times 40$  & 3.41e-07 & 3.05	& 
		2.26e-07 & 3.04	& 1.13e-06 & 3.06 \\
		\hline 
		\multirow{4}{*}{$P^3$} & $5 \times 5$    & 1.26e-05 & --    & 7.63e-06 & --    & 2.56e-05 & --    \\
		& $10 \times 10$  & 6.11e-07 & 4.37  & 3.74e-07 & 4.35  & 1.43e-06 & 4.17  \\
		& $20 \times 20$  & 3.38e-08 & 4.18  & 2.06e-08 & 4.18  & 8.59e-08 & 4.06  \\
		& $40 \times 40$  & 2.04e-09 & 4.05  & 1.26e-09 & 4.04  & 5.43e-09 & 3.98  \\
		\hline
	\end{tabular}
\end{table}
\begin{table}[htbp] 
	\caption{Example \ref{exam1}. Numerical errors and convergence orders of density $\rho$ in the structure-preserving LDG scheme but without summation-by-parts in the Poisson equation for the time derivative of gravitational potential. T = 0.8. $Q^2$ element.}
	\label{SPLDG:acctest:nonSBP}
	\begin{tabular}{c|c c|c c|c c}
		\hline
		Mesh & $L^1$ error & order & $L^2$ error & order & $L^\infty$ error & order \\ 
		\hline
		$5 \times 5$  & 5.26e-04 & --    & 3.01e-04 & --    & 6.68e-04 & --    \\
		$10 \times 10$  & 7.50e-05 & 2.81    & 4.26e-05 & 2.82    & 8.62e-05 & 2.95    \\
		$20 \times 20$  & 1.25e-05 & 2.58  & 6.82e-06 & 2.64  & 1.16e-05 & 2.89  \\
		$40 \times 40$  & 2.46e-06 & 2.35  & 1.31e-06 & 2.38  & 1.71e-06 & 2.76  \\
		\hline
	\end{tabular}
\end{table}
\begin{table}[htbp] 
	\caption{Example \ref{exam1}. Numerical errors and convergence orders of density $\rho$ in the structure-preserving LDG scheme with summation-by-parts in the Poisson equation for the time derivative of gravitational potential. T = 0.8. $Q^2$ element.}
	\label{SPLDG:acctest:SBP}
	\begin{tabular}{c|c c|c c|c c}
		\hline
		Mesh & $L^1$ error & order & $L^2$ error & order & $L^\infty$ error & order \\ 
		\hline
		$5 \times 5$  & 1.16e-04 & --    & 6.81e-05 & --    & 2.23e-04 & --    \\
		$10 \times 10$  & 1.42e-05 & 3.03   & 8.36e-06 & 3.03  & 2.64e-05 & 3.08    \\
		$20 \times 20$  & 1.75e-06 & 3.02  & 1.03e-06 & 3.01  & 3.57e-06 & 2.89  \\
		$40 \times 40$  & 2.17e-07 & 3.01  & 1.28e-07 & 3.01  & 4.77e-07 & 2.91  \\
		\hline
	\end{tabular}
\end{table}
\item With the OE technique.  In Table \ref{SPLDG:acctest:OE}, we apply the OE technique and still observe the expected convergence orders for the structure-preserving LDG scheme.
\begin{table}[htbp]
	\centering
	\caption{ Example \ref{exam1}. Numerical errors and convergence orders of different variables in the structure-preserving LDG scheme with OE technique. T = 0.2. }
	\label{SPLDG:acctest:OE}
	\begin{tabular}{c|c|c c|c c|c c|c c}
		\hline
		 \multicolumn{2}{c|}{} & \multicolumn{4}{c|}{$\rho_h$} & \multicolumn{4}{c}{$(\rho u)_h$} \\ 
		\hline
		Element & Mesh & $L^1$ error & order &  $L^\infty$ error & order & $L^1$ error & order & $L^\infty$ error & order\\
		\hline
		\multirow{5}{*}{$P^2$} 
		& $10\times10$ & 1.99e-05 & -- & 2.33e-04 & -- & 2.99e-05 & --  & 9.76e-05 & --  \\
		& $20\times20$ & 2.09e-06 & 3.25 & 3.70e-05 & 2.66 & 3.13e-06 & 3.26 & 1.32e-05 & 2.88 \\
		& $40\times40$ & 2.63e-07 & 2.99 & 4.94e-06 & 2.90  & 3.65e-07 & 3.10 & 1.61e-06 & 3.04 \\
		& $80\times80$ & 3.55e-08 & 2.89 & 6.19e-07 & 3.00 & 4.57e-08 & 3.00 & 2.20e-07 & 2.88 \\
		& $160\times160$ & 4.68e-09 & 2.92 & 7.67e-08 & 3.01 & 5.77e-09 & 2.98 & 2.89e-08 & 2.93\\
		\hline
		\multicolumn{2}{c|}{} 
		& \multicolumn{4}{c|}{$(\rho v)_h$} 
		& \multicolumn{4}{c}{$E_{tot}$} \\
		\hline
		Element & Mesh 
		& $L^1$ error & order & $L^\infty$ error & order 
		& $L^1$ error & order & $L^\infty$ error & order \\
		\hline
		\multirow{5}{*}{$P^2$} 
		& $10\times10$  & 2.99e-05 & --  & 9.76e-05 & --  & 3.39e-05 & --  & 1.93e-04 & -- \\
		& $20\times20$  & 3.13e-06 & 3.26 & 1.32e-05 & 2.88 & 4.16e-06 & 3.03 & 2.60e-05 & 2.89 \\
		& $40\times40$  & 3.65e-07 & 3.10 & 1.61e-06 & 3.04 & 5.24e-07 & 2.99 & 3.21e-06 & 3.02 \\
		& $80\times80$  & 4.57e-08 & 3.00 & 2.20e-07 & 2.88 & 6.66e-08 & 2.98 & 3.90e-07 & 3.04 \\
		& $160\times160$ & 5.77e-09 & 2.98 & 2.89e-08 & 2.93 & 8.41e-09 & 2.99 & 4.74e-08 & 3.04 \\
		\hline
	\end{tabular}
\end{table}
\end{enumerate}
	}
\end{exa}

\begin{exa} {\em
		\label{exam2}
		({\bf{Well-balanced property}})
	Consider the following steady states:
	\begin{subequations}
		\begin{equation*}
			\theta(r) = \dfrac{1}{\pi}\int_{0}^{\pi}\cos(r\sin \alpha)\dd{\alpha},\, r = \dfrac{\sqrt{x^2+y^2}}{a},
		\end{equation*}
		\begin{equation*}
			\rho^e(x,y) = \lambda \theta^n,\; u^e = 0,\; v^e = 0,\; p^e(x,y) = \kappa(\rho^e)^2,\; \phi^e(x,y) = - \dfrac{\gamma}{\gamma-1}\kappa\bdp{\rho^e}^{\gamma-1},
		\end{equation*}\label{eq:2D}
	\end{subequations}
	where
	\begin{equation*}
		a = \sqrt{\dfrac{\kappa(n+1)\lambda^{\frac{1-n}{n}}}{4\pi G}} = \sqrt{\dfrac{\kappa}{2\pi G}}.
	\end{equation*}
	Let $\kappa = 1$, $G = 1$, $\lambda = 1$, $n = 1$, and $\gamma = 2$. The computational domain $\Omega=[-0.5,0.5]^2$.
	We set the time $T = 5.0$. We show the numerical $L^1$ errors  and $L^{\infty}$ errors computed on different grids in Table \ref{SPLDG:WBtest}. We observe that the errors are at the level of machine precision; namely, the well-balanced property is achieved.

	\begin{table}[h!]
		\centering
		\caption{Example \ref{exam2}. Numerical errors and convergence orders of different variables in the structure-preserving LDG schemes. T = 5.0. $P^2$ element.}
		\label{SPLDG:WBtest}
		\begin{tabular}{c|c|c c|c|c|c c}
			\hline
			Variables & Mesh & $L^1$ error & $L^\infty$ error & Variables & Mesh & $L^1$ error & $L^\infty$ error \\ 
			\hline 
			\multirow{4}{*}{$\rho_h$} & $10 \times 10$  & 2.00e-15 & 1.20e-14 & \multirow{4}{*}{$(\rho u)_h$} & $10\times10$  & 2.72e-14 & 1.92e-13 \\
			& $20 \times 20$  & 2.73e-15 & 1.78e-14 &  & $20 \times 20$  & 5.17e-14 & 5.04e-13 \\
			& $40 \times 40$  & 4.32e-15 & 3.86e-14 &  & $40 \times 40$  & 9.87e-14 & 1.08e-12 \\
			& $80 \times 80$  & 8.10e-15 & 7.06e-14 &  & $80 \times 80$  & 2.00e-13 & 2.36e-12 \\
			\hline
			\multirow{4}{*}{$(\rho v)_h$} & $10 \times 10$  & 2.80e-14 & 2.82e-13 & \multirow{4}{*}{$E_h$} & $10 \times 10$  & 1.58e-15 & 1.07e-14 \\
			& $20 \times 20$  & 5.19e-14 & 8.05e-13 &  & $20 \times 20$  & 2.31e-15 & 1.78e-14 \\
			& $40 \times 40$  & 1.03e-13 & 1.70e-12 &  & $40 \times 40$  & 6.42e-15 & 3.20e-14 \\
			& $80 \times 80$  & 2.00e-13 & 3.45e-12 &  & $80 \times 80$  & 1.70e-14 & 6.57e-14 \\
			\hline
		\end{tabular}
	\end{table}
}
\end{exa}

\begin{exa} {\em
	\label{exam3}
	({\bf{Pertubation to equilibrium state}}) 
	In this example, we consider two types of small perturbations to the equilibrium state on \eqref{eq:2D}. The first is a radially symmetric perturbation given by
	\begin{equation*}
		p = p^e + \mu e^{-100(x^2 + y^2)},
	\end{equation*}
	where $\mu = 0.01$. The simulation is carried out up to final time $T = 0.1$ on a uniform $200\times 200$ mesh, with transmissive boundary conditions applied. Figure \ref{fig:sym2D} illustrates the contour plots of the velocity magnitude and pressure perturbation obtained from both the structure-preserving LDG scheme and the standard LDG scheme. We observe that the standard LDG scheme fails to preserve the radial symmetry of the solution, resulting in a non-radially symmetric profile. Another kind of perturbation is an asymmetric perturbation defined by
	\begin{equation*}
		p = p^e + \mu e^{-100((x - 0.3)^2 + (y - 0.3)^2)}.
	\end{equation*}
	with $\mu = 0.1$. The simulation is performed up to final time $T = 0.1$ on a uniform $200\times 200$ mesh under transmissive boundary conditions as well. In Figure \ref{fig:nonsym2D}, we show the contour plots of the both schemes.  It is evident that the structure-preserving LDG scheme accurately captures the small perturbation, whereas the standard LDG scheme fails to do so.
	\begin{figure}[htbp]
		\centering
		\begin{minipage}[b]{0.45\textwidth}
			\centering
			\includegraphics[scale=0.4]{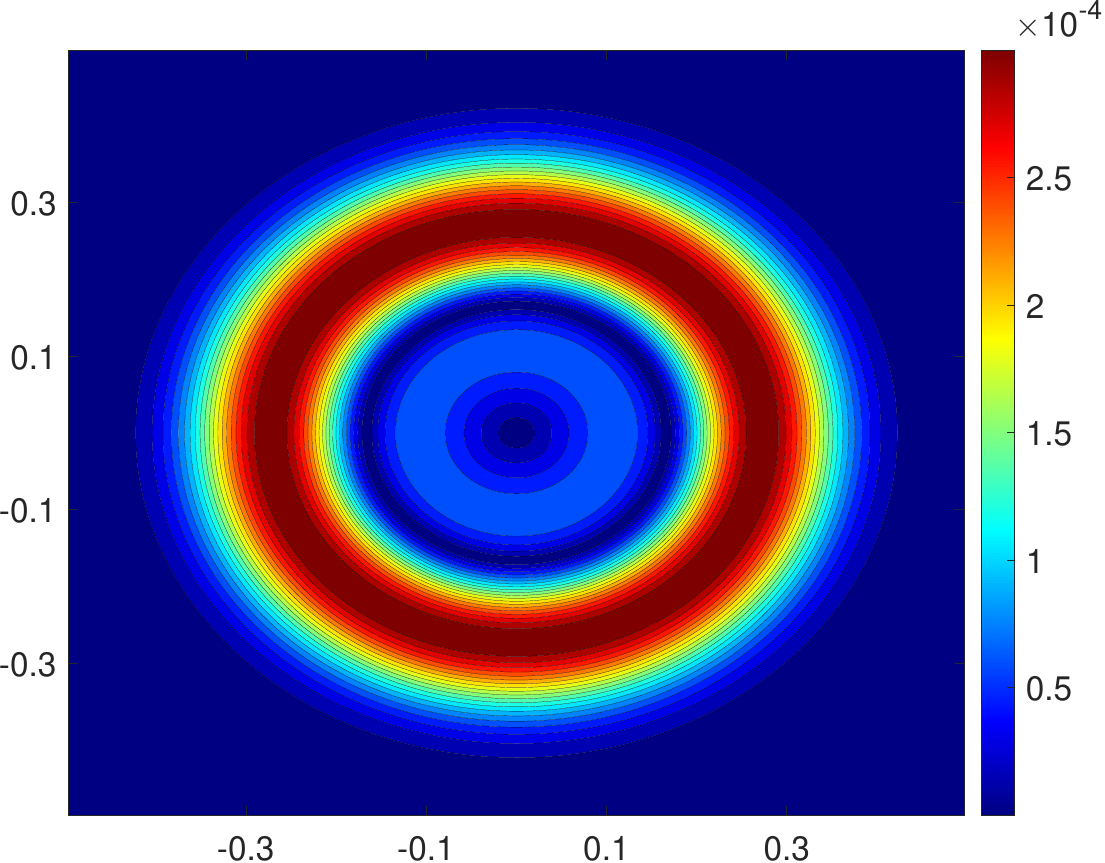}
			\caption*{(a) Structure-preserving LDG scheme: velocity magnitude $||\mfu||$.}
		\end{minipage}
		\hfill
		\begin{minipage}[b]{0.45\textwidth}
			\centering
			\includegraphics[scale=0.4]{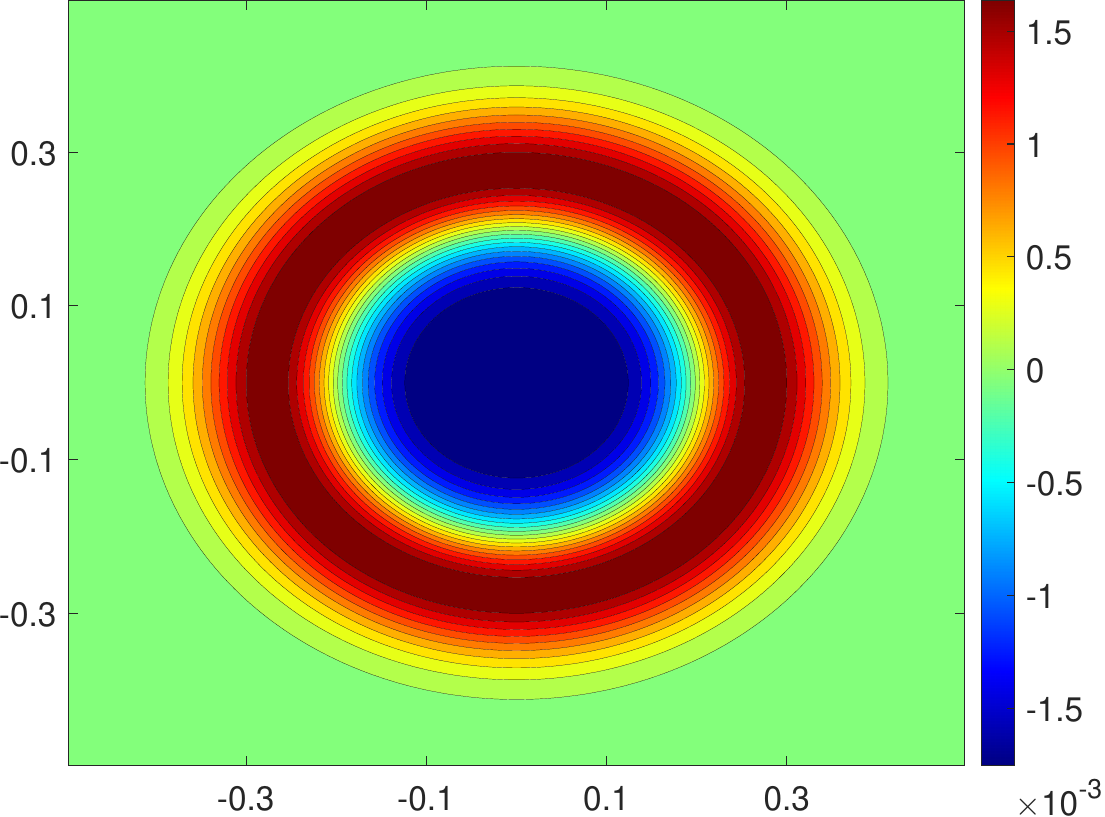}
			\caption*{(b) Structure-preserving LDG scheme: pressure perturbation.}
		\end{minipage}
		\vskip\baselineskip
		\begin{minipage}[b]{0.45\textwidth}
			\centering
			\includegraphics[scale=0.4]{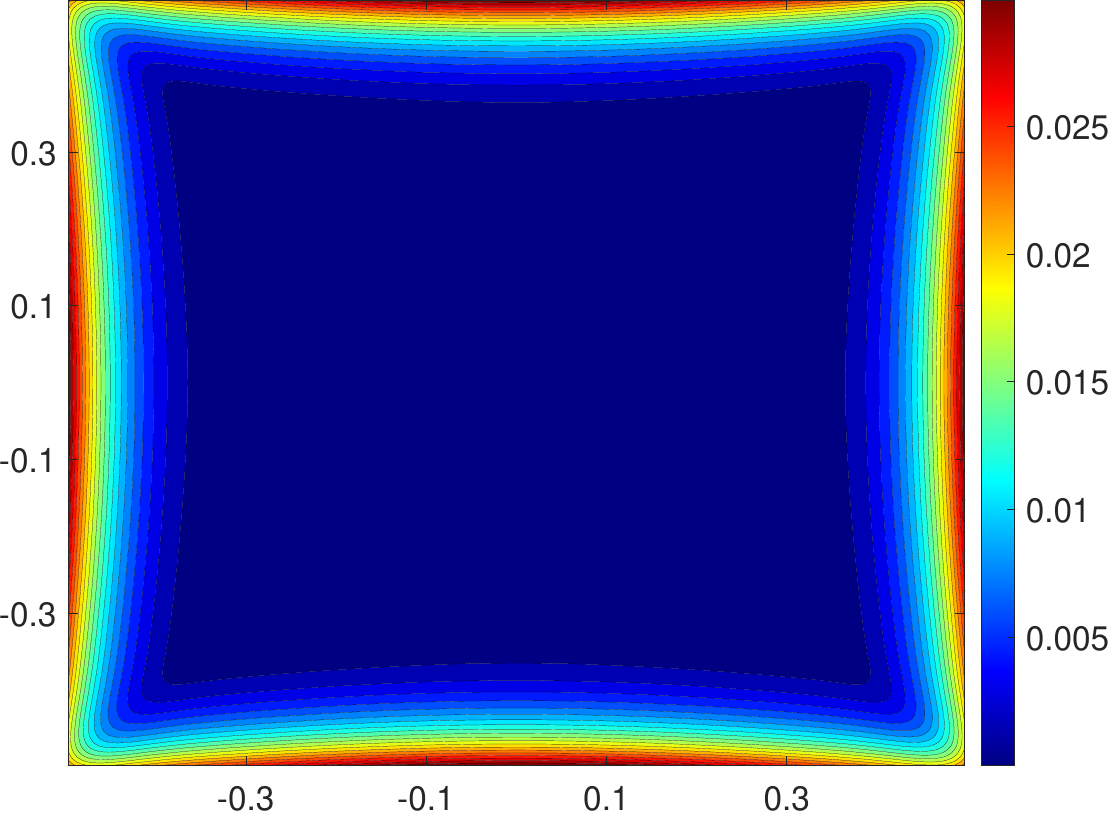}
			\caption*{(c) Standard LDG scheme: velocity magnitude $||\mfu||$.}
		\end{minipage}
		\hfill
		\begin{minipage}[b]{0.45\textwidth}
			\centering
			\includegraphics[scale=0.4]{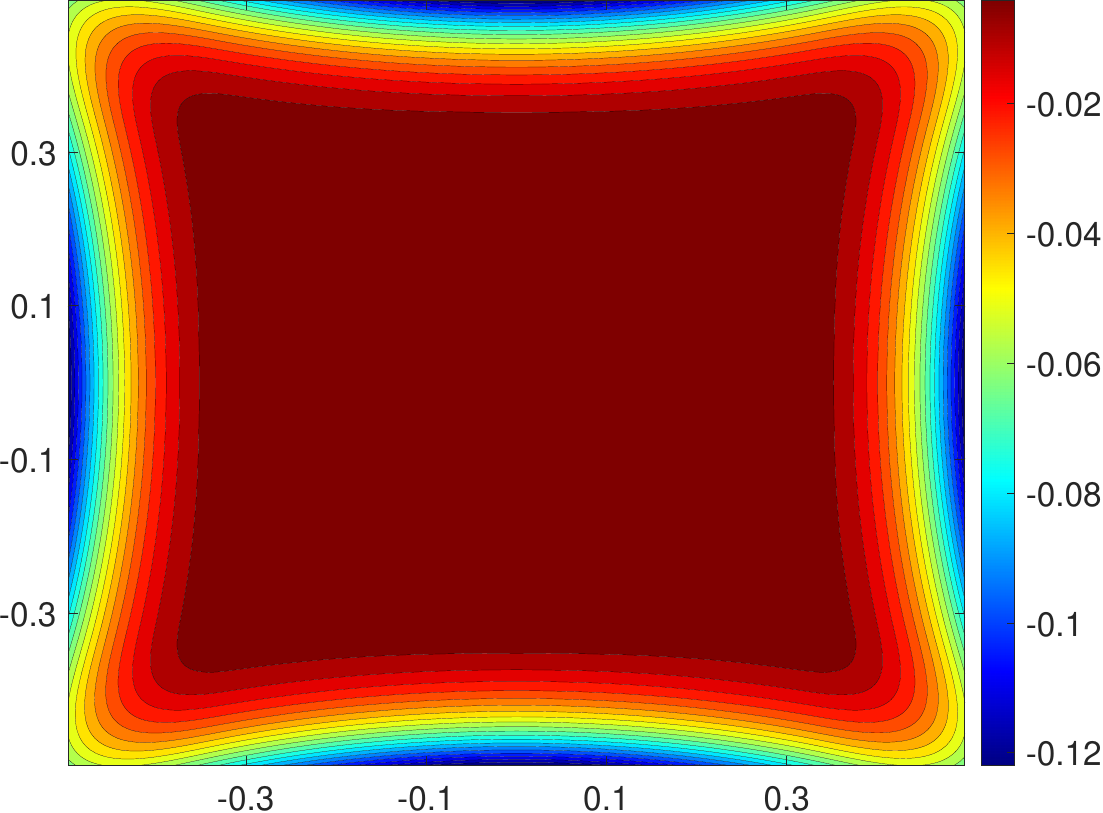}
			\caption*{(d) Standard LDG scheme: pressure perturbation.}
		\end{minipage}
		\caption{Example \ref{exam3}: The contour plots of the velocity magnitude and pressure perturbation of radially symmetric perturbation at time $T = 0.1$ by structure-preserving LDG scheme and standard LDG scheme on $200\times 200$ uniform meshes.}
		\label{fig:sym2D}
	\end{figure}
	
	\begin{figure}[htbp]
		\centering
		\begin{minipage}[b]{0.45\textwidth}
			\centering
			\includegraphics[scale=0.4]{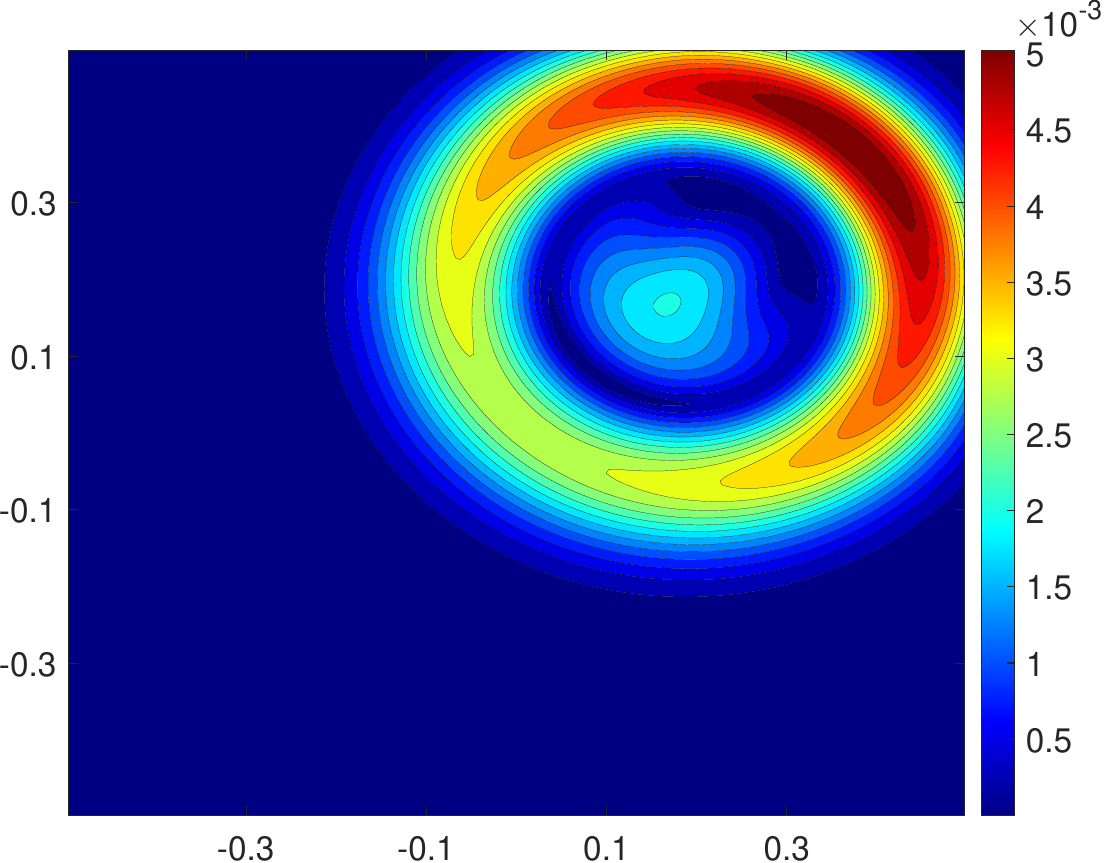}
			\caption*{(a) Structure-preserving LDG scheme: velocity magnitude $||\mfu||$.}
		\end{minipage}
		\hfill
		\begin{minipage}[b]{0.45\textwidth}
			\centering
			\includegraphics[scale=0.4]{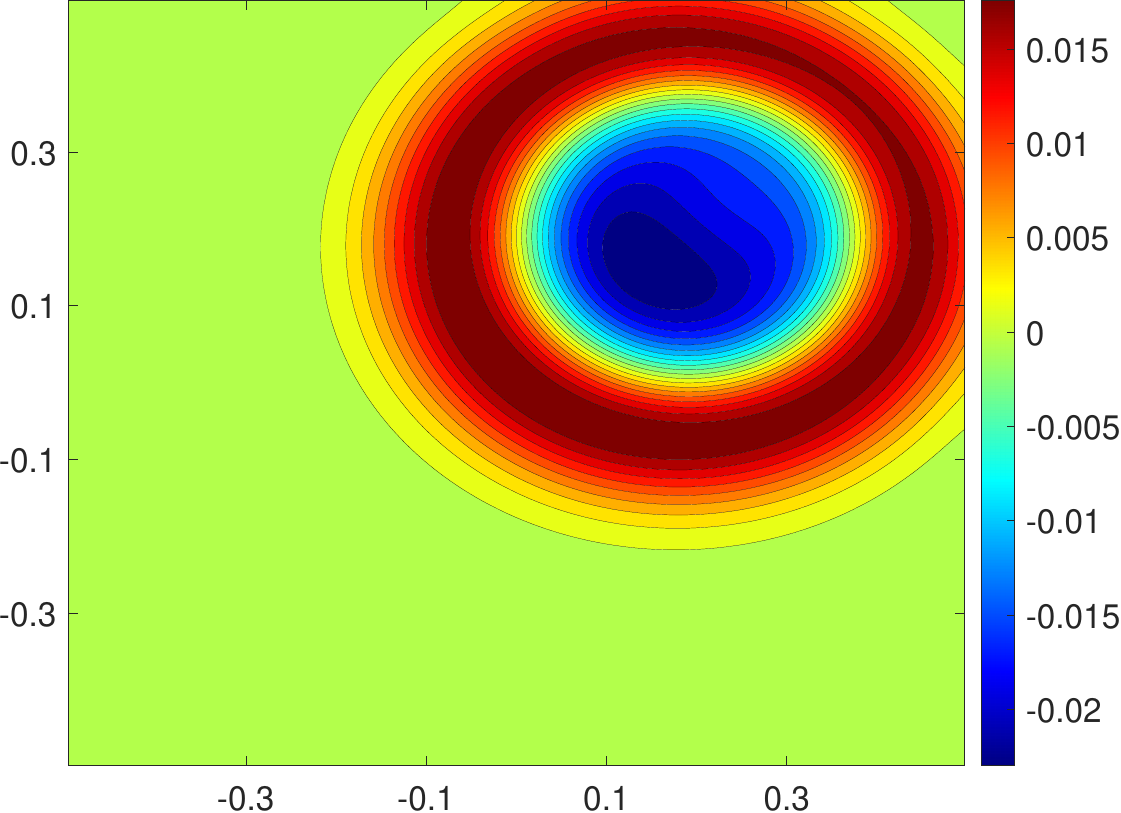}
			\caption*{(b) Structure-preserving LDG scheme: pressure perturbation.}
		\end{minipage}
		\vskip\baselineskip
		\begin{minipage}[b]{0.45\textwidth}
			\centering
			\includegraphics[scale=0.4]{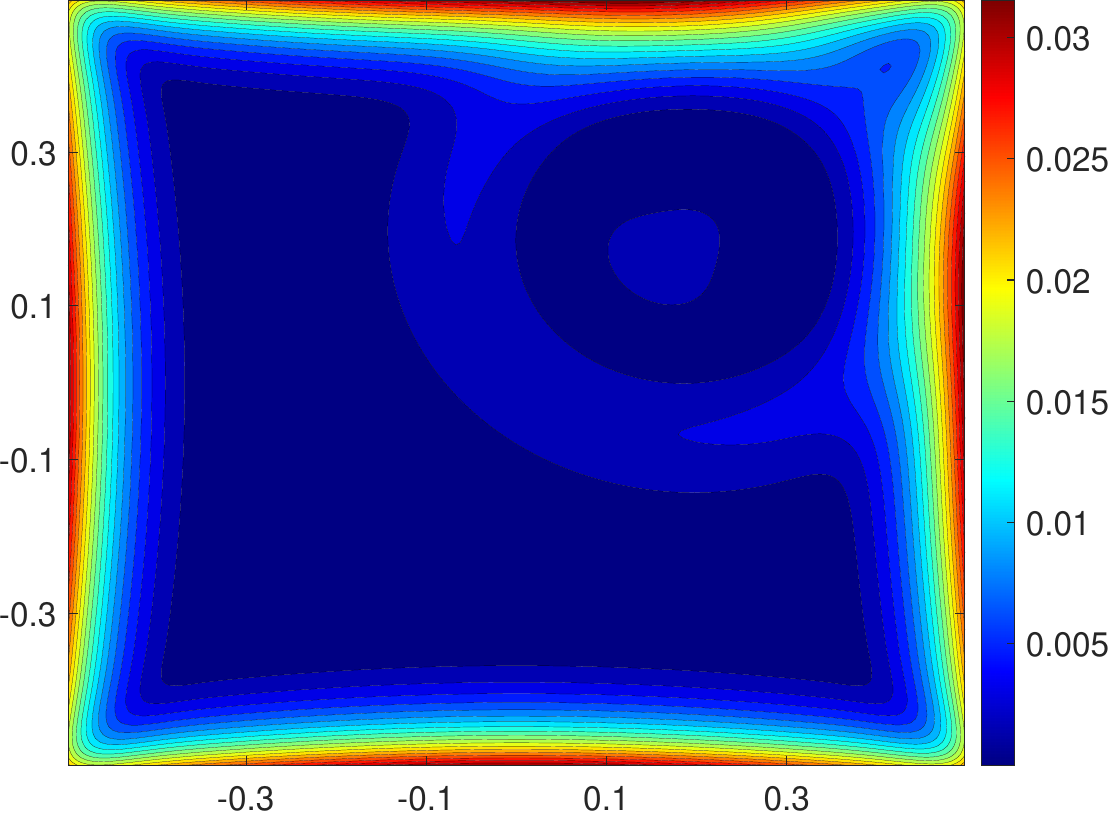}
			\caption*{(c) Standard LDG scheme: velocity magnitude $||\mfu||$.}
		\end{minipage}
		\hfill
		\begin{minipage}[b]{0.45\textwidth}
			\centering
			\includegraphics[scale=0.4]{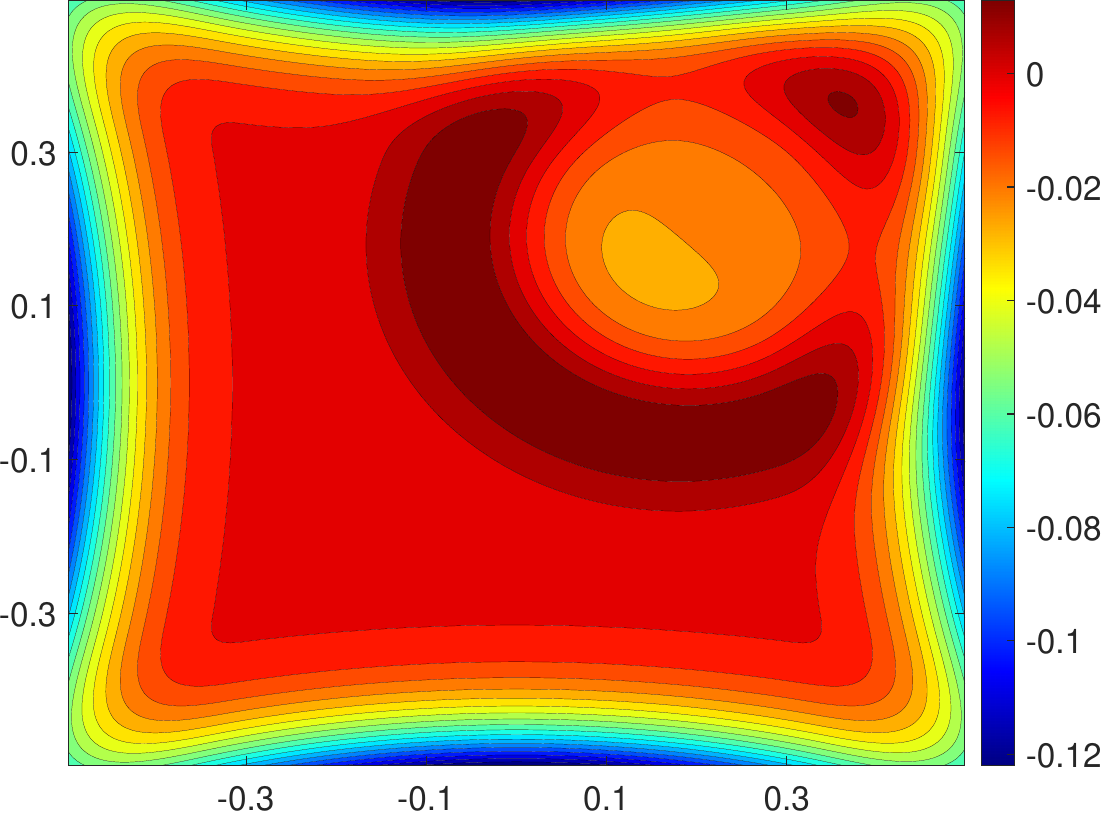}
			\caption*{(d) Standard LDG scheme: pressure perturbation.}
		\end{minipage}
		\caption{Example \ref{exam3}: The contour plots of the velocity magnitude and pressure perturbation of non-radially symmetric perturbation at time $T = 0.1$ by structure-preserving LDG schemes and standard LDG schemes on $200\times 200$ uniform meshes.}
		\label{fig:nonsym2D}
	\end{figure}
}
\end{exa}

\begin{exa} {\em
		\label{blast2D}
		({\bf{Blast wave problem}}) To further check the robustness, TEC property, and ability in shock-capturing of the algorithm, we consider the blast wave problem. We apply a huge jump around the center similar in \cite{wu2019provably,wu2021uniformly,DU2024WBPP,REN2023FVPP} to simulate the explosion:
		\begin{equation*}
			p_0(r) = p^e(r) + \left\{
			\begin{aligned}
				&100, &{\rm for}& \; r < 0.1, \\
				&0, &{\rm for}& \; r \geq 0.1.
			\end{aligned} \right.
		\end{equation*}
		In this test, both OE technique and positivity-preserving (PP) limiter in \cite{2020PPlimiterDG,wu2021uniformly,2012PPTwodimension} are applied. This PP limiter can ensure the positivity of the quadrature points in every cell. In this numerical test, we set the boundary condition as transmissive boundary conditions.
		The numerical results on a $400\times 400$ mesh at the final time $0.05$ are shown in Figure~\ref{fig:explotion2D}. We observe that our structure-preserving LDG scheme can capture shock discontinuities as well as the standard LDG scheme and does not influence the robustness and the shock-capturing ability. The radial symmetric structure of density and pressure is also preserved well.
		
		Moreover, we can observe the TEC property of our structure preserving scheme in Figure~\ref{fig:Etot}. The structure-preserving LDG scheme can conserve the total energy to $7.97\times10^{-6}$, while the standard LDG scheme produces an error of about $2.78\times10^{-4}.$ 
		
		Besides, we add the following pressure perturbation mentioned in \cite{Käppeli2019HWB} to the equilibrium state
		\begin{equation*}
			\delta p(\mfx) = 100\sum_{i = 1}^{5}\mfI_{B(\mfx_i,r)}(\mfx),
		\end{equation*}
		where $B(\mfx_i,r) = \bdc{\mfx'\in\mathbb{R}^d:||\mfx' - \mfx_i||<r}$ denotes the open ball of radius $r$ centered on $\mfx_i$ and $I_B$ is the indicator function for the set $B$, i.e.
		\begin{equation*}
			I_B(\mfx) = \begin{cases}
				1, & \text{if }\mfx\in B, \\
				0, & \text{otherwise}.
			\end{cases}
		\end{equation*}
		We set five pressure blast points at 
		\begin{equation*}
			\mfx_1 = [-0.25,0.3]^T,\;
			\mfx_2 = [-0.25,0.1]^T,\;
			\mfx_3 = [0.025,0.3]^T,\;
			\mfx_4 = [0.025,0.225]^T,\;
			\mfx_5 = [0.1,-0.1]^T,
		\end{equation*}
		with radius $r = 0.05$ and the transmissive boundary conditions is applied. The numerical results of both schemes can be found in Figure~\ref{fig:multiexplotion2D} and they perform equally well .
		\begin{figure}[htbp]
			\centering
			\begin{minipage}[b]{0.45\textwidth}
				\centering
				\includegraphics[scale=0.4]{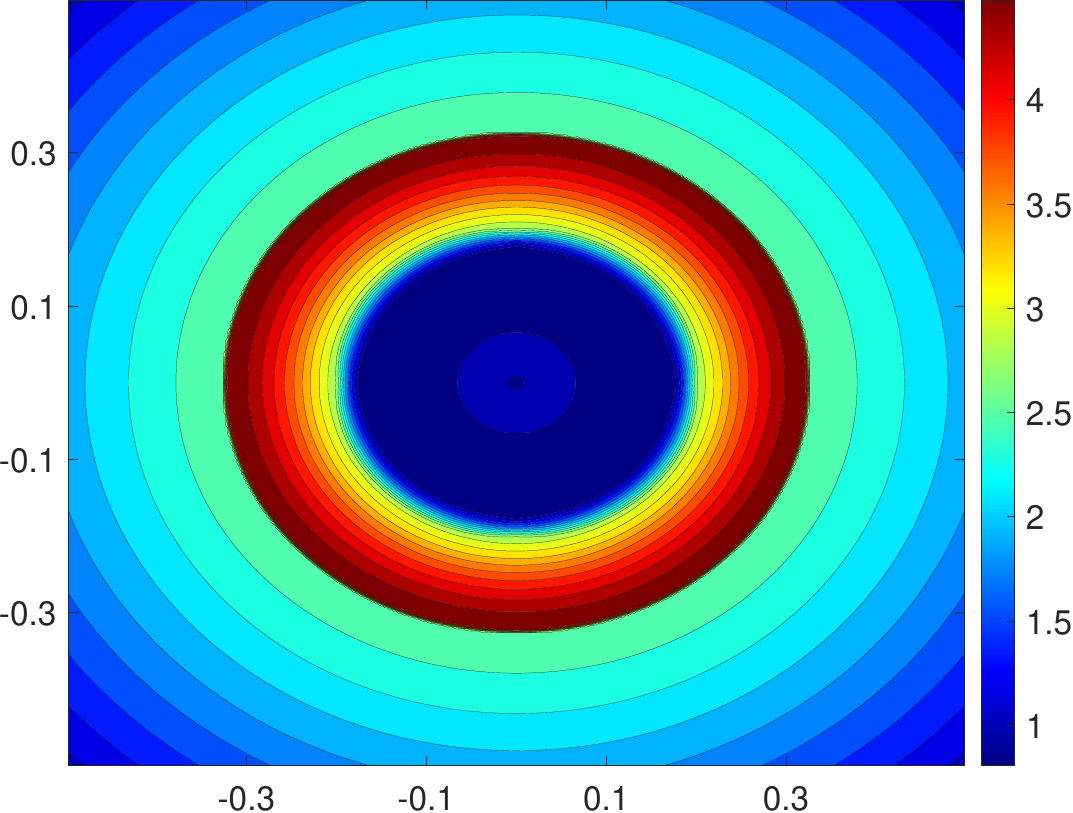}
				\caption*{(a) density $\rho$.}
			\end{minipage}
			\hspace{0.05\textwidth} 
			\begin{minipage}[b]{0.45\textwidth}
				\centering
				\includegraphics[scale=0.4]{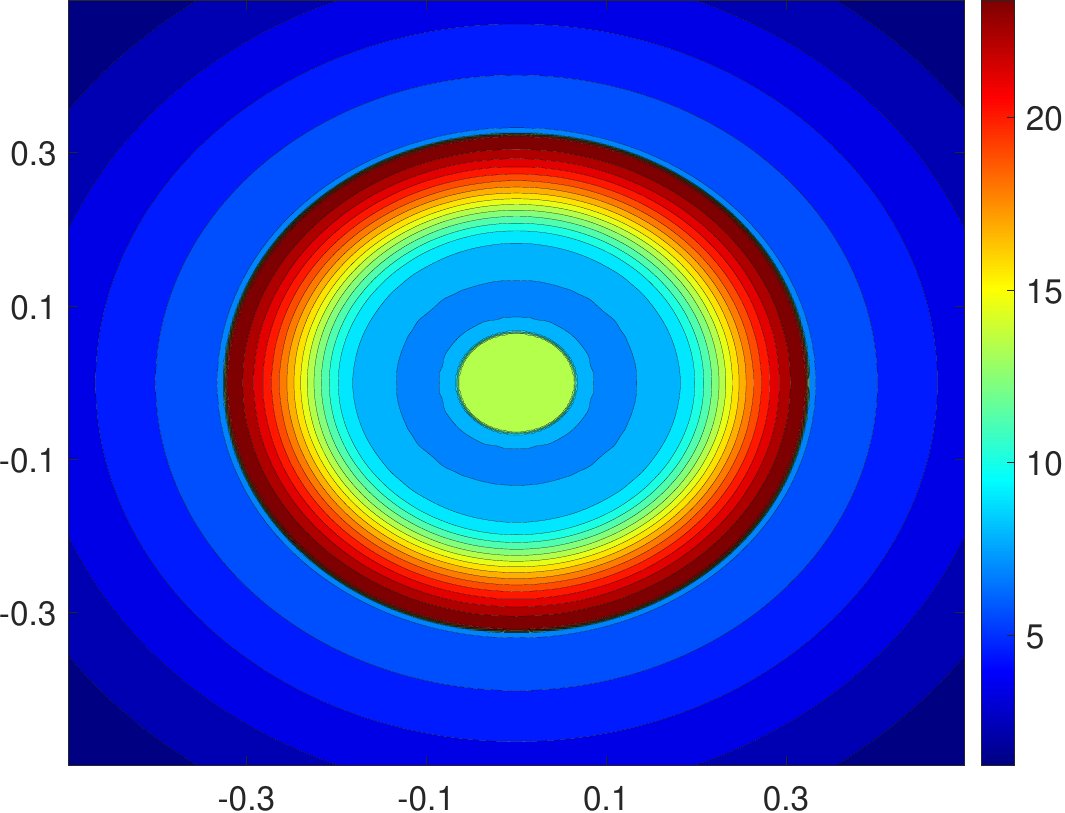}
				\caption*{(b) pressure $p$.}
			\end{minipage}
			\begin{minipage}[b]{0.45\textwidth}
				\centering
				\includegraphics[scale=0.4]{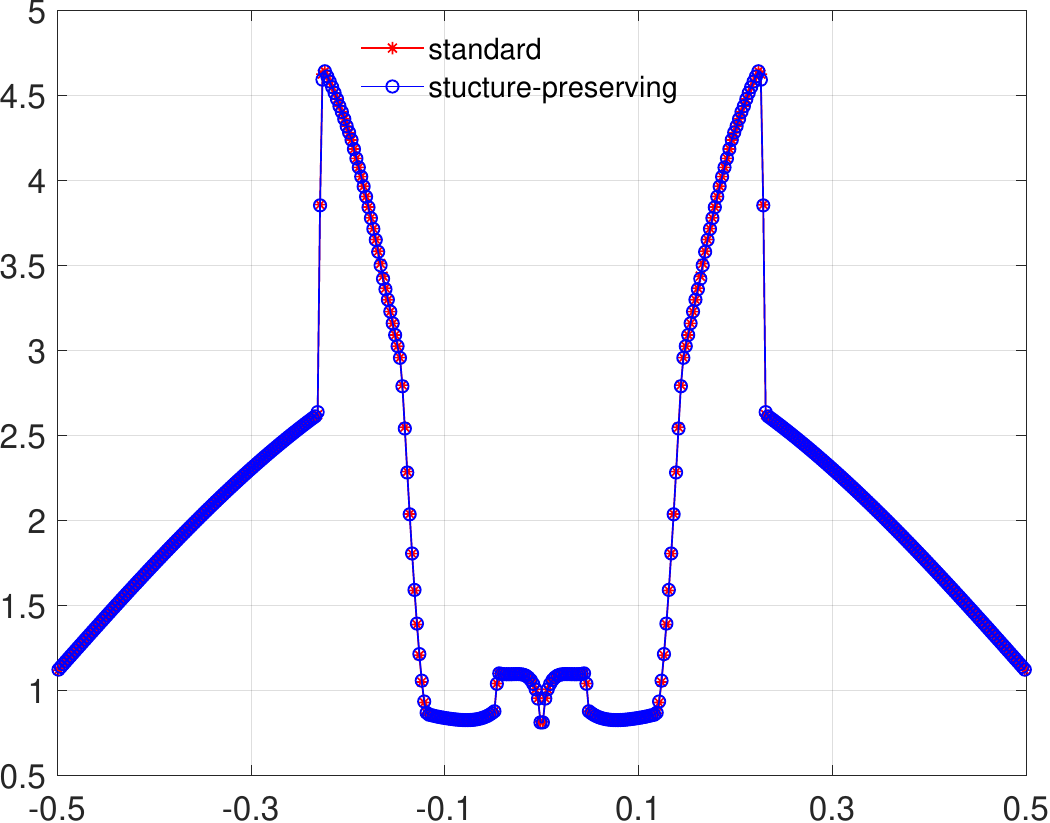}
				\caption*{(c) density $\rho$ along $x = y$.}
			\end{minipage}
			\hspace{0.05\textwidth} 
			\begin{minipage}[b]{0.45\textwidth}
				\centering
				\includegraphics[scale=0.4]{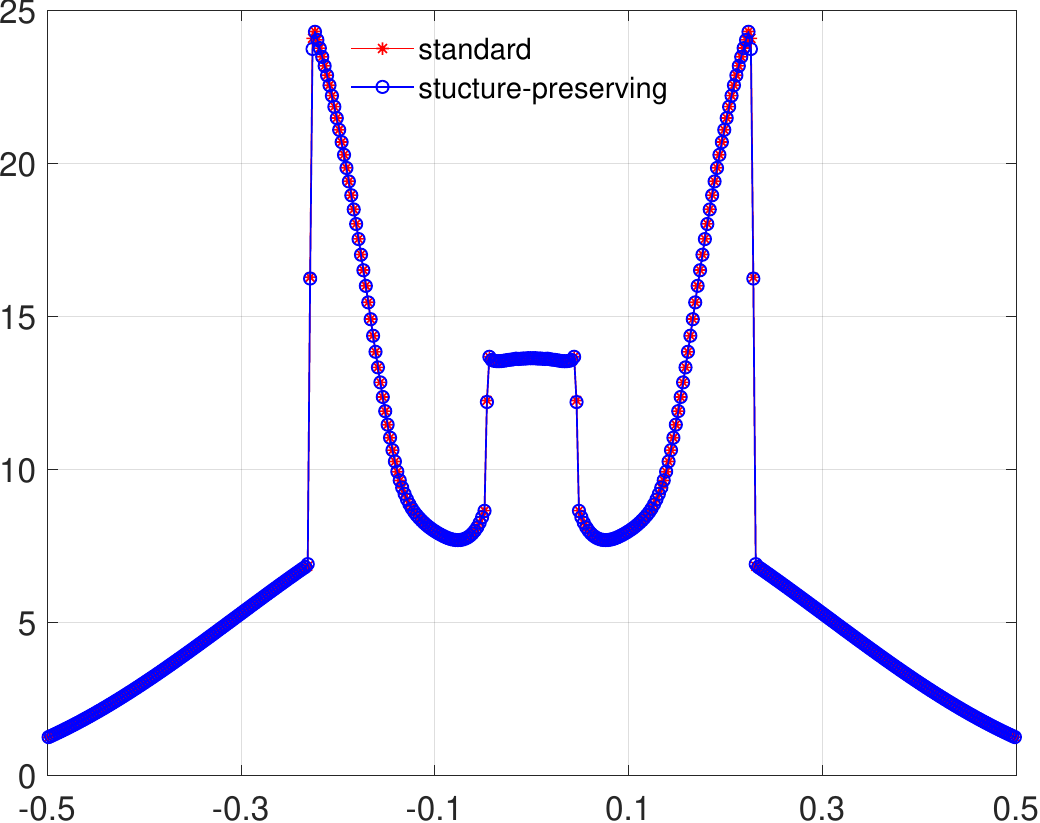}
				\caption*{(d) pressure $p$ along $x = y$.}
			\end{minipage}
			\caption{Example \ref{blast2D}: The contour plots of the density $\rho$ (a) and pressure $p$ (b) of blast wave problem at time $T = 0.05$ by the structure-preserving LDG scheme on $400\times 400$ uniform mesh. And the comparison between the structure-preserving LDG scheme and the standard LDG scheme along the line $x = y$ (c),(d).}
			\label{fig:explotion2D}
		\end{figure}
		\begin{figure}[htbp]
			\centering
			\includegraphics[scale=0.4]{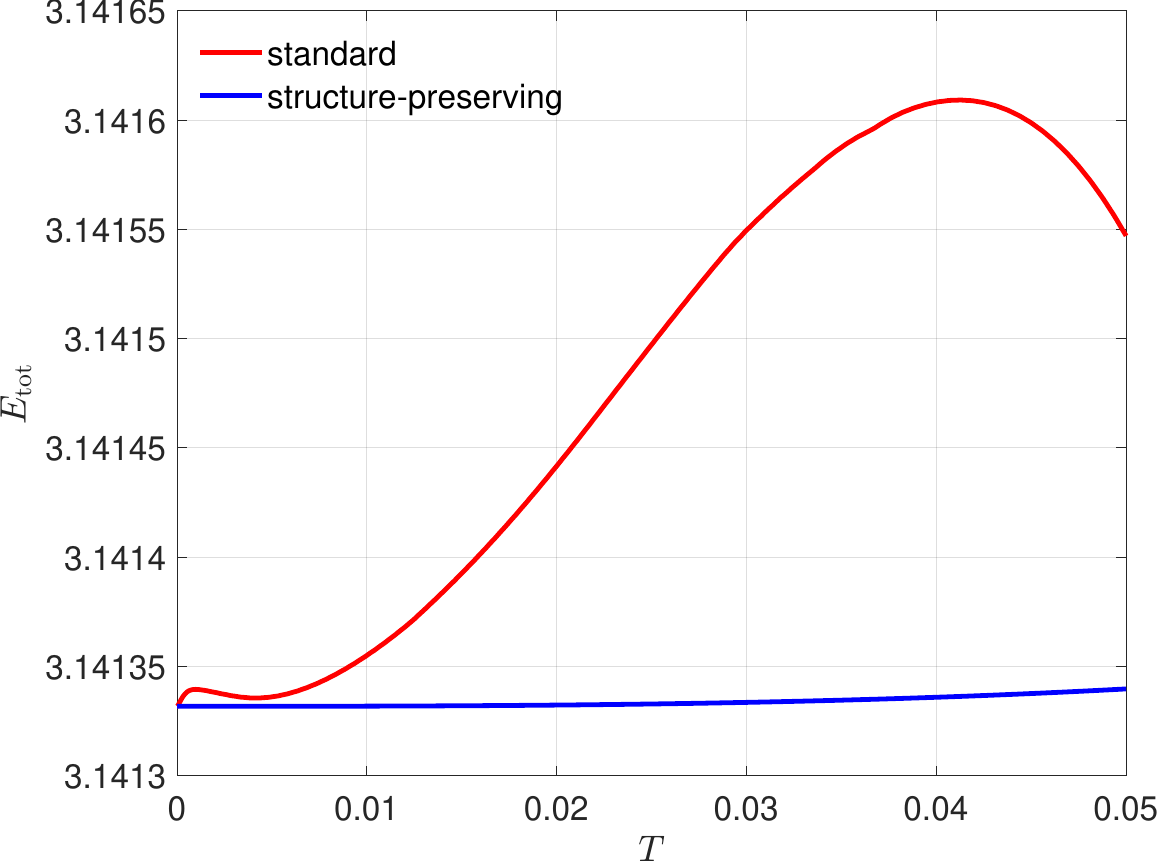}
			\caption*{(a) Total energy $E_{tot}$ with time $T$.}
			\caption{Example \ref{blast2D}: The plot of the total energy $E_{tot}$ (a) blast wave problem until $T = 0.05$ by structure-preserving LDG scheme and standard LDG scheme on a $400\times 400$ uniform mesh.}
			\label{fig:Etot}
		\end{figure}
		\begin{figure}[htbp]
			\centering
			\begin{minipage}[b]{0.45\textwidth}
				\centering
				\includegraphics[scale=0.4]{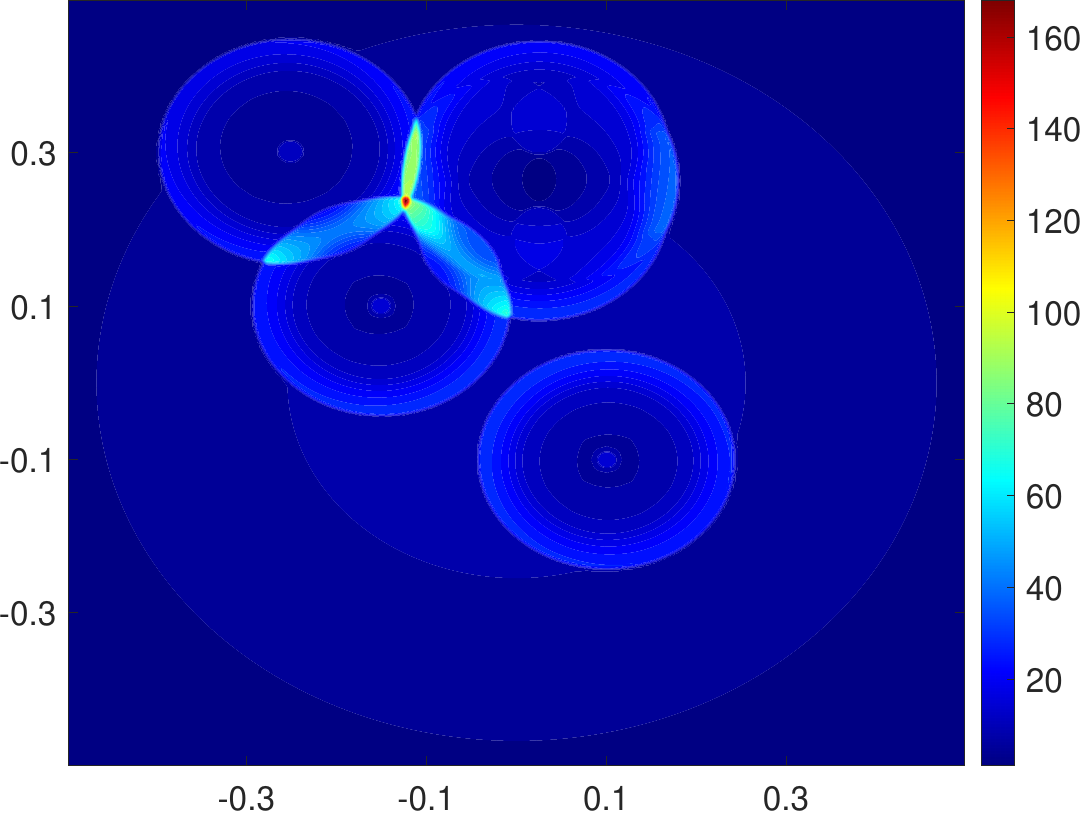}
				\caption*{(a)}
			\end{minipage}
			\hspace{0.05\textwidth} 
			\begin{minipage}[b]{0.45\textwidth}
				\centering
				\includegraphics[scale=0.4]{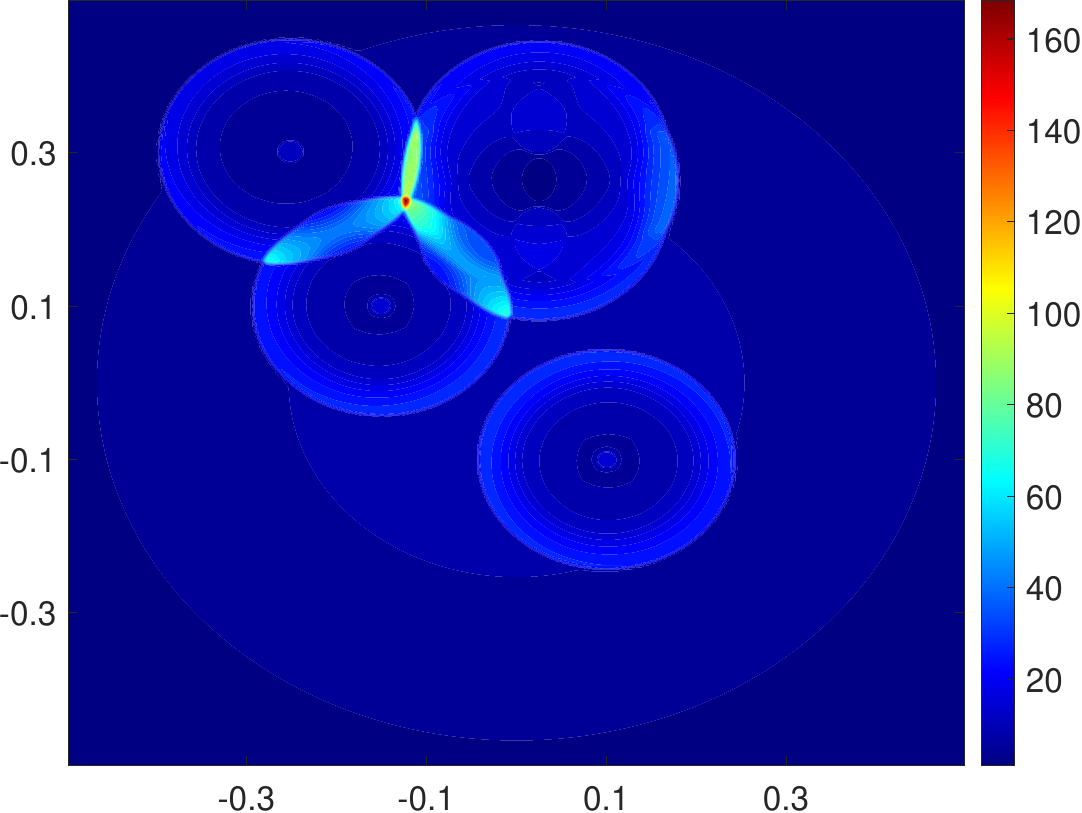}
				\caption*{(b)}
			\end{minipage}
			\caption{Example \ref{blast2D}: The contour plots of the pressure by the structure-preserving LDG scheme (a) and the standard LDG scheme (b) of another blast wave problem at time $T = 0.02$ on a $400\times 400$ uniform mesh.}
			\label{fig:multiexplotion2D}
		\end{figure}
	}
\end{exa}

\begin{exa} {\em
		\label{jeans}
		\begin{figure}[htbp]
			\centering
			\begin{minipage}[b]{0.45\textwidth}
				\centering
				\includegraphics[scale=0.4]{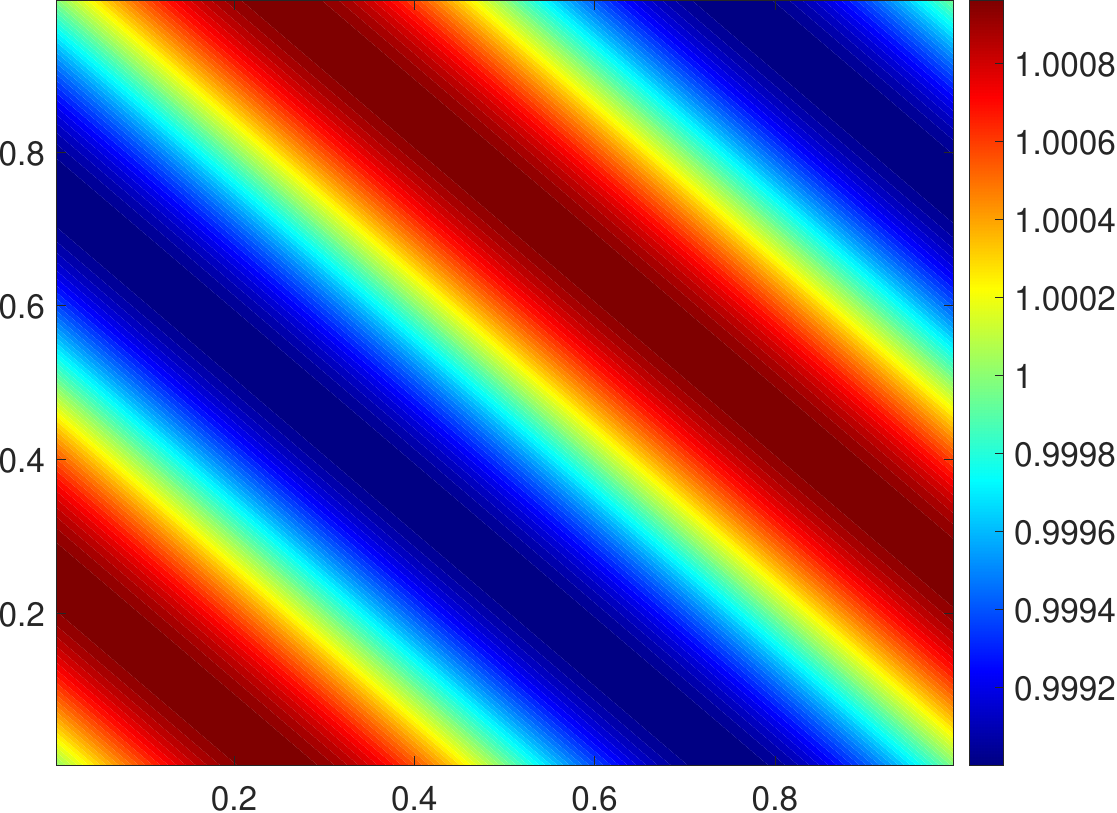}
				\caption*{(a) $T = 0.0$.}
			\end{minipage}
			\hspace{0.05\textwidth} 
			\begin{minipage}[b]{0.45\textwidth}
				\centering
				\includegraphics[scale=0.4]{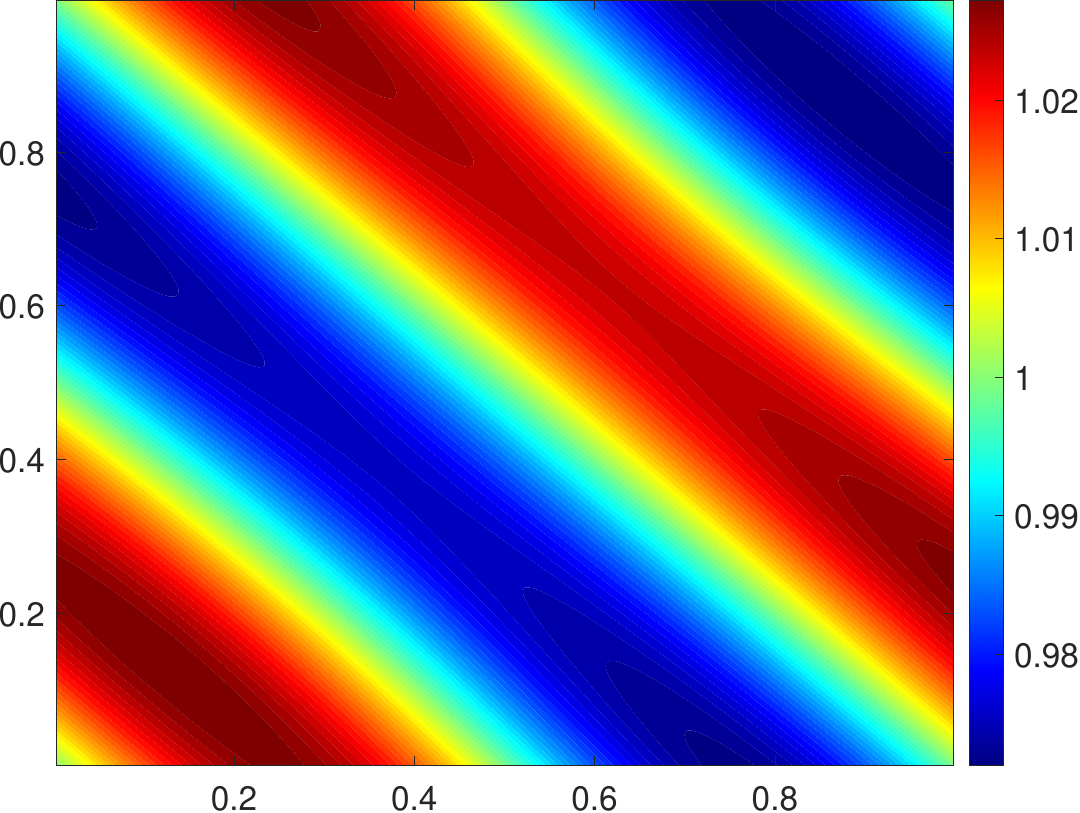}
				\caption*{(b) $T = 1.8$.}
			\end{minipage}
			\begin{minipage}[b]{0.45\textwidth}
				\centering
				\includegraphics[scale=0.4]{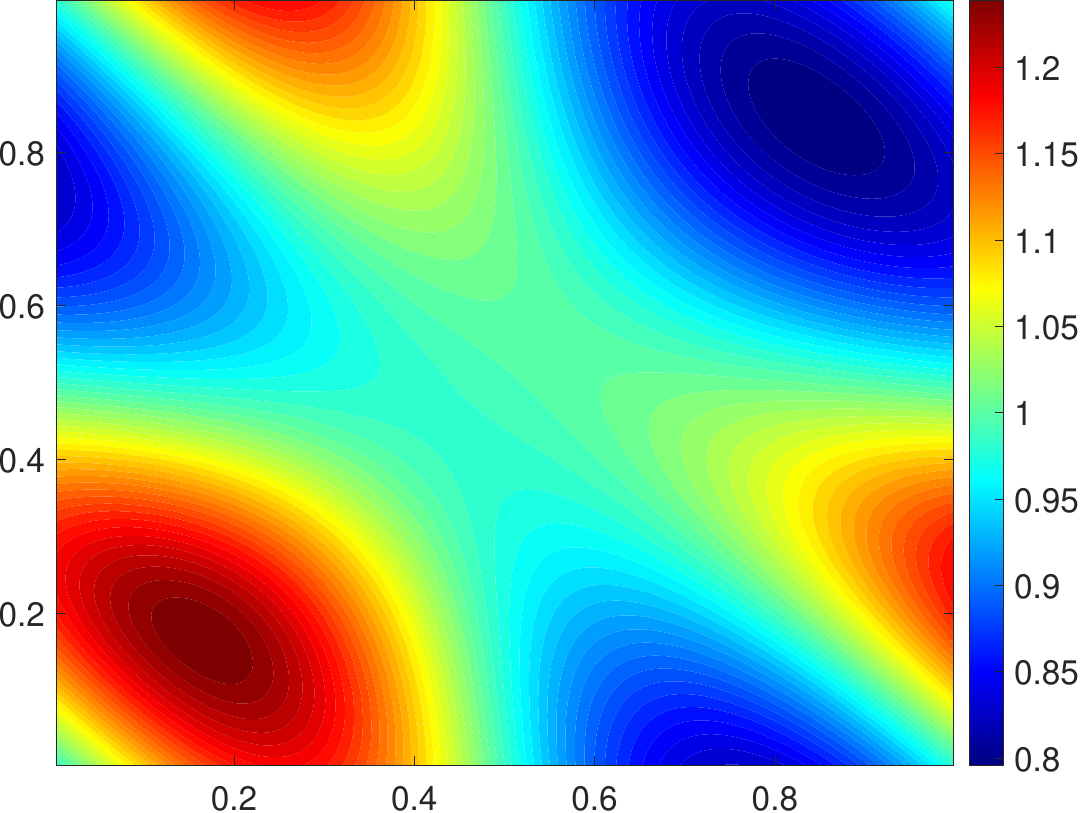}
				\caption*{(c) $T = 2.4$.}
			\end{minipage}
			\hspace{0.05\textwidth} 
			\begin{minipage}[b]{0.45\textwidth}
				\centering				\includegraphics[scale=0.4]{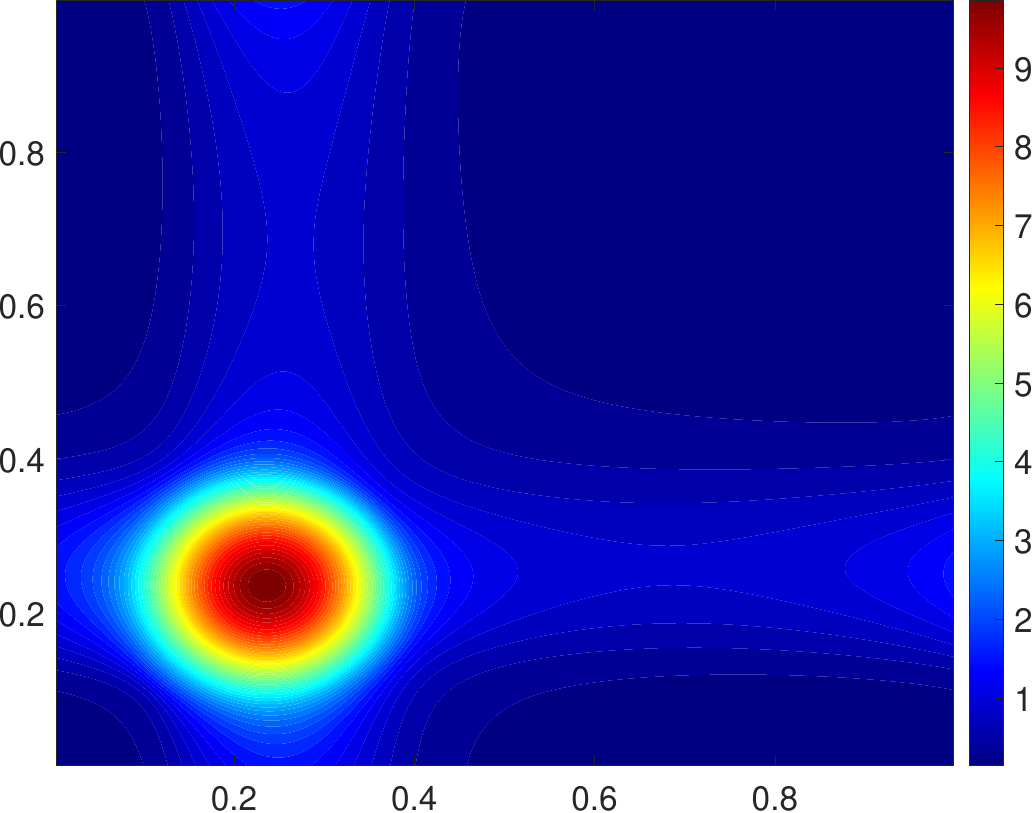}
				\caption*{(d) $T = 2.8$.}
			\end{minipage}
			\caption{Example \ref{jeans}: The contour plots of the density $\rho$ at times $T = 0.0$ (a), $T = 1.8$ (b), $T = 2.4$ (c), $T = 2.8$ (d)  by the structure-preserving LDG scheme on a $200\times 200$ uniform mesh.}
			\label{fig:unstable_jeans}
		\end{figure}
		\begin{figure}[htbp]
			\centering
			\includegraphics[scale=0.4]{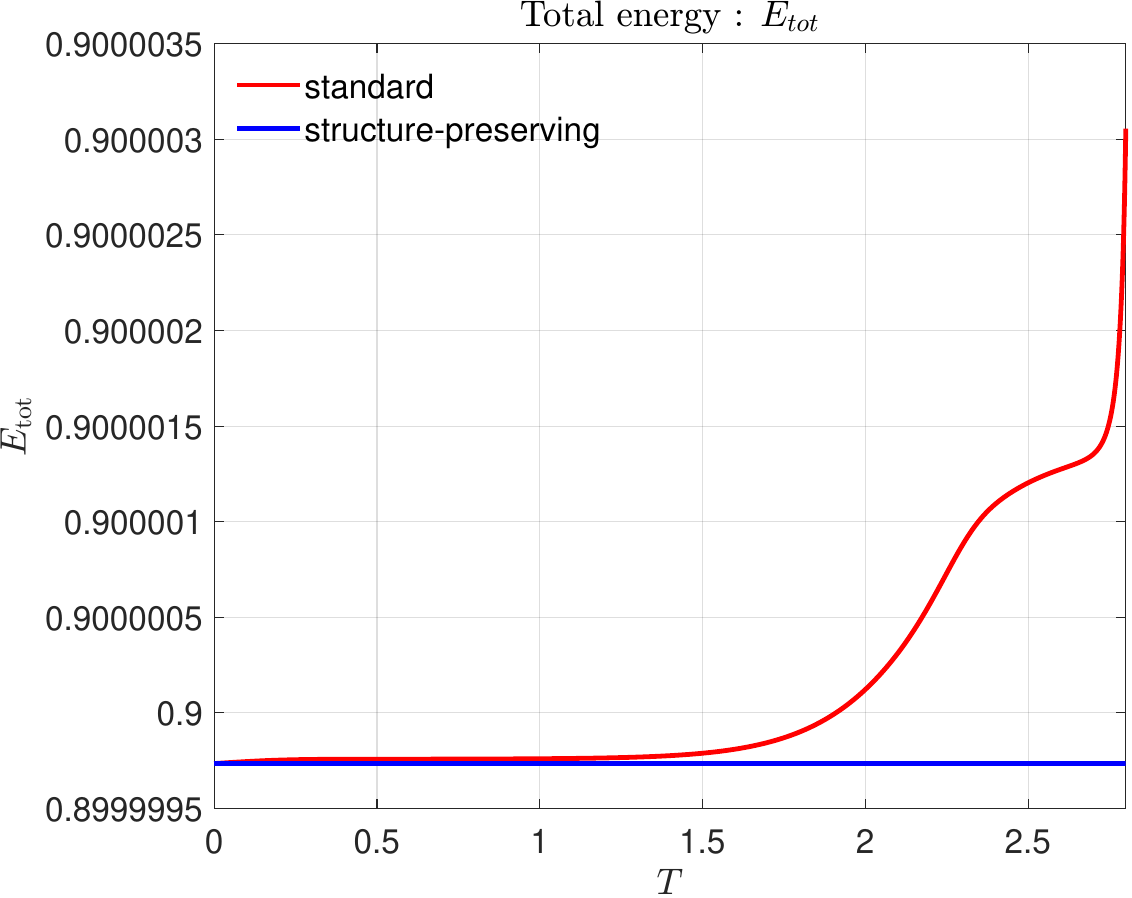}
			\caption*{(a) Total energy $E_{tot}$ with time $T$.}
			\caption{Example \ref{jeans}: The plot of the total energy $E_{tot}$ (a) in the Jeans instability problem until $T = 2.8$ by the structure-preserving LDG scheme and standard LDG scheme on a $200\times 200$ uniform mesh.}
			\label{fig:Etot_jeans_unstable}
		\end{figure}
		\begin{figure}[htbp]
			\centering
			\begin{minipage}[b]{0.45\textwidth}
				\centering
				\includegraphics[scale=0.4]{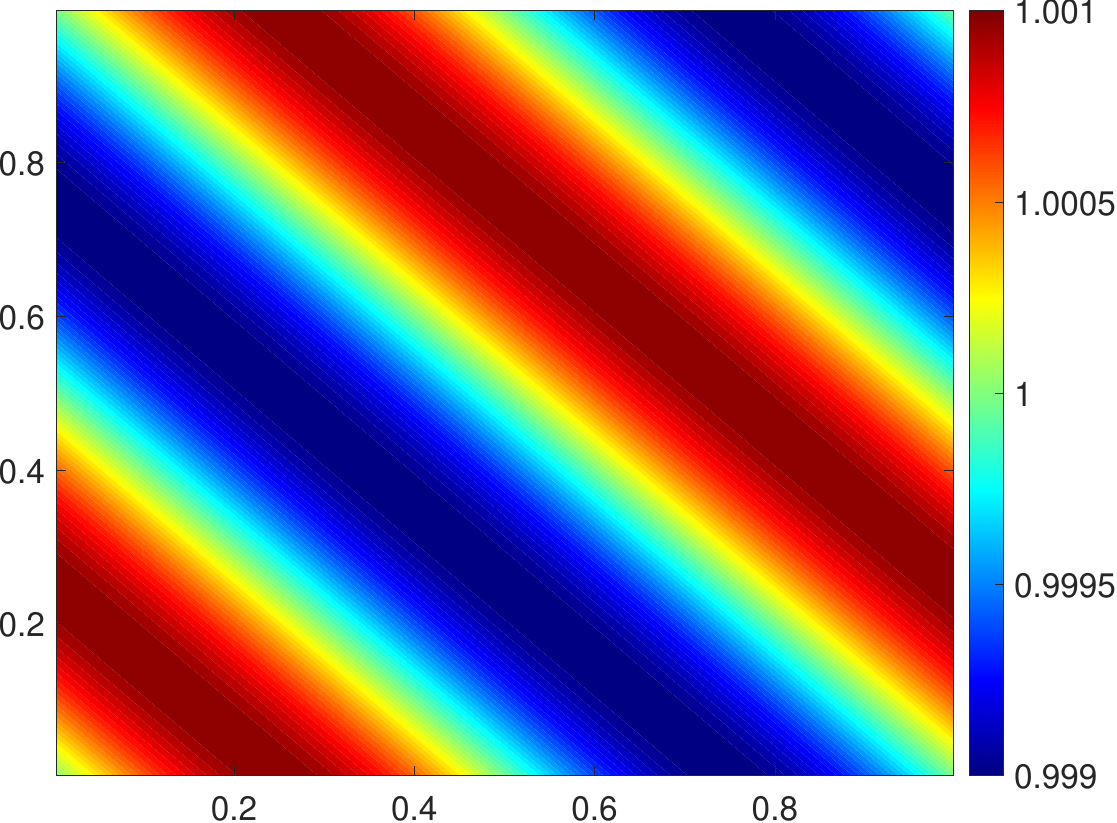}
				\caption*{(a) $T = 0.0$.}
			\end{minipage}
			\hspace{0.05\textwidth} 
			\begin{minipage}[b]{0.45\textwidth}
				\centering
				\includegraphics[scale=0.4]{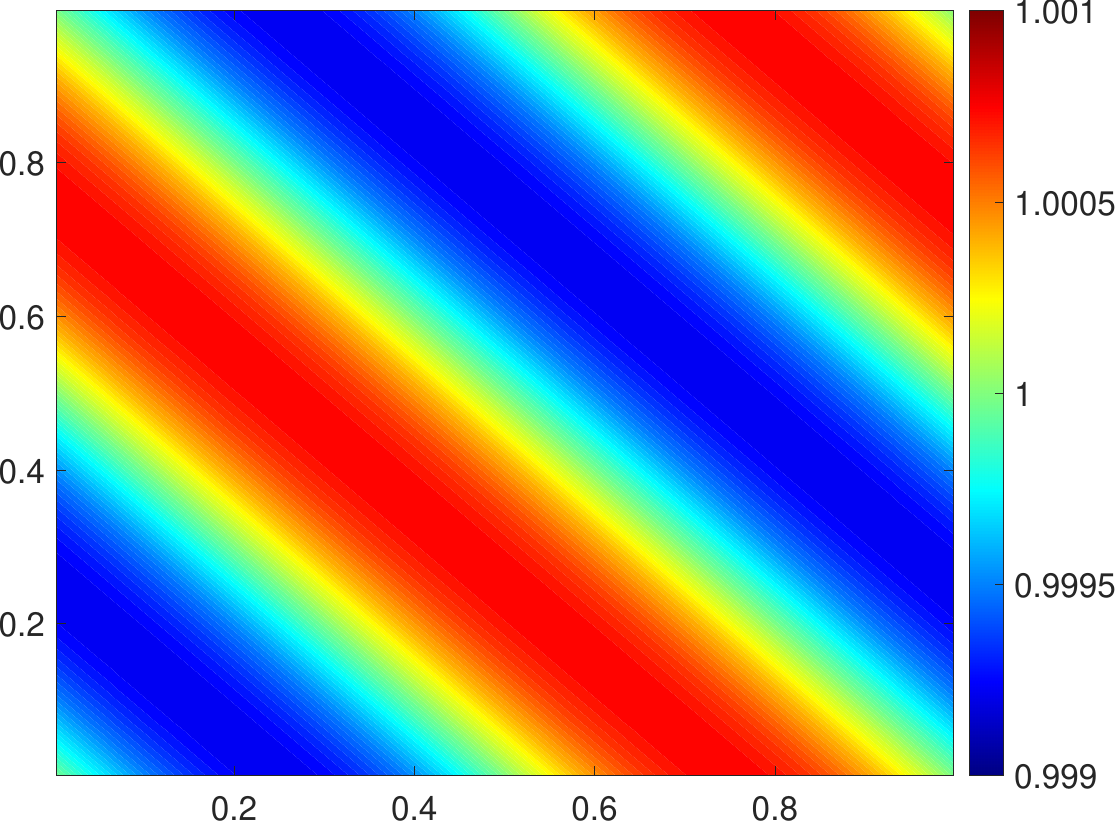}
				\caption*{(b) $T = 1.2$.}
			\end{minipage}
			\begin{minipage}[b]{0.45\textwidth}
				\centering
				\includegraphics[scale=0.4]{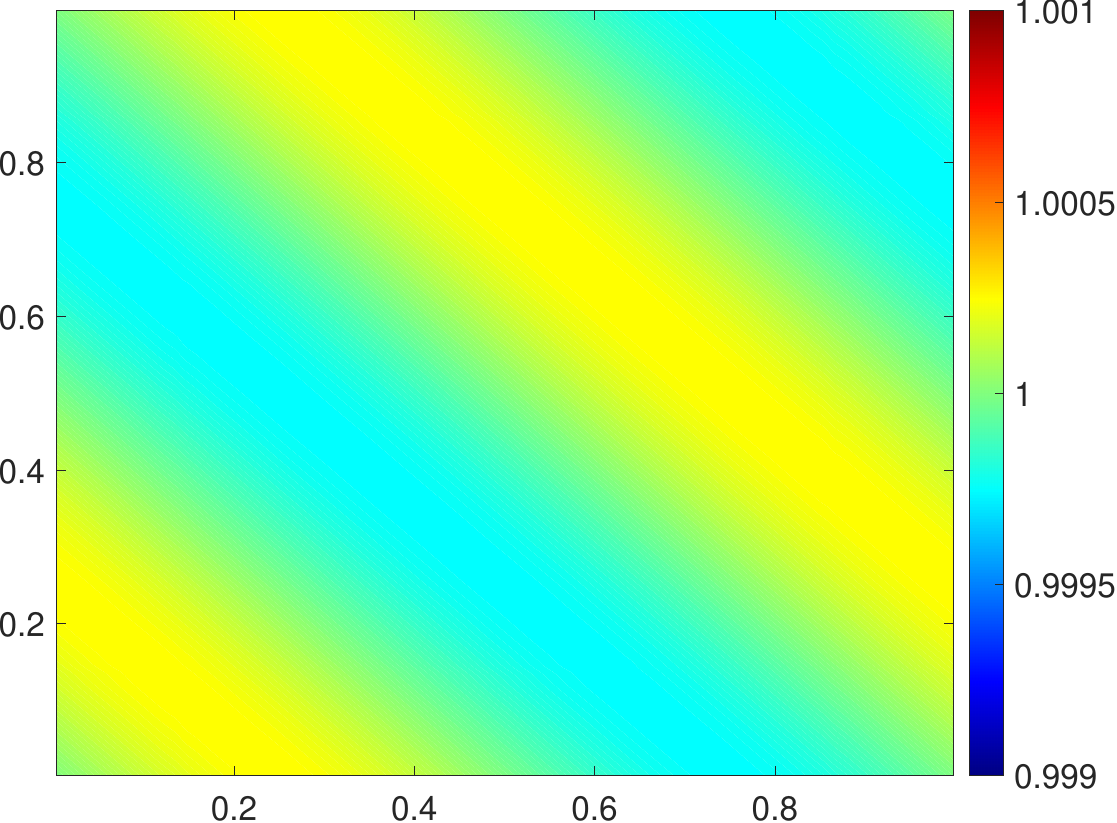}
				\caption*{(c) $T = 2.4$.}
			\end{minipage}
			\hspace{0.05\textwidth} 
			\begin{minipage}[b]{0.45\textwidth}
				\centering				\includegraphics[scale=0.4]{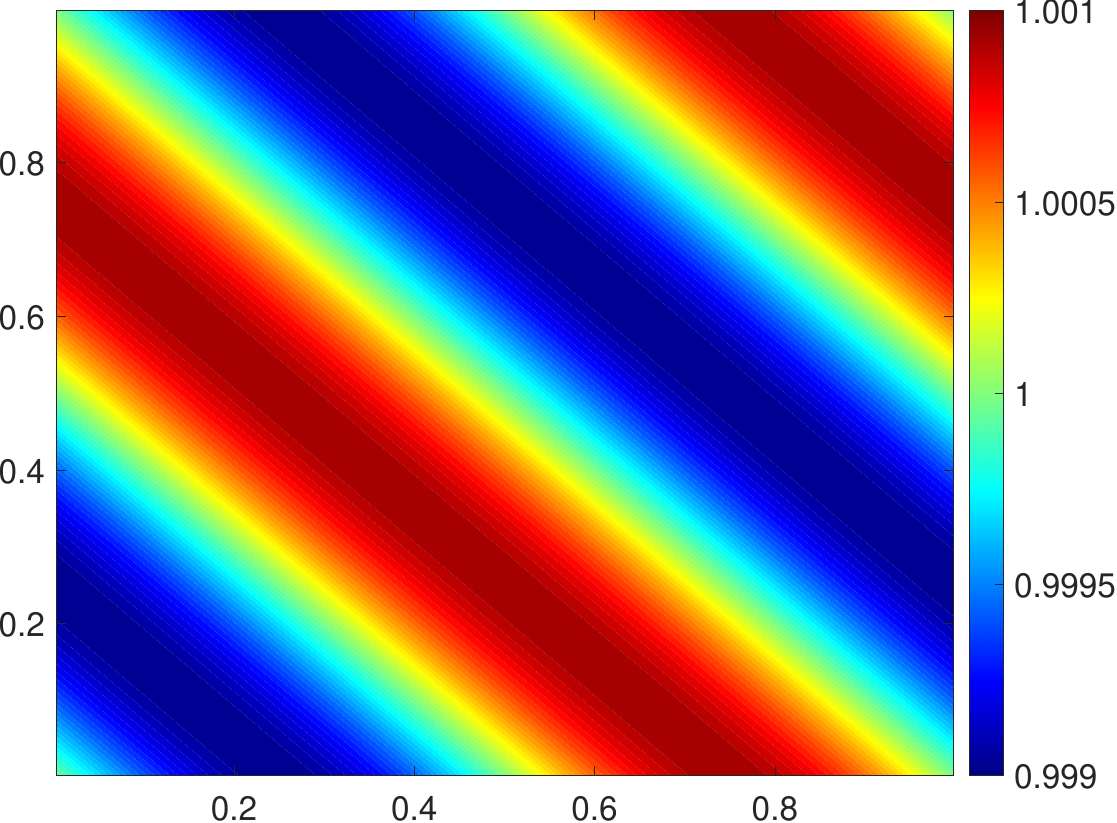}
				\caption*{(d) $T = 2.6$.}
			\end{minipage}
			\caption{Example \ref{jeans}: The contour plots of the density $\rho$ at times $T = 0.0$ (a), $T = 1.2$ (b), $T = 2.4$ (c), $T = 2.6$ (d)  by the structure-preserving LDG scheme on a $200\times 200$ uniform mesh.}
			\label{fig:stable_jeans}
		\end{figure}
		({\bf{Jeans instability}}) A classical example for an instability for compressible SG Euler equations is the so-called jeans instability problem, which was first described by Jeans \cite{1902JeansInstability}. As a fundamental problem in astrophysics, the Jeans instability problem serves as an excellent test case for verifying the TEC property of numerical schemes.
		The domain is $\Omega = [0,1]^2$ with periodic boundary conditions as well as the gravitational potential and $\gamma = 5/3$.
		
		The initial condition is similar in \cite{PurelyDG_SG2021,JIANG201348,mullen2021extension}. With the background constant density $\rho_0 = 1$ and constant pressure $p_0 = \rho_0/\gamma$, we apply a small perturbation on density and pressure.
		\begin{subequations}
			\begin{equation*}
				\rho(\mfx,0) = \rho_0\bdp{1+\delta_0\sin(\mfk\cdot\mfx)},
			\end{equation*}
			\begin{equation*}
				p(\mfx,0) = p_0\bdp{1+\delta_0\sin(\mfk\cdot\mfx)},
			\end{equation*}
			\begin{equation*}
				\mfu(\mfx,0) = \mathbf{0},
			\end{equation*}
			and the gravitational potential $\phi$ is given by:
			\begin{equation*}
				\Delta\phi = 4\pi G (\rho - \rho_0).
			\end{equation*}
		\end{subequations}
		Here, $\delta_0 = 10^{-3}$ denotes the wave amplitude, and $\mfk$ is the wave vector. The total energy is defined as
		\begin{equation*}
			E_{tot} = E + \dfrac{1}{2}(\rho - \rho_0)\phi.
		\end{equation*} 
		The amplitude of the perturbation changes with time as $e^{i\omega t}$, where $\omega$ is determined by the Jeans dispersion relation in Fourier space \cite{2006HubberSGSPH}:
		\begin{equation}
			\omega^2 = c_0^2|\mfk|^2 - 4\pi G \rho_0, \label{Jeansdispersion}
		\end{equation}
		with $c_0 = \sqrt{\gamma p_0/\rho_0}$ the sound speed. Hence we can identify the Jeans wave number $k_J$ from \eqref{Jeansdispersion}  
		\begin{equation*}
			k_J = \dfrac{\sqrt{4\pi G \rho_0}}{c_0}.
		\end{equation*}
		When $\omega^2 < 0,|\mfk|/k_J<1$, the wave perturbation becomes unstable and leads to gravitational collapse, whereas when $\omega^2 > 0,|\mfk|/k_J>1$, it undergoes stable periodic oscillations. For simplicity, we vary the gravitational constant $G$ to  investigate both unstable and stable cases.

		This test adopts the OE technique and the PP limiter, following the approach used in Example~\ref{blast2D}. We set $k_1 = k_2 = 2\pi$. The numerical results of density on a $200^2$ uniform mesh are shown in Figure~\ref{fig:unstable_jeans} and Figure~\ref{fig:stable_jeans}. Due to the periodic boundary conditions, we implement with the PardisoLU solver from Intel MKL. In Figure~\ref{fig:unstable_jeans}, we set $G = 6.674$ and observe the expected gravitational collapse near the point $(0.2375,0.2375)$. We illustrate the time history of the changes in total energy in Figure~\ref{fig:Etot_jeans_unstable}. The maximum absolute value of the changes in total energy is $2.53137\times 10^{-13}$ for the structure-preserving scheme, which is at the level of machine error, and is significantly lower than the $3.2845\times 10^{-6}$ observed in the standard scheme. A sharp increase in total energy is observed during the gravitational collapse, and the instability renders the results of the standard LDG scheme unreliable.  Additionally, Figure~\ref{fig:stable_jeans} shows the stable case with $G = 0.6674$, where the density exhibits periodic variations.
		
	}
\end{exa}

\subsection{Three dimensional case}
For the three-dimensional test, we show the performance of our structure-preserving LDG scheme and the standard LDG scheme on certain astrophysical problems. 

\begin{exa} {\em
		\label{acctest3D}
		({\bf{Accuracy test}}) 
		We consider the following smooth problem. The steady state is given by
		\begin{subequations}
			\begin{equation*}
				\rho(x,y,z,t) = \sin\bdp{\dfrac{\sqrt{3}}{3a}\bdp{x+y+z-(u_0 + v_0 + w_0)t}},
			\end{equation*}
			\begin{equation*}
				u(x,y,z,t) = u_0,\, v(x,y,z,t) = v_0,\,w(x,y,z,t) = w_0,
			\end{equation*}
			\begin{equation*}
				p(x,y,t) = \kappa\rho^2,\; \phi(x,y,t) = - 4\pi G a^2\rho,
			\end{equation*}
		\end{subequations}
		where
		\begin{equation*}
			a = \sqrt{\dfrac{\kappa(n+1)\lambda^{\frac{1-n}{n}}}{4\pi G}}.
		\end{equation*}
		Let $\kappa = 2\pi$, $G = 1/\pi$, $\lambda = 1$, $n = 1$, $\gamma = 2$, $u_0 = 0.2$, $v_0 = 0.3$, and $w_0 = 0.5$. The computational domain $\Omega$ is set as 
		\begin{equation*}
			\Omega = \left[\dfrac{1}{18}\sqrt{3}\pi a,\dfrac{5}{18}\sqrt{3}\pi a\right]^3.
		\end{equation*}
		We take the uniform mesh with $N^3$. As similar in Example~\ref{exam1}, we present the numerical errors and convergence orders of various variables at $T = 0.3$
		by the structure-preserving LDG scheme, both without the OE technique in Table~\ref{SPLDG:acctest:3D} and with the OE technique in Table~\ref{SPLDG:OEacctest:3D}. The OE technique will slightly affect the convergence orders on a coarse mesh.
	\begin{table}[htbp]
		\centering
		\caption{ Example \ref{acctest3D}. Numerical errors and convergence orders of different variables in the structure-preserving LDG scheme without OE technique. T = 0.3. }
		\label{SPLDG:acctest:3D}
		\begin{tabular}{c|c|c c|c c|c c|c c}
			\hline
			\multicolumn{2}{c|}{} & \multicolumn{4}{c|}{$\rho_h$} & \multicolumn{4}{c}{$(\rho u)_h$} \\ 
			\hline
			Element & Mesh & $L^1$ error & order & $L^\infty$ error & order & $L^1$ error & order & $L^\infty$ error & order\\
			\hline
			\multirow{4}{*}{$P^2$} 
			& $4\times4\times4$   & 1.32e-04 & --   & 5.78e-03 & --   & 5.60e-05 & --   & 1.90e-03 & --   \\  
			& $8\times8\times8$   & 1.38e-05 & 3.26 & 6.84e-04 & 3.08 & 5.98e-06 & 3.23 & 2.45e-04 & 2.96 \\  
			& $16\times16\times16$ & 1.57e-06 & 3.14 & 8.08e-05 & 3.08 & 7.05e-07 & 3.08 & 3.04e-05 & 3.01 \\  
			& $32\times32\times32$ & 1.88e-07 & 3.06 & 9.65e-06 & 3.06 & 8.53e-08 & 3.05 & 3.77e-06 & 3.01 \\  
			\hline
			\multicolumn{2}{c|}{} & \multicolumn{4}{c|}{$(\rho v)_h$} & \multicolumn{4}{c}{$(\rho w)_h$} \\  
			\hline
			Element & Mesh & $L^1$ error & order & $L^\infty$ error & order & $L^1$ error & order & $L^\infty$ error & order\\  
			\hline
			\multirow{4}{*}{$P^2$}   
			& $4\times4\times4$   & 6.85e-05 & --   & 2.60e-03 & --   & 9.58e-05 & --   & 4.05e-03 & --   \\  
			& $8\times8\times8$   & 7.26e-06 & 3.24 & 3.32e-04 & 2.97 & 9.81e-06 & 3.29 & 5.15e-04 & 2.98 \\  
			& $16\times16\times16$ & 8.23e-07 & 3.14 & 4.12e-05 & 3.01 & 1.11e-06 & 3.15 & 6.35e-05 & 3.02 \\  
			& $32\times32\times32$ & 9.83e-08 & 3.07 & 5.08e-06 & 3.02 & 1.32e-07 & 3.07 & 7.81e-06 & 3.02 \\  
			\hline
			\multicolumn{2}{c|}{} & \multicolumn{4}{c|}{$\bdp{E_{tot}}_h$} & \multicolumn{4}{c}{$\phi_h$} \\  
			\hline
			Element & Mesh & $L^1$ error & order & $L^\infty$ error & order & $L^1$ error & order & $L^\infty$ error & order\\  
			\hline
			\multirow{4}{*}{$P^2$}   
			& $4\times4\times4$   & 9.89e-05 & --   & 3.77e-03 & --   & 7.50e-05 & --   & 5.08e-03 & --   \\  
			& $8\times8\times8$   & 1.08e-05 & 3.19 & 4.72e-04 & 3.00 & 9.12e-06 & 3.04 & 6.80e-04 & 2.90 \\  
			& $16\times16\times16$ & 1.30e-06 & 3.06 & 5.78e-05 & 3.03 & 1.13e-06 & 3.01 & 8.74e-05 & 2.96 \\  
			& $32\times32\times32$ & 1.59e-07 & 3.03 & 7.19e-06 & 3.01 & 1.40e-07 & 3.01 & 1.11e-05 & 2.98 \\  
			\hline
		\end{tabular}
	\end{table}
		
	\begin{table}[htbp]
		\centering
		\caption{ Example \ref{acctest3D}. Numerical errors and convergence orders of different variables by the structure-preserving LDG scheme with OE technique. T = 0.3. }
		\label{SPLDG:OEacctest:3D}
		\begin{tabular}{c|c|c c|c c|c c|c c}
			\hline
			\multicolumn{2}{c|}{} & \multicolumn{4}{c|}{$\rho$} & \multicolumn{4}{c}{$(\rho u)_h$} \\ 
			\hline
			Element & Mesh & $L^1$ error & order & $L^\infty$ error & order & $L^1$ error & order & $L^\infty$ error & order\\
			\hline
			\multirow{4}{*}{$P^2$} 
			& $4\times4\times4$   & 1.36e-03 & --   & 1.40e-02 & --   & 8.35e-04 & --   & 9.56e-03 & --   \\  
			& $8\times8\times8$   & 3.11e-04 & 2.13 & 6.89e-03 & 1.02 & 2.19e-04 & 1.93 & 6.47e-03 & 0.56 \\  
			& $16\times16\times16$ & 1.81e-05 & 4.10 & 9.74e-04 & 2.82 & 1.62e-05 & 3.76 & 1.52e-03 & 2.09 \\  
			& $32\times32\times32$ & 3.78e-07 & 5.58 & 8.04e-06 & 6.92 & 2.77e-07 & 5.87 & 7.42e-06 & 7.68 \\  
			& $64\times64\times64$ & 2.48e-08 & 3.93 & 1.08e-06 & 2.90 & 1.60e-08 & 4.12 & 4.05e-07 & 4.20 \\ 
			\hline
			\multicolumn{2}{c|}{} & \multicolumn{4}{c|}{$(\rho v)_h$} & \multicolumn{4}{c}{$(\rho w)_h$} \\  
			\hline
			Element & Mesh & $L^1$ error & order & $L^\infty$ error & order & $L^1$ error & order & $L^\infty$ error & order\\  
			\hline
			\multirow{4}{*}{$P^2$}   
			& $4\times4\times4$   & 1.12e-03 & --   & 1.56e-02 & --   & 1.82e-03 & --   & 2.98e-02 & --   \\  
			& $8\times8\times8$   & 3.13e-04 & 1.84 & 9.37e-03 & 0.74 & 5.30e-04 & 1.78 & 1.66e-02 & 0.84 \\  
			& $16\times16\times16$ & 2.45e-05 & 3.67 & 2.52e-03 & 1.90 & 4.24e-05 & 3.64 & 4.67e-03 & 1.83 \\  
			& $32\times32\times32$ & 4.23e-07 & 5.86 & 1.37e-05 & 7.53 & 7.04e-07 & 5.91 & 2.51e-05 & 7.54 \\ 
			& $64\times64\times64$ & 2.13e-08 & 4.31 & 4.86e-07 & 4.81 & 3.32e-08 & 4.41 & 6.69e-07 & 5.23 \\  
			\hline
			\multicolumn{2}{c|}{} & \multicolumn{4}{c|}{$\bdp{E_{tot}}_h$} & \multicolumn{4}{c}{$\phi_h$} \\  
			\hline
			Element & Mesh & $L^1$ error & order & $L^\infty$ error & order & $L^1$ error & order & $L^\infty$ error & order\\  
			\hline
			\multirow{4}{*}{$P^2$}   
			& $4\times4\times4$ & 1.08e-03 & --   & 2.22e-02 & --   & 8.55e-05 & --   & 5.15e-03 & --   \\  
			& $8\times8\times8$   & 3.14e-04 & 1.78 & 1.34e-02 & 0.72 & 1.14e-05 & 2.91 & 6.79e-04 & 2.92 \\  
			& $16\times16\times16$ & 3.03e-05 & 3.37 & 3.38e-03& 1.99 & 1.32e-06 & 3.11 & 8.73e-05 & 2.96 \\  
			& $32\times32\times32$ & 5.28e-07 & 5.84 & 2.26e-05 & 3.91 & 1.42e-07 & 3.22 & 1.11e-05 & 2.98 \\  
			& $64\times64\times64$ & 3.09e-08 & 4.09 & 1.11e-06 & 4.34 & 1.76e-08 & 3.01 & 1.39e-06 & 2.99 \\ 
			\hline
		\end{tabular}
	\end{table}
	}
\end{exa}

\begin{exa} {\em
		\label{WB3D}
		({\bf{Well-balanced property}}) In this example, we consider the three-dimensional polytropic problem, which is widely used in astrophysical simulations and has been studied in \cite{maciel2015introduction,Zhang_2022,KAPPELI2014WB}. This problem originates from the Lane–Emden equation, as shown in \eqref{S2:eq:13} and \eqref{S2:eq:14}. For the case $\gamma = 2$, the equilibrium state is given by:
		\begin{equation}
			\rho^e(\mfx) = \rho_0\dfrac{\sin(\alpha r)}{\alpha r}, \quad p^e(\mfx) = K\bdp{\rho_0\dfrac{\sin(\alpha r)}{\alpha r}}^2, \quad \phi^e(\mfx) = -2K\rho_0\dfrac{\sin(\alpha r)}{\alpha r},\label{eqstate3D}
		\end{equation}
		where $r = \sqrt{x^2 + y^2 + z^2}$ is the radial distance to the origin, and
		\begin{equation*}
			\alpha = \sqrt{\dfrac{4\pi G}{2K}}.
		\end{equation*}
		Here, $\rho_0$ is a positive constant. In the following, we set $K = \rho_0 = 1$ and $G = 1/\pi$. It is worth noting that the equilibrium state given in \eqref{eqstate3D} does not strictly satisfy the Poisson equation in two dimensions. In addition, some existing studies \cite{wu2021uniformly,LI2018WB,KAPPELI2014WB,li2018FD_WB} consider a static gravitational field instead of solving the full Poisson equation.
		
		The computation domain is set as $[-0.5,0.5]^3$ with $N^3$ uniform mesh. The initial condition is set as the equilibrium state to test the WB property. We apply the exact boundary condition and the steady state is preserved up to the machine error in Table~\ref{SPLDG:WBtest3D}, which confirms the WB property of our structure-preserving LDG scheme.
		
		\begin{table}[h!]
			\centering
			\caption{Example \ref{WB3D}. Numerical errors and convergence orders of different variables by the structure-preserving LDG scheme. T = 1.0. $P^2$ element.}
			\label{SPLDG:WBtest3D}
			\begin{tabular}{c|c|c c|c|c|c c}
				\hline
				Variables & Mesh & $L^1$ error & $L^\infty$ error & Variables & Mesh & $L^1$ error & $L^\infty$ error \\ 
				\hline 
				\multirow{4}{*}{$\rho_h$} & $4\times4\times4$ & 2.33e-16 & 1.55e-15 & \multirow{4}{*}{$(\rho u)_h$} & $4\times4\times4$  & 2.80e-16 & 3.32e-15 \\
				& $8\times8\times8$  & 1.46e-16 & 1.22e-15 &  & $8\times8\times8$  & 6.67e-16 & 8.04e-15 \\
				& $16\times16\times16$  & 7.35e-16 & 5.11e-15 &  & $16\times16\times16$  &  1.55e-15 & 2.28e-14 \\
				& $32\times32\times32$  & 1.70e-15 & 1.21e-14 &  & $32\times32\times32$  & 3.54e-15 & 6.89e-14 \\
				& $64\times64\times64$  & 3.19e-15 & 1.61e-14 &  & $64\times64\times64$  & 7.70e-15 & 1.47e-13 \\
				\hline
				\multirow{4}{*}{$(\rho v)_h$} & $4\times4\times4$  & 2.77e-16 & 2.22e-15 & \multirow{4}{*}{$(\rho w)_h$} &$4\times4\times4$  & 2.71e-16 & 2.18e-15 \\
				& $8\times8\times8$  &  6.89e-16 & 7.85e-15 &  & $8\times8\times8$  & 7.04e-16 & 6.24e-15 \\
				& $16\times16\times16$  & 1.59e-15 & 2.23e-14 &  & $16\times16\times16$  & 1.63e-15 & 1.59e-14 \\
				& $32\times32\times32$ & 3.63e-15 & 6.21e-14 &  & $32\times32\times32$  & 3.70e-15 & 3.49e-14 \\
				& $64\times64\times64$  & 7.88e-15 & 1.52e-13 &  & $64\times64\times64$  & 7.96e-15 & 9.24e-14 \\
				\hline
				\multirow{4}{*}{$E_h$} & $4\times4\times4$  & 1.79e-16 & 9.99e-16 & \multirow{4}{*}{$\phi_h$} &$4\times4\times4$  & 0.00e+00 & 0.00e+00 \\
				& $8\times8\times8$  & 1.64e-16 & 1.11e-15 &  & $8\times8\times8$  & 0.00e+00 & 0.00e+00 \\
				& $16\times16\times16$  & 2.25e-16 & 2.44e-15 &  & $16\times16\times16$  & 6.41e-17 & 2.22e-16 \\
				& $32\times32\times32$ & 3.57e-16 & 2.89e-15 &  & $32\times32\times32$  & 1.71e-16 & 6.66e-16 \\
				& $64\times64\times64$  & 5.12e-16 & 3.89e-15 &  & $64\times64\times64$  & 3.35e-16 & 1.11e-15 \\
				\hline
			\end{tabular}
		\end{table}
		
	}
\end{exa}

\begin{exa} {\em
		\label{WB_delta3D}
		({\bf{Pertubation to equilibrium state}}) In this example, we consider a small perturbation applied to the equilibrium state \eqref{eqstate3D}. The initial pressure is set as:
		\begin{equation*}
			p = p^e + \mu e^{-100(x^2 + y^2 + z^2)},
		\end{equation*}
		where $\mu = 10^{-3}$. The parameter $K$, $\rho_0$, and $G$ are the same as those in Example~\ref{WB3D}.
		\begin{figure}[htbp]
			\centering
			\begin{minipage}[b]{0.45\textwidth}
				\centering
				\includegraphics[scale=0.4]{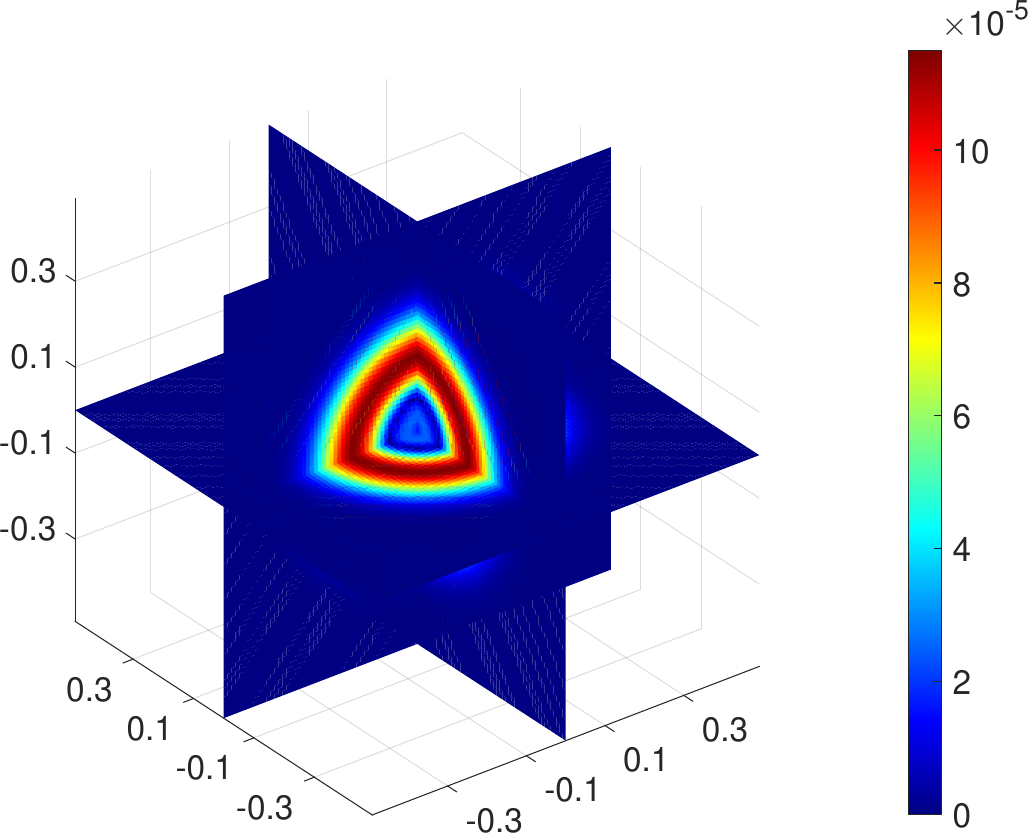}
				\caption*{(a) structure-preserving LDG scheme}
			\end{minipage}
			\hspace{0.05\textwidth} 
			\begin{minipage}[b]{0.45\textwidth}
			\centering
			\includegraphics[scale=0.4]{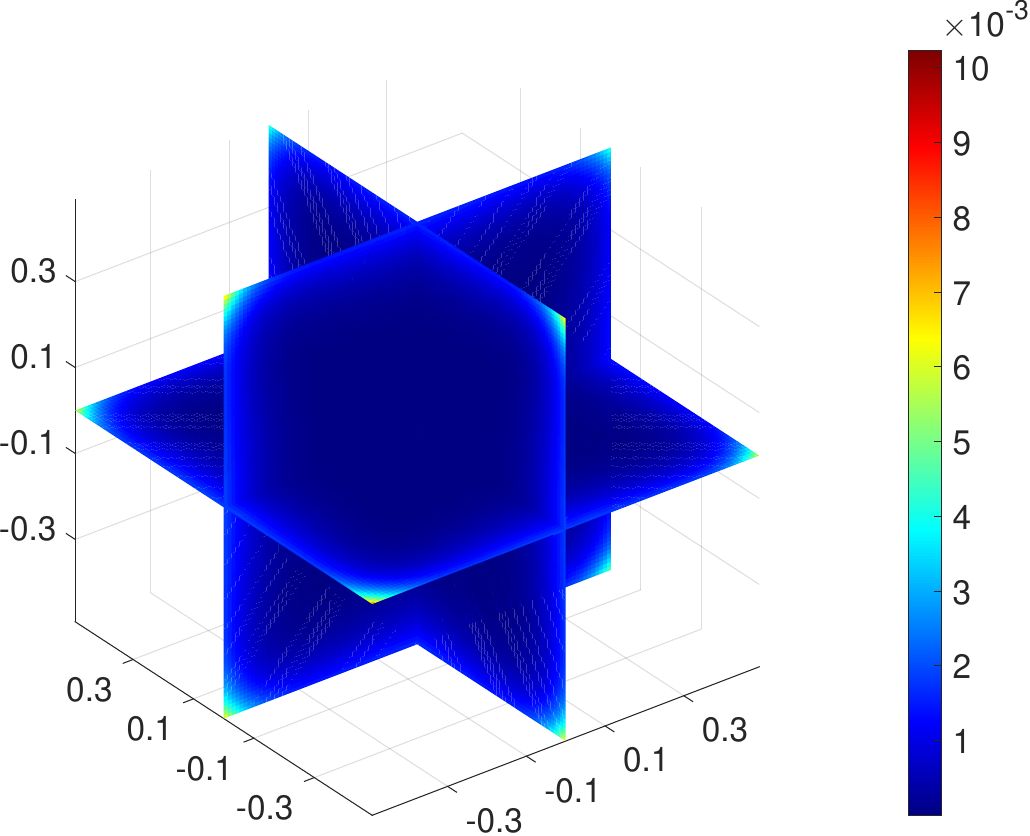}
			\caption*{(b) standard LDG scheme}
			\end{minipage}
			\caption{Example \ref{WB_delta3D}: The contour plots of the velocity magnitude by structure-preserving LDG scheme (a) and standard LDG scheme (b) at time $T = 0.1$ on a $60^3$ uniform mesh.}
			\label{fig:delta3D}
		\end{figure}
		The numerical results are illustrated in Figure~ \ref{fig:delta3D}. We can observe that our structure-preserving LDG scheme can hold the radial symmetry, while the standard LDG scheme can not.

	}
\end{exa}

\begin{exa} {\em
		\label{explosion3D}
		\begin{figure}[htbp]
			\centering
			\begin{minipage}[b]{0.45\textwidth}
				\centering
				\includegraphics[scale=0.4]{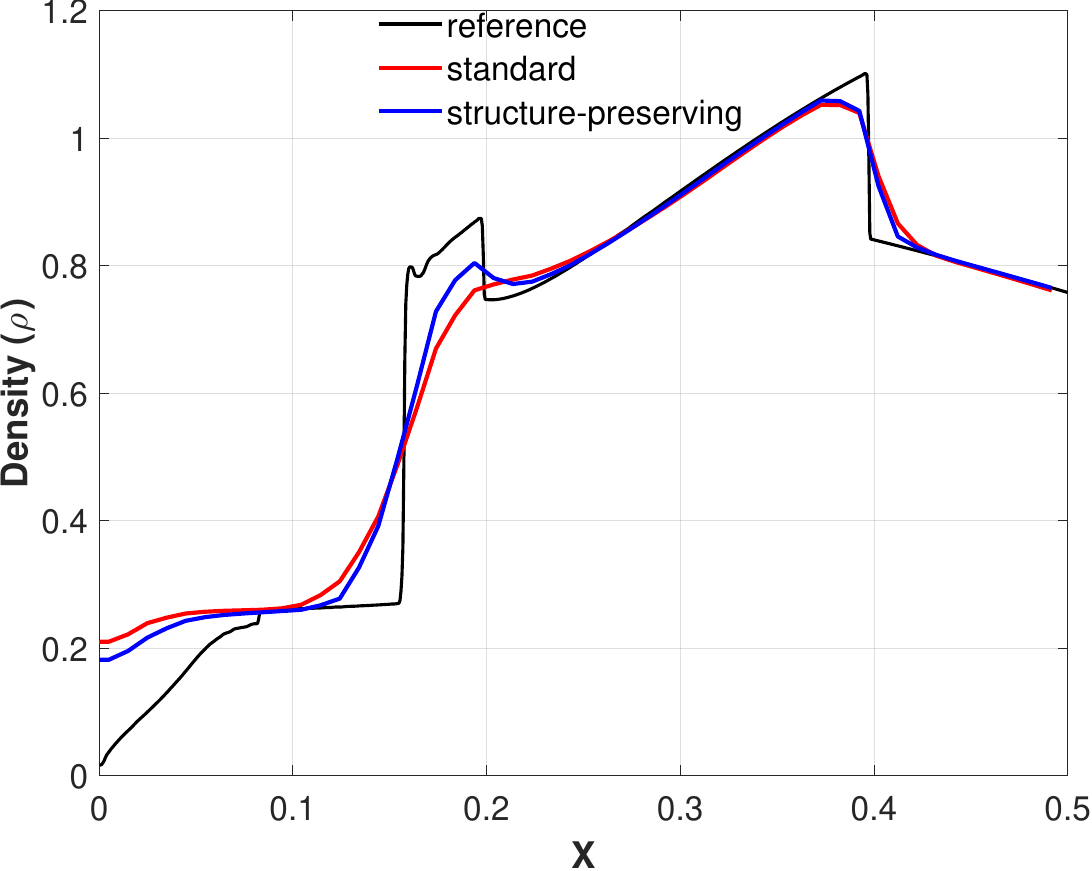}
				\caption*{(a) density $\rho$}
			\end{minipage}
			\hspace{0.05\textwidth} 
			\begin{minipage}[b]{0.45\textwidth}
				\centering
				\includegraphics[scale=0.4]{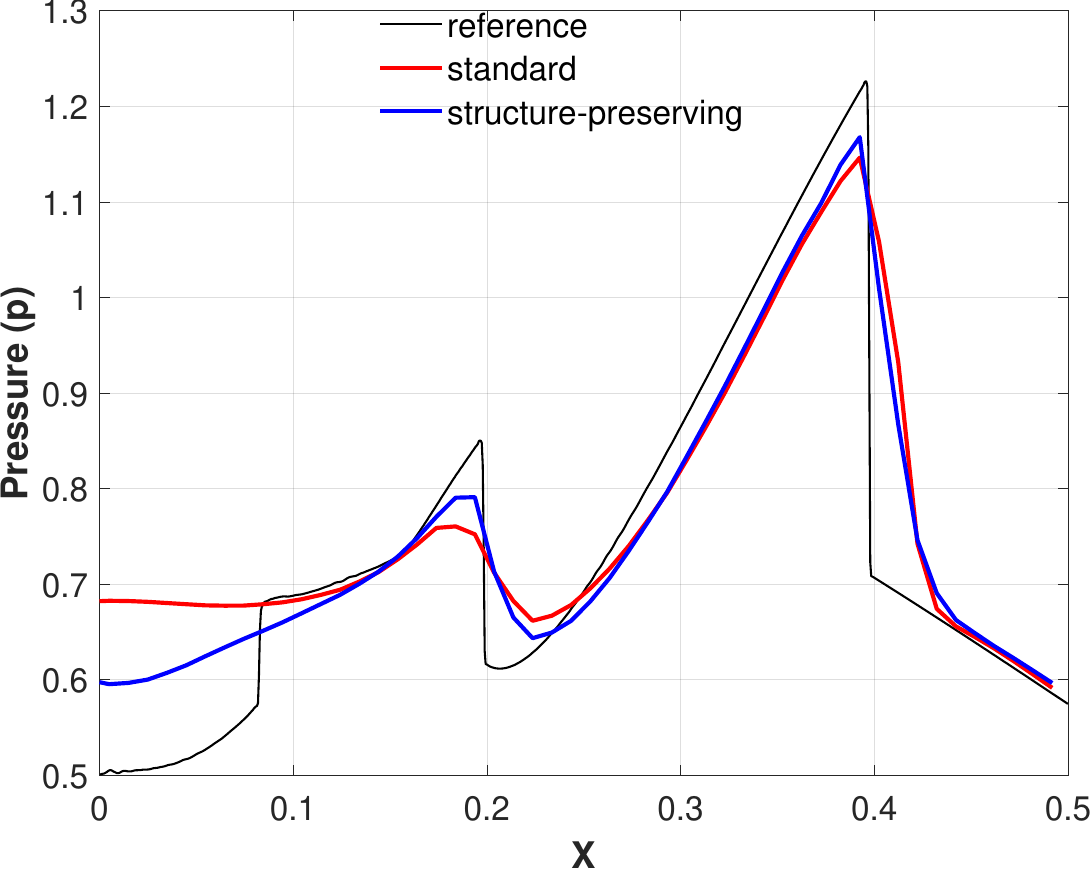}
				\caption*{(b) pressure $p$}
			\end{minipage}
			\caption{Example \ref{explosion3D}: The contour plots of the density $\rho$ (a) and the pressure $p$ (b) of the explosion by both standard and structure-preserving LDG scheme at time $T = 0.15$ on a $60^3$ uniform mesh along $y=z=0$.}
			\label{fig:explotion3D}
		\end{figure}
		\begin{figure}[htbp]
			\centering
			\begin{minipage}[b]{0.45\textwidth}
				\centering
				\includegraphics[scale=0.4]{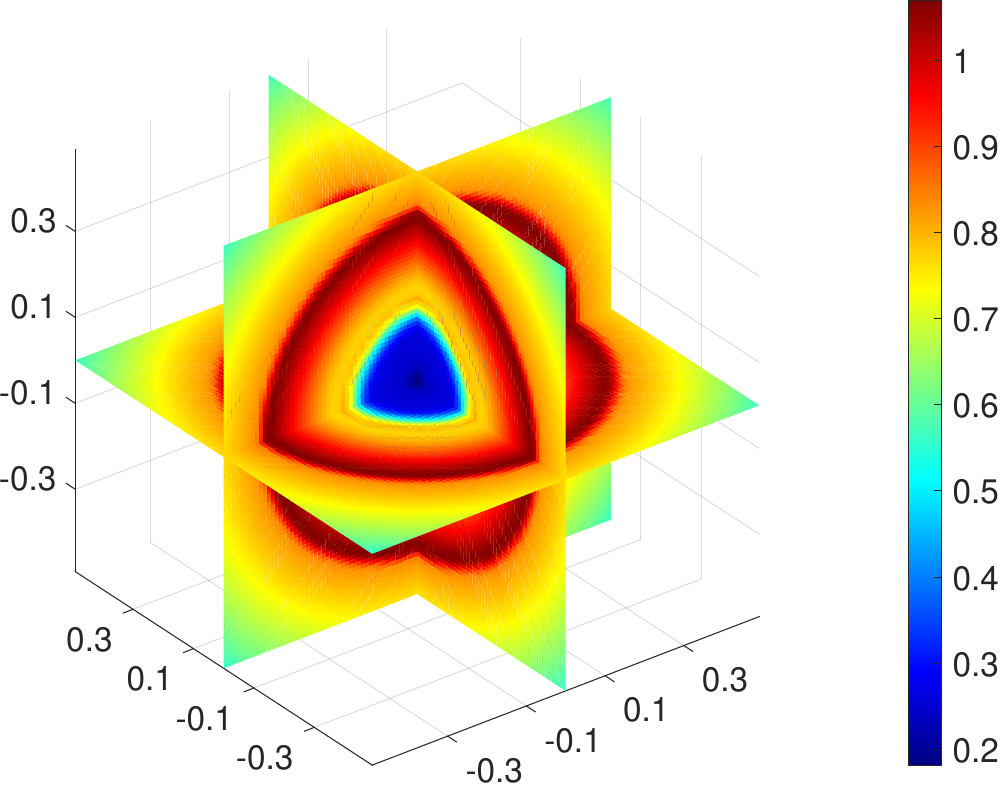}
				\caption*{(a) density $\rho$}
			\end{minipage}
			\hspace{0.05\textwidth} 
			\begin{minipage}[b]{0.45\textwidth}
				\centering
				\includegraphics[scale=0.4]{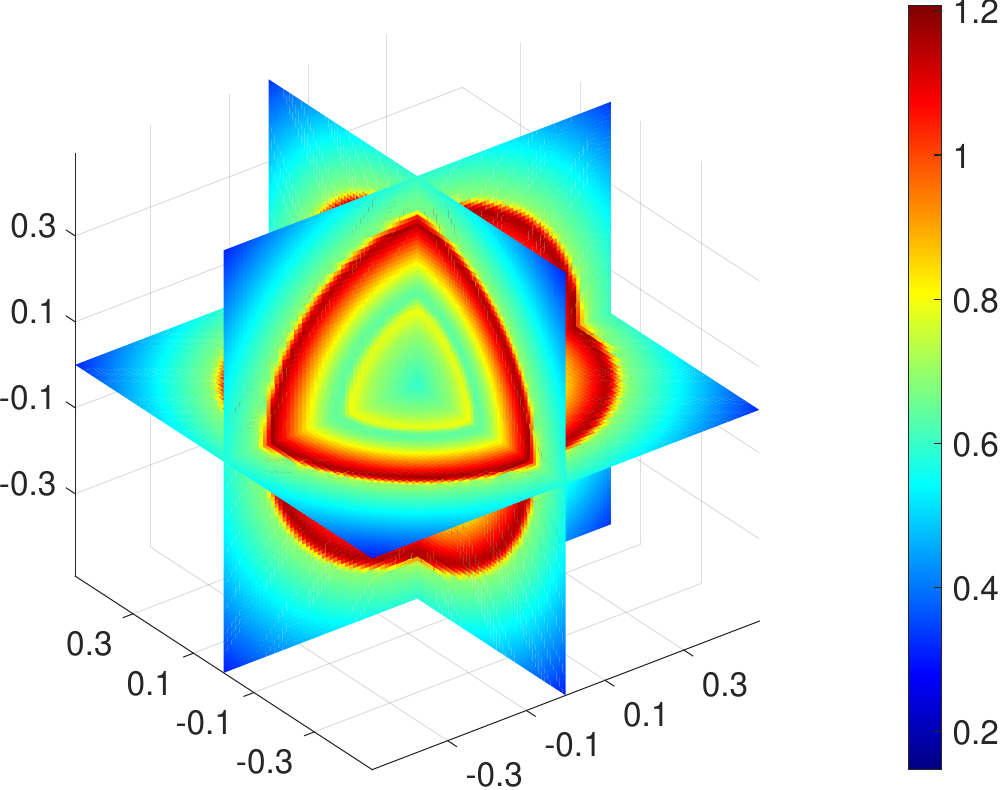}
				\caption*{(b) pressure $p$}
			\end{minipage}
			\caption{Example \ref{explosion3D}: The contour plots of the density $\rho$ (a) and the pressure $p$ (b) of the explosion by structure-preserving LDG scheme at time $T = 0.15$ on a $60^3$ uniform mesh.}
			\label{fig:explotion3D_slice}
		\end{figure}
		({\bf{Explosion}}) Furthermore, we consider an explosion at the center. We apply a jump around the center, similar in \cite{Zhang_2022,KAPPELI2014WB} to simulate the explosion:
		\begin{equation*}
			p_0(r) = \left\{
			\begin{aligned}
				&\alpha p^e(r), &{\rm for}& \; r < r_0, \\
				&p^e(r), &{\rm for}& \; r \geq r_0,
			\end{aligned} \right.
		\end{equation*}
		where we set $\alpha = 10$ and $r_0 = 0.1$.
		In this test, both OE technique and PP limiter is applied. In this numerical test, we set the boundary condition as transmissive boundary conditions. Besides, we set $K = \rho_0 = 1$ and $G = 1$.
		The numerical results on a $60^3$ uniform mesh at $T = 0.15$ are shown in Figure~\ref{fig:explotion3D} and Figure~\ref{fig:explotion3D_slice}. The reference solution is obtained by the solver in \cite{Zhang_2022} for SG Euler equations in spherical symmetry on a $800$ uniform mesh. The radial symmetric structure of density and pressure is preserved equally well, but our structure-preserving LDG scheme exhibits higher resolution in capturing shocks than the standard scheme.

	}
\end{exa}

\vspace{0.1cm}

\section{Conclusion}
\label{sec6}
\setcounter{equation}{0}
\setcounter{figure}{0}
\setcounter{table}{0}

In this paper, a high-order well-balanced and total-energy-conserving local discontinuous Galerkin scheme has been developed for the compressible self-gravitating Euler equations. High order accuracy in time and in space is obtained by strong-stability-preserving  Runge-Kutta methods and local discontinuous Galerkin framework, respectively. To address the difficulty from the time-dependent source term, we have split the gravitational potential to guarantee a correct preservation of steady states. With the energy conservative form, the total energy can be preserved and the scheme can easily extend to high order time discretizations. We have proven that our proposed scheme has well-balanced and total-energy-conserving properties. Extensive numerical results were provided to demonstrate the high order accuracy, shock-capturing ability, well-balanced property, and total-energy-conserving property. 

For future work, we aim to extend the current framework to unstructured meshes, enabling the simulation of more complex astrophysical phenomena. In addition, we plan to investigate the positivity-preserving property of the scheme to further enhance its robustness in the presence of strong discontinuities.




\section*{Acknowledgments}
We would like to thank Dr. Yulong Xing and Weijie Zhang for their helpful discussions and providing part of the data used in this work.

\bibliographystyle{abbrv}
\bibliography{refer}

\appendix
\section{Proof of Theorem \ref{thm1}}
\label{pro:WB}
\begin{proof}
	We proceed by mathematical induction. Based on initial condition for the explicitly known equilibrium state $ \mfu^0(\mfx)=\mathbf{0}$, $p^0(\mfx)=-\rho^0(\mfx)\nabla \phi^0$, $\Delta  \phi^0(\mfx) = 4\pi G\rho^0(\mfx)$, the initial condition for the structure-preserving LDG scheme is given by \eqref{S3:eq:31} and \eqref{S3:eq:32}. 	
	Suppose that at time $t^n$, the solution is equal to the discrete equilibrium state $\{\rho_h^e,\mfu_h^e = \mathbf{0},p_h^e,\phi_h^e\}$, i.e.
	\begin{equation}
		\rho_h^{n} = \rho_h^e,\; (\rho\mfu)_h^{n} = \mathbf{0},\;
		(E_{tot})_h^{n} = \dfrac{p_h^e}{\gamma-1} + \mfP(\dfrac{1}{2}\rho_h^e\phi_h^e).
	\end{equation}
	We then prove that 
	\begin{equation}
		\begin{split}
			&\rho_h^{n+1} = \rho_h^e,\; (\rho\mfu)_h^{n+1} = \mathbf{0},\; p_h^{n+1} = p_h^e,\;
			\phi_h^{n+1} = \phi_h^e,\\
			&E_h^{n+1} = \dfrac{p_h^e}{\gamma-1},\;
			(E_{tot})_h^{n+1} = \dfrac{p_h^e}{\gamma-1} + \mfP(\dfrac{1}{2}\rho_h^e\phi_h^e).
		\end{split}
	\end{equation}
	
	Following the for the next time level $t^{n+1}$, we have
	\begin{itemize}
		\item 
		First in Step 1, following the decomposition of the gravitational force and gravitational potential $(\mfg_h^n,\phi_h^n)$, we have that $(\bdp{\mfg_h^{\delta}}^n,\bdp{\phi_h^{\delta}}^n) = \mD_1(4\pi G(\rho^n_h - \rho_h^e))$. Since $\rho^n_h = \rho_h^e$, $(\bdp{\mfg_h^{\delta}}^n,\bdp{\phi_h^{\delta}}^n) = (\mathbf{0},0)$, it follows that $\phi_h^{n} = \phi_h^e$. We then have that
		\begin{equation}
			E_h^n = (E_{tot})_h^{n} - \mfP(\dfrac{1}{2}\rho_h^n\phi_h^n) = (E_{tot})_h^{n} - \mfP(\dfrac{1}{2}\rho_h^e\phi_h^e) = \dfrac{p_h^e}{\gamma-1}.
		\end{equation}
		Moreover, 
		\begin{equation}
			p_h^n = (\gamma-1)\bdp{E_h^n - \dfrac{1}{2}\dfrac{((\rho\mfu)_h^n)^2}{\rho_h^n}} = (\gamma-1)E_h^n = p_h^e.
		\end{equation}
        If $E_h^n = (E_{tot})_h^{n} - \dfrac{1}{2}\rho_h^n\phi_h^n$, the pressure $p_h^n\neq p_h^e$ and the WB property is destroyed.
		\item We notice that  $\mfU_h^{n,\text{int}(K)} = (\rho_h^{e,\text{int}(K)},\mathbf{0},p_h^{e,\text{int}(K)}/(\gamma-1))^T,\mfU_h^{n,\text{ext}(K)} = (\rho_h^{e,\text{ext}(K)},\mathbf{0},p_h^{e,\text{ext}(K)}/(\gamma-1))^T$. According to the modified HLLC flux in step 2 and lemma \ref{S2:L1}, we have that
		\begin{equation*}
			\widehat{\mfF}^n_{\mfn_{\mathcal{E}, K}} =\left(\widehat{f}^{n,[1]},\widehat{\bm{f}}^{n,[2],T},\widehat{f}^{n,[3]}\right)_{\mfn_{\mE, K}}^T = \bdp{0,p_h^{e,*}\mfn_{\mE, K}^T,0}^T.
		\end{equation*}
		In other words, $\widehat{f}^{n,[1]}_{\mfn_{\mE, K}} = 0$.
		\item In Step 3, from the Poisson equation for $(\dot{\mfg^n_h},\dot{\phi^n_h}) = \mD_2\bdp{4\pi G\nabla\cdot(\rho\mfu)^n_h} = \mD_2\bdp{\mathbf{0}}$, i.e.
		\begin{equation}
			\begin{split}
				\int_K\dot{\mfg^n_h}\cdot\mfw\dd{\mfx} &= \sum_{\mE\in \partial K}\int_{\mE}\widehat{\dot{\phi^n_h}}\mfw^{\text{int}(K)}\cdot\mfn_{\mE,K}\dd{s} - \int_{K}\dot{\phi^n_h}\nabla\cdot\mfw\dd{\mfx},\\
				\int_K\dot{\mfg^n_h}\cdot\nabla\psi\dd{\mfx} &= \int_{\mE}\psi^{\text{int}(K)}\widehat{\dot{\mfg^n_h}}\cdot\mfn_{\mE,K}\dd{s},
			\end{split}
		\end{equation}
		$(\dot{\mfg^n_h},\dot{\phi^n_h}) = (0,\mathbf{0})$ with the flux satisfying $\widehat{\dot{\phi^n_h}} = 0$, $\widehat{\dot{\mfg^n_h}} = \mathbf{0}$. Hence the energy flux $(\widehat{\mfF_g})^n_{\mfn_{\mathcal{E},K}} = 0$.
		\item Following the flowchart in the fully discrete version in step 4, we only discuss the momentum equation here, as the fluxes of mass equation and energy equation are both zero. For the momentum equation
		\begin{align*}
			&\qquad \sum_{q=1}^{Q}w_K^{(q)}(\rho\mfu)_h^{n+1}(\mfx_K^{(q)})v(\mfx_K^{(q)}) \\
		 	&= \sum_{q=1}^{Q}w_K^{(q)}(\rho\mfu)_h^{n}(\mfx_K^{(q)})v(\mfx_K^{(q)}) + \Delta t \left( \sum_{q=1}^{Q}w_K^{(q)}\mfF^{n,[2]}_h(\mfx_K^{(q)})\cdot\nabla v(\mfx_K^{(q)})\right. \\
			&\qquad  - \left.\sum_{\mE\in \partial K}\sum_{\mu=1}^{N}w_{\mE}^{(\mu)}\widehat{f}^{n,[2]}_{\mfn_{\mE,K}}(\mfx_{\mE}^{(\mu)})v^{\text{int}(K)}(\mfx_\mE^{(\mu)}) + \left<\mfS^{[2],n},v\right>_{K}\right) \\
			& = \sum_{q=1}^{Q}w_K^{(q)}(\rho\mfu)_h^{e}(\mfx_K^{(q)})v(\mfx_K^{(q)}) + \Delta t \left( \sum_{q=1}^{Q}w_K^{(q)}p_h^{e}(\mfx_K^{(q)})\nabla v(\mfx_K^{(q)})\right. \\
			&\qquad  - \left.\sum_{\mE\in \partial K}\sum_{\mu=1}^{N}w_{\mE}^{(\mu)}p_h^{e,*}(\mfx_{\mE}^{(\mu)})\mfn_{\mE, K}v^{\text{int}(K)}(\mfx_\mE^{(\mu)})\right.\\ 
			&\qquad + \sum_{q = 1}^{Q}w_{K}^{(q)} \left(\dfrac{\rho^n_h(\mfx_K^{(q)})}{\rho^e_h(\mfx_K^{(q)})} - \dfrac{\overline{(\rho^n_h)_K}}{\overline{(\rho^e_h)_K}}\right)\nabla p_h^e(\mfx_K^{(q)})v(\mfx_K^{(q)}) \\
			& \qquad + \dfrac{\overline{(\rho^n_h)_K}}{\overline{(\rho^e_h)_K}}\left(\sum_{\mE\in\partial K}\sum_{\mu = 1}^{N}w_{\mE}^{(\mu)}  p^e_h(\mfx_{\mE}^{(\mu)})v^{\text{int}(K)}\mfn_{\mE,K}(\mfx_{\mE}^{(\mu)})  - \sum_{q = 1}^{Q}w_K^{(q)} p_h^e(\mfx_K^{(q)})\nabla v(\mfx_K^{(q)})\right) \\
			&\qquad\left. + \sum_{q = 1}^{Q}-w_K^{(q)}\rho_h(x_K^{(q)})\mfg^{\delta}_h(\mfx_K^{(q)})v(\mfx_K^{(q)}) \right) \\
			& = \sum_{q=1}^{Q}w_K^{(q)}(\rho\mfu)_h^{e}(\mfx_K^{(q)})v(\mfx_K^{(q)}) + \mfI + \mfI\mfI + \mfI\mfI\mfI + \mfI\mfV,
		\end{align*}
		where 
		\begin{equation*}
			\mfI = \Delta t\left[ \sum_{q=1}^{Q}w_K^{(q)}p_h^{e}(\mfx_K^{(q)})\nabla v(\mfx_K^{(q)}) - \dfrac{\overline{(\rho^n_h)_K}}{\overline{(\rho^e_h)_K}}\sum_{q=1}^{Q}w_K^{(q)}p_h^{e}(\mfx_K^{(q)})\nabla v(\mfx_K^{(q)}) \right] = \mathbf{0},
		\end{equation*}
		\begin{equation*}
			\mfI\mfI = \Delta t\left[\sum_{q = 1}^{Q}w_{K}^{(q)} \left(\dfrac{\rho^n_h(\mfx_K^{(q)})}{\rho^e_h(\mfx_K^{(q)})} - \dfrac{\overline{(\rho^n_h)_K}}{\overline{(\rho^e_h)_K}}\right)\nabla p_h^e(\mfx_K^{(q)})v(\mfx_K^{(q)})\right] = \mathbf{0},
		\end{equation*}
		\begin{equation*}
			\mfI\mfI\mfI = \Delta t\left[- \sum_{\mE\in \partial K}\sum_{\mu=1}^{N}w_{\mE}^{(\mu)}p_h^{e,*}(\mfx_{\mE}^{(\mu)})\mfn_{\mE, K}v^{\text{int}(K)}(\mfx_\mE^{(\mu)}) + \dfrac{\overline{(\rho^n_h)_K}}{\overline{(\rho^e_h)_K}}\sum_{\mE\in \partial K}\sum_{\mu=1}^{N}w_{\mE}^{(\mu)}p_h^{e,*}(\mfx_{\mE}^{(\mu)})\mfn_{\mE, K}v^{\text{int}(K)}(\mfx_\mE^{(\mu)})\right] = \mathbf{0},
		\end{equation*}
		\begin{equation*}
			\mfI\mfV = \Delta t\left[- \sum_{q = 1}^{Q}w_K^{(q)}\rho_h(x_K^{(q)})\mfg^{\delta}_h(\mfx_K^{(q)})v(\mfx_K^{(q)}) \right] = \mathbf{0}.
		\end{equation*}
		Therefore, we have that
		\begin{equation*}
			(\rho\mfu)_h^{n+1} = (\rho\mfu)_h^{n} = \mathbf{0},
		\end{equation*}
		In conclusion, we summarize that
		\begin{equation}
			\rho_h^{n+1}= \rho_h^e,\; (\rho\mfu)_h^{n+1} = \mathbf{0},\; 
			(E_{tot})_h^{n+1} = \dfrac{p_h^e}{\gamma-1} + \mfP(\dfrac{1}{2}\rho_h^e\phi_h^e).
		\end{equation}
		Namely, the WB property is preserved at the next time level $t^{n+1}$. The proof is completed.
	\end{itemize}
	
\end{proof}

%

\end{document}